\newcommand\codim{\text{codim}}
\newcommand{\Pic}{\operatorname{Pic}\nolimits}
\newcommand\Span{\text{Span}}
\renewcommand{\phi}{\varphi}
\newtheorem{theorem}{Theorem}[section]
\newtheorem{proposition}[theorem]{Proposition}
\newtheorem{lemma}[theorem]{Lemma}
\newtheorem{corollary}[theorem]{Corollary}
\newtheorem{sub}[subsection]{}
\theoremstyle{definition}
\newtheorem{definition}[theorem]{Definition}
\newtheorem{example}[theorem]{Example}
\newtheorem{proposition-definition}[theorem]{Proposition-Definition}
\newtheorem{remark}[theorem]{Remark}
\begin{document}

\begin{flushright}
To the memory of Andrey Todorov
\end{flushright}

\title{Linear ind-Grassmannians}

\author[I.~Penkov]{\;Ivan~Penkov}

\vspace{1cm}

\address{
Jacobs University Bremen \\
School of Engineering and Science, Campus Ring 1, 28759 Bremen, Germany}
\email{i.penkov@jacobs-university.de}

\author[A.~Tikhomirov]{\;Alexander S.~Tikhomirov}

\address{
Department of Mathematics\\
State Pedagogical University\\
Respublikanskaya Str. 108
\newline 150 000 Yaroslavl, Russia}
\email{astikhomirov@mail.ru}

\maketitle

\thispagestyle{empty}

\begin{abstract}
We consider ind-varieties obtained as direct limits of chains of embeddings
$X_1\stackrel{\phi_1}{\hookrightarrow}\dots\stackrel{\phi_{m-1}}{\hookrightarrow}
X_m\stackrel{\phi_m}{\hookrightarrow}X_{m+1}\stackrel{\phi_{m+1}}{\hookrightarrow}\dots$,
where each $X_m$ is a Grassmannian or an isotropic Grassmannian (possibly mixing Grassmannians and isotropic
Grassmannians), and the embeddings $\phi_m$ are linear in the sense that they induce
isomorphisms of Picard groups. We prove that any such ind-variety is isomorphic to one of certain standard
ind-Grassmannians and that the latter are pairwise non-isomorphic ind-varieties.

2010 Mathematics Subject Classification, Primary 14A10, 14M15.

Bibliography: 8 items.

\keywords{Keywords: Grassmannian, ind-variety, linear morphism of algebraic varieties.}

\thanks{}
\end{abstract}

\section{Introduction}\label{sec1}
\label{Introduction}
The Barth--Van de Ven--Tyurin--Sato Theorem claims that any finite rank vector bundle on the infinite complex
projective space $\mathbf{P}^\infty$ is isomorphic to a direct sum  of line bundles. For rank two bundles this theorem
has been proved by Barth and Van de Ven in \cite{BV}, and in the general case the theorem has been proved by Tyurin in
\cite{T} and Sato in [S1]. In the last decade we have studied more general ind-varieties for which the result
holds true \cite{PT1}, \cite{PT2} \cite{DP}.

This study has naturally led us to the problem of constructing non-isomorphic ind-varieties arising as direct limits
of given classes of embeddings of projective varieties. In the present
note we address a classification problem along those lines:
we consider linear embeddings of Grassmannians, i.e.
embeddings $i:X_1\hookrightarrow X_2$ of a Grassmannian $X_1$ into a Grassmannian $X_2$ satisfying the condition
$i^*\mathcal{O}_{X_2}(1)\simeq\mathcal{O}_{X_1}(1)$, and determine
how many non-isomorphic ind-varieties can be obtained from such embeddings. Moreover,
we consider also orthogonal and symplectic Grassmannians (i.e. isotropic Grassmannians arising from non-degenerate
orthogonal or
symplectic forms) and define a \textit{linear ind-Grassmannian} as an ind-variety arising as the direct limit
$\underset{\longrightarrow}\lim\ X_n$ of any chain of linear embeddings
$$
X_1\hookrightarrow X_2\hookrightarrow\dots\hookrightarrow X_m \hookrightarrow X_{m+1}\hookrightarrow\dots
$$
of Grassmannians, some or all of them orthogonal or symplectic.

Our main result (Theorem 2, see Section \ref{sec5}) states that each linear ind-Grassmannian is isomorphic (as an
ind-variety) to one of the standard ind-Grassmannians introduced in \cite{DiP}. In particular, any linear
ind-Grassmannian is a homogeneous space
of one of the three classical ind-groups $SL(\infty)$, $O(\infty)$, $Sp(\infty)$. We also prove in Theorem 2 that the
standard ind-Grassmannians are pairwise non-isomorphic. To make the note self-contained, we do not rely on the
article \cite{DiP}, but introduce (in Section \ref{stand Gr} below) the standard ind-Grassmannians in terms of
explicit chains of embeddings.

The main tool we use in Theorem 2 is Theorem 1 (see Section \ref{Lin emb}) which describes linear morphisms of
Grassmannians, as well as isotropic Grassmannians.

In the related paper \cite{PT3} we return to the original question of extending the generality of the Barth-Van de Ven-
Tyurin-Sato theorem. There we give the list of linear ind-Grassmannians on which a bundle of finite rank is isomorphic
to a direct sum of line bundles.

\textbf{Acknowledgement.} I.P. acknowledges the hospitality of the Max-Planck-Institute for Mathematics in Bonn, as well
as partial DFG support through "DFG Schwerpunkt 1388, Darstellungstheorie". A.S.T. acknowledges partial support from
Jacobs University Bremen.

\vspace{1cm}

\section{Preliminaries}\label{sec2}
\vspace{0.5cm}

\begin{sub}{\bf Notation and conventions.}\label{subsection 2.1}
\rm
Recall that $\mathbb{N}=\{0,1,2,...\}$. We set $\mathbb{Z}_+=\{1,2,3,...\}$. All vector spaces and algebraic varieties
are defined over an algebraically closed field $\mathbb{F}$ of characteristic 0.
The superscript ${}^*$ indicates dual space or dual vector bundle as well as inverse image. If $X$ is a projective
variety with Picard group isomorphic to $\mathbb{Z}$, then $\mathcal{O}_X(1)$ stands for the ample generator of the
Picard group.

By $G(k,V),\ 1\le k\le\dim V,$ we denote the Grassmannian of $k$-dimensional subspaces of a finite-dimensional vector
space $V$. For $k=1$, $G(k,V)=\mathbb{P}(V)$. Furthermore, $\mathcal{O}_{G(k,V)}(1)\cong\wedge^kS_{G(k,V)}^*$, where
$S_{G(k,V)}$ is the tautological bundle on $G(k,V)$,  and $\Pic G(k,V)\cong\mathbb{Z}\mathcal{O}_{G(k,V)}(1)$.

In what follows we will consider, both symmetric and symplectic, quadratic forms $\Phi$ on $V$.
Under the assumption that $\Phi$ is fixed, we set $W^\bot:=\{v\in V|\ \Phi(v,w)=0$ for any
$w\in W\}$ for any subspace $W\subset V$. Recall
that $W$ is {\it isotropic} (or $\Phi$-\textit{isotropic}) if $W\subset W^\bot$.
\end{sub}

\begin{sub}{\bf Linear morphisms.}\label{subsection 2.2}
\rm

\begin{definition}\label{lin mor}
We call a morphism $\phi: X\to Y$ of algebraic varieties (or ind-varieties) \textit{linear} if $\phi$ induces an
epimorphism of Picard groups $\phi^*:\Pic Y\to\Pic X$.
\end{definition}

In this paper we focus on linear embeddings $\phi: X\to Y$ of Grassmannians or isotropic Grassmannians. In this case
$\phi$ is linear iff $\phi^*\mathcal{O}_Y(1)\cong\mathcal{O}_X(1)$.
By a \textit{projective space on}, or \textit{in}, a variety (or ind-variety) $X$ we
understand a linearly embedded subvariety $Y$ of $X$ isomorphic to a projective space.
Note that the Pl\"ucker embedding $G(k,V)\hookrightarrow\mathbb{P}(H^0(\mathcal{O}_{G(k,V)}(1))^*)$ is a linear
morphism.

By a \textit{quadric on} $X$ of dimension $m\ge3$ we understand a linearly embedded subvariety
$Y$ of $X$ isomorphic to a smooth $m$-dimensional quadric. By a quadric on $X$ of dimension 2
we understand the image of an embedding
$i:\mathbb{P}^1\times\mathbb{P}^1\hookrightarrow X$ such that $i^*\mathcal{O}_X(1)\simeq
\mathcal{O}_{\mathbb{P}^1}(1)\boxtimes\mathcal{O}_{\mathbb{P}^1}(1)$.
By a quadric on $X$ of dimension 1, or a \textit{conic} on $X$,
we understand the image of an embedding $i:\mathbb{P}^1\hookrightarrow X$ such that
$i^*\mathcal{O}_X(1)\simeq\mathcal{O}_{\mathbb{P}^1}(2)$. Given a quadric $Q$, we set $\mathbb{P}_Q=\mathbb{P}
(H^0(\mathcal{O}_Q(1))^*)$ for $m\ge3$, and respectively
$\mathbb{P}_Q=\mathbb{P}(H^0(\mathcal{O}_{\mathbb{P}^1}(1)\boxtimes\mathcal{O}_{\mathbb{P}^1}(1))^*)$,
$\mathbb{P}_Q=\mathbb{P}(H^0(\mathcal{O}_{Q}(2))^*)$ for $m=2,1$. Then $Q$ is canonically embedded into $\mathbb{P}_Q$.
\end{sub}

\begin{sub}{\bf Orthogonal Grassmannians.}\label{linear ortho ind-Grassm}
\rm

Let $\Phi\in S^2V^*$ be a non-degenerate symmetric form on $V$.
For $\dim V\ge3$ and  $1\le k\le[\frac{\dim V}{2}]$, the \textit{orthogonal Grassmannian} $GO(k,V)$ is defined as the
subvariety of $G(k,V)$ consisting of $\Phi$-isotropic $k$-dimensional subspaces of $V$.
Unless $\dim V=2n$, $k=n$, $GO(k,V)$ is a smooth irreducible variety.
For $\dim V=2n$, $k=n$, $GO(k,V)$ is smooth and has two irreducible components, both of which are isomorphic to
$GO(n-1,V')$ where $\dim V'=2n-1$.

The orthogonal Grassmannian $GO(k,V)$ has the following dimension:
$$
\dim GO(k,V)=\Bigl\{
\begin{array}{lll}
2kn-\frac{1}{2}k(3k+1) & \textrm{for}\ \ 1\le k\le n, & \dim V=2n, \\
k(2n+1)-\frac{1}{2}k(3k+1) & \textrm{for}\ \ 1\le k\le n, & \dim V=2n+1.
\end{array}
$$
Moreover, for any $V$ and $1\le k\le[\frac{\dim V}{2}]$, $k\ne\frac{\dim V}{2}-1$,
$$
\Pic GO(k,V)=\mathbb{Z}\mathcal{O}_{GO(k,V)}(1),
$$
where the sheaf $\mathcal{O}_{GO(k,V)}(1)$ posesses the following property: if $t: GO(k,V)\hookrightarrow G(k,V)$
is the tautological embedding, then
$$
t^*\mathcal{O}_{G(k,V)}(1)\cong\Bigl\{
\begin{array}{lll}
\mathcal{O}_{GO(k,V)}(1) & for & 1\le k\le [\frac{\dim V}{2}]-1, \\
\mathcal{O}_{GO(k,V)}(2) &  for & k=[\frac{\dim V}{2}].
\end{array}
$$
In what follows we will think of $GO(n-1,V)$ for $\dim V=2n$ as a variety of isotropic flags rather than as an
orthogonal Grassmannian. In addition, we exclude the case $\dim V=2n,\ k=n$ from consideration.
More precisely, when writing $GO(k,V)$ below
we assume that $\dim V\ge7$ and $k\ne\frac{\dim V}{2}$, $k\ne\frac{\dim V}{2}-1$.

For $k<n=[\frac{\dim V}{2}]$ on $GO(k,V)$ there is a single family of maximal projective spaces of dimension $k$
with base $PO_{\alpha}(k,V)$. There is also a family of $(\dim V-2k)$-dimensional maximal quadrics not contained
in projective spaces on $GO(k,V)$. We denote the base of this family by $QO_{\beta}(k,V)$.
In addition, for $k\le[\frac{\dim V}{2}]-2$ there is a family of 4-dimensional maximal quadrics not contained
in projective spaces on $GO(k,V)$. We denote the base of this family by $QO_{\gamma}(k,V)$.

For $k=n$ on $GO(k,V)$ there is a single family of maximal projective spaces of
dimension $[\frac{\dim V-1}{2}]$ with irreducible base $PO_{\alpha}(k,V)$. Furthermore, if $\dim V=2n+1$ and
$1\le k\le n$, on $GO(k,V)$ there is a single family of maximal projective spaces of dimension $n-k$ with irreducible
base $PO_{\beta}(k,V)$.
The varieties $PO_{\alpha}(k,V)$, $PO_{\beta}(k,V)$, $QO_{\beta}(k,V)$ and $QO_{\gamma}(k,V)$ are described by
the following lemma.

\begin{lemma}\label{Bkn}
(i) If $1\le k\le n-1$, then each $k$-dimensional projective space on $GO(k,V)$ is of the form
\begin{equation}\label{PkI}
\{V_k\in GO(k,V)|\ V_k\subset V_{k+1}\}\simeq\mathbb{P}(V_{k+1}^*)
\end{equation}
for a fixed $(k+1)$-dimensional isotropic subspace $V_{k+1}$. Consequently, for $k\ne\frac{\dim V}{2}-2$,
$PO_{\alpha}(k,V)$ is isomorphic to $GO(k+1,V)$.

(ii) If $1\le k\le n-1$, then $PO_{\beta}(k,V)$ is isomorphic to the variety of isotropic $(k-1,n)$-flags in $V$,
and for any point $(V_{k-1}\subset V_n)\in PO_{\beta}(k,V)$ the corresponding projective space on $GO(k,V)$ is
\begin{equation}\label{Pn-k}
\{V_k\in GO(k,V)|\ V_{k-1}\subset V_k\subset V_n\}\simeq\mathbb{P}(V_n/V_{k-1}).
\end{equation}

(iii) If $k=n$, then $PO_{\alpha}(n,V)$ is isomorphic to $GO(n,V)$,
and for any point $V_n\in GO(n,V)$ the corresponding projective space on $GO(n,V)$ is
\begin{equation}\label{PnI}
\{V'_n\in GO(n,V)|\ \dim(V'_n\cap V_n)=n-1\}\simeq\mathbb{P}(V/V_n).
\end{equation}

(iv) If $1\le k\le n$, then $QO_{\beta}(k,V)$ is isomorphic to $GO(k-1,V)$, and for any point
$V_{k-1}\in GO(k-1,V)$ the corresponding quadric on $GO(k,V)$ is
\begin{equation}\label{Q}
\{V_k\in GO(k,V)|V_k\supset V_{k-1}\}\simeq GO(1,V_{k-1}^\bot/V_{k-1}).
\end{equation}

(v) $QO_{\gamma}(k,V)$ is isomorphic to the variety of isotropic $(k-2,k+2)$-flags in $V$, and for any point
$(V_{k-2}\subset V_{k+2})\in QO_{\gamma}(k,V)$ the corresponding quadric on $GO(k,V)$ is
\begin{equation}\label{Q1}
\{V_k\in GO(k,V)|V_{k-2}\subset V_k\subset V_{k+2}\}.
\end{equation}

(vi) Any maximal quadric on $GO(k,V)$ is either of the form (\ref{Q}) or (\ref{Q1}), or lies in a projective space on
$GO(k,V)$.
\end{lemma}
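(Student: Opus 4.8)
The plan is to reduce every assertion to the geometry of the ambient Grassmannian $G(k,V)$ via the tautological embedding $t\colon GO(k,V)\hookrightarrow G(k,V)$. Since $t^*\mathcal{O}_{G(k,V)}(1)\cong\mathcal{O}_{GO(k,V)}(1)$ for $1\le k\le n-1$, a projective space (resp.\ a quadric) on $GO(k,V)$ is precisely a projective space (resp.\ a quadric) on $G(k,V)$ that happens to lie inside $GO(k,V)$. I would therefore start from the classical description of linear subvarieties of $G(k,V)$: every projective space is contained in a maximal one, the maximal projective spaces being the two families $\Sigma_{V_{k-1}}=\{V_k\mid V_{k-1}\subset V_k\}\cong\mathbb{P}(V/V_{k-1})$ (type A) and $\Sigma^{V_{k+1}}=\{V_k\mid V_k\subset V_{k+1}\}\cong\mathbb{P}(V_{k+1}^*)$ (type B); and every quadric of dimension $\ge2$ on $G(k,V)$ lies in a type-A or type-B projective space or in a sub-Grassmannian $\{V_k\mid V_{k-2}\subset V_k\subset V_{k+2}\}\cong G(2,V_{k+2}/V_{k-2})\cong G(2,4)$. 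These facts are the $SL(\infty)$-analogues and I treat them as available; the whole proof is then the intersection of each configuration with the isotropy condition.

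For the projective spaces (parts (i)--(iii)) I would argue as follows. For a type-B space, $\Sigma^{V_{k+1}}\cap GO(k,V)=\{V_k\subset V_{k+1}\mid V_k\ \Phi\text{-isotropic}\}$; a short computation with $\Phi|_{V_{k+1}}$ shows this is all of $\mathbb{P}(V_{k+1}^*)$ when $V_{k+1}$ is isotropic and at most two points otherwise (an isotropic hyperplane of $V_{k+1}$ can exist only if $\mathrm{rk}\,\Phi|_{V_{k+1}}\le2$). Hence the maximal $k$-dimensional projective spaces on $GO(k,V)$ are exactly the $\mathbb{P}(V_{k+1}^*)$ with $V_{k+1}$ isotropic, which is (\ref{PkI}) and gives $PO_\alpha(k,V)\cong GO(k+1,V)$. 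For a type-A space, writing $V_k=V_{k-1}+\langle v\rangle$ one checks that $V_k$ is isotropic iff $V_{k-1}$ is isotropic and the class of $v$ lies on the quadric cut out by the nondegenerate induced form on $V_{k-1}^\bot/V_{k-1}$; thus $\Sigma_{V_{k-1}}\cap GO(k,V)=GO(1,V_{k-1}^\bot/V_{k-1})$, a quadric of dimension $\dim V-2k$, whose maximal linear subspaces are the $\mathbb{P}(V_n/V_{k-1})$ attached to maximal isotropic $V_n\supset V_{k-1}$. This yields (\ref{Pn-k}) and identifies $PO_\beta(k,V)$ with the variety of isotropic $(k-1,n)$-flags. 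The case $k=n$ (part (iii)), allowed only for $\dim V=2n+1$, does not submit to this reduction, since there $t^*\mathcal{O}_{G(n,V)}(1)\cong\mathcal{O}_{GO(n,V)}(2)$ and a line on $GO(n,V)$ becomes a conic on $G(n,V)$; for it I would instead use the homogeneous (spinor) structure of $GO(n,V)$ directly to obtain the maximal projective spaces (\ref{PnI}) and $PO_\alpha(n,V)\cong GO(n,V)$.

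The quadrics (parts (iv)--(vi)) are then mostly produced by the same two sources. The type-A intersection is the quadric (\ref{Q}) with parameter $V_{k-1}$, whence $QO_\beta(k,V)\cong GO(k-1,V)$, giving (iv); and a sub-Grassmannian $G(2,V_{k+2}/V_{k-2})$ with $V_{k+2}$ isotropic lies entirely in $GO(k,V)$ and is the $4$-dimensional quadric (\ref{Q1}), whose base is the variety of isotropic $(k-2,k+2)$-flags, giving (v). For the completeness statement (vi) I would run the case analysis forced by the ambient classification: a quadric $Q$ of dimension $\ge2$ on $GO(k,V)$ lies, as a quadric on $G(k,V)$, in a type-A space, a type-B space, or a $G(2,4)$. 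In the type-B case the finiteness computation above forces $V_{k+1}$ isotropic, so $Q$ lies in the projective space $\mathbb{P}(V_{k+1}^*)$ on $GO(k,V)$; in the type-A case $Q\subseteq GO(1,V_{k-1}^\bot/V_{k-1})$, hence equals (\ref{Q}) if maximal; and in the $G(2,4)$ case the induced form on $V_{k+2}/V_{k-2}$ must vanish on a positive-dimensional family of isotropic $2$-planes, which by the same rank bound in dimension $4$ forces $V_{k+2}$ isotropic, so $Q\subseteq$ (\ref{Q1}) and equals it if maximal, the remaining degenerate cases lying in projective spaces on $GO(k,V)$.

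The linear algebra of the restricted and induced forms is routine; the two delicate points are where I expect the real difficulty. First, the completeness in (vi) and the ``single family'' assertions rest essentially on the ambient classification of quadrics on $G(k,V)$ --- that every quadric of dimension $\ge2$ lies in a projective space or a $G(2,4)$ --- which is not proved in what precedes and must either be cited as the $SL$-analogue or established separately; this is the main obstacle. Second, the maximal case $k=n$ must be handled by spinor geometry rather than the pull-back reduction, because the polarization doubles; and for $\dim V=2n$ one must additionally track the two families of maximal isotropic subspaces of $V_{k-1}^\bot/V_{k-1}$ when identifying $PO_\beta(k,V)$, which is the principal bookkeeping subtlety in part (ii).
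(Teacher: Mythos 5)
Your proposal takes a genuinely different route, so let me compare. For parts (i)--(v) the paper offers no argument at all (they are explicitly left to the reader), and your reduction --- intersecting the two families of maximal linear subspaces of $G(k,V)$ with the isotropy locus, using $t^*\mathcal{O}_{G(k,V)}(1)\cong\mathcal{O}_{GO(k,V)}(1)$ for $k\le[\frac{\dim V}{2}]-1$ --- is a natural way to supply them; your observation that (iii) cannot be handled this way because the polarization doubles (lines on $GO(n,V)$ become conics in $G(n,V)$) is correct, and your worry about two rulings in (ii) is moot, since the paper defines $PO_\beta(k,V)$ only for $\dim V$ odd. The substantive divergence is in (vi). You deduce it from a classification of quadrics on the ambient $G(k,V)$ (every linearly embedded quadric of dimension $\ge2$ lies in a maximal linear subspace or in a sub-Grassmannian $\{V_k\mid V_{k-2}\subset V_k\subset V_{k+2}\}\cong G(2,4)$). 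The paper instead argues intrinsically: for a quadric $Q$ on $GO(k,V)$ it considers the dense family $U$ of conics $C$ cut on $Q$ by planes of $\mathbb{P}_Q$, and for each conic the swept locus $K_C=\underset{V_k\in C}{\cup}\mathbb{P}(V_k)\subset\mathbb{P}(V)$ with its projection $p_C$ from the incidence variety. The trichotomy --- $K_C$ a quadratic cone with vertex $\mathbb{P}(V_{k-1}(C))$, a quadratic cone with vertex $\mathbb{P}(V_{k-2}(C))$, or $K_C=\mathbb{P}(V_{k+1}(C))$ with $p_C$ a double cover --- together with a density argument showing the relevant subspace is independent of $C$ yields exactly your three outcomes: $Q\subset GO(1,V_{k-1}^\bot/V_{k-1})$, $Q$ inside a quadric (\ref{Q1}), or $Q\subset\mathbb{P}(V_{k+1}^*)$.

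The gap in your version is the one you name yourself, and it is genuine: the ambient classification of quadrics on $G(k,V)$ is established nowhere in the paper and not by you, so as written (vi) rests on an unproved input. The point of the paper's maneuver is precisely to avoid needing it: classifying conics on a Grassmannian --- analyzing the flag $\cap V_k\subset\Span(\cup V_k)$ attached to a degree-two rational curve --- is elementary linear algebra, and the passage from conics to the whole quadric is done by irreducibility/density of the family of plane sections, not by quoting a classification of quadric surfaces and threefolds. So a self-contained write-up should either prove your ambient statement (most likely by the very same conic-sweeping argument, so nothing is saved) or adopt the paper's argument. One further caution on (i): your conclusion that the maximal $k$-dimensional projective spaces are exactly the $\mathbb{P}(V_{k+1}^*)$ requires comparing $k$ with $n-k$. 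When $2k\le n$ the quadric $GO(1,V_{k-1}^\bot/V_{k-1})$ arising from your type-A branch contains $k$-dimensional linear subspaces $\{V_k\mid V_{k-1}\subset V_k\subset V_{2k}\}$ for isotropic $V_{2k}$, which are not of the form (\ref{PkI}); this boundary regime must be excluded or treated separately (it is glossed over in the literal statement of (i) as well), and it is exactly the kind of case distinction your ``Hence'' currently skips.
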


\begin{proof}
We leave the proof of (i)-(v) to the reader and give an outline of the proof of (vi).
Let $Q$ be a quadric on $GO(k,V)$ and let $G$ be the variety of projective planes in $\mathbb{P}_Q$.
In $G$ there is a dense open subset $U=\{\mathbb{P}^2\in G\ |\ \mathbb{P}^2\cap Q$ is a conic$\}$, and if
$\mathbb{P}^2\cap Q=C$ then $\mathbb{P}_C=\mathbb{P}^2\in U$.
In what follows, by a slight abuse of notation, we will indicate this latter fact by writing $C\in U$.

Let $F$ be the variety of $(1,k)$-isotropic flags in $V$
with projections $\mathbb{P}(V)\overset{pr_1}\leftarrow F\overset{pr_2}\to GO(k,V)$. For any $C\in U$ set
$\tilde{K}_C:=pr_2^{-1}(C)$, $K_C:=pr_1(\tilde{K}_C)$ and let $p_C:=pr_1|_{\tilde{K}_C}:\ \tilde{K}_C\to K_C$ be the
projection. There are three possibilities:

(a) there exists a dense open subset $U'$ in $U$ such that, for any $C\in U'$, $p_C$ is an isomorphism and $K_C$ is a
quadratic cone with vertex $S=\mathbb{P}(V_{k-1}(C))$ for some subspace $V_{k-1}(C)$ in $V$,

(b) there exists a dense open subset $U'$ in $U$ such that, for any $C\in U'$, $p_C$ is an isomorphism and $K_C$ is a
quadratic cone with vertex $S=\mathbb{P}(V_{k-2}(C))$ for some subspace $V_{k-2}(C)$ in $V$,

(c) for any $C\in U$, $p_C$ is a double covering and $K_C=\mathbb{P}(V_{k+1}(C))$ for some subspace $V_{k+1}(C)$ of $V$.

Using the fact that $U$ and $U'$ are dense subsets in $G$, one easily checks the following facts.
In case (a) the space $V_{k-1}=V_{k-1}(C)$ does not depend on the conic $C\in U'$ and
$Q\subset GO(1,V_{k-1}^\bot/V_{k-1})\in QO_{\beta}(k,V)$. In case (b)  the space $V_{k-2}=V_{k-2}(C)$ does not depend on
the conic $C\in U'$ and $Q$ is contained in a quadric $\bar{Q}$ given by formula (\ref{Q1}), i.e
$\bar{Q}\in QO_{\gamma}(k,V)$. In case (c) the space $V_{k+1}=V_{k+1}(C)$ does not depend on the conic $C\in U$, so that
$Q\subset\mathbb{P}(V_{k+1}^*)\subset GO(k,V)$.
\end{proof}

In what follows we will sometimes write $\mathbb{P}^k_{\alpha}$ for a maximal projective space on $GO(k,V)$ of the form
(\ref{PkI}) or (\ref{PnI}), and $\mathbb{P}^{n-k}_{\beta}$ for a maximal projective space on $GO(k,V)$ of the
form (\ref{Pn-k}). We will also write $Q^{\dim V-2k}_{\beta}$ for a maximal quadric on $GO(k,V)$ of the
form (\ref{Q}), and $Q^4_{\gamma}$ for a maximal quadric of the form (\ref{Q1}).

\begin{lemma}\label{two possib} Let $1\le k\le n$.

(i) The intersection of any two distinct projective spaces $\mathbb{P}^k_{\alpha}$ and
$(\mathbb{P}^k_{\alpha})'$ (respectively, $\mathbb{P}^{n-k}_{\beta}$ and $(\mathbb{P}^{n-k}_{\beta})'$) on $GO(k,V)$
is either empty or equals a point.

(ii) The intersection of any projective space $\mathbb{P}^k_{\alpha}$ and any quadric
$Q^{\dim V-2k}_{\beta}$ on $GO(k,V)$ is empty, equals a point, or equals a projective line. The intersection of any two
distinct quadrics $Q^{\dim V-2k}_{\beta}$ and $(Q^{\dim V-2k}_{\beta})'$ on $GO(k,V)$ is either empty or equals a point.

(iii) Assume $k\le n-1$. Then the intersection of any two distinct projective spaces $\mathbb{P}^k_{\alpha}$ and $\mathbb{P}^{n-k}_{\beta}$
on $GO(k,V)$ is empty, equals a point, or equals a projective line.

(iv) Assume $k\le n-1$. Then $\mathbb{P}^k_{\alpha}\cap \mathbb{P}^{n-k}_{\beta}=\{V_k\}$ if and only if\
$\mathbb{P}^k_{\alpha}=\mathbb{P}(V_{k+1}^*)$, $\mathbb{P}^{n-k}_{\beta}=\mathbb{P}(V_n/V_{k-1})$ for a configuration
of isotropic vector subspaces $V_{k+1},V_{k-1},V_n$ of $V$ satisfying
$$
V_{k-1}\subset V_k\subset V_n,\ \ \ V_{k+1}\cap V_n=V_k.
$$
\end{lemma}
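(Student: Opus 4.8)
The plan is to reduce every assertion to elementary linear algebra using the explicit descriptions in \refle{Bkn}. A point $V_k\in GO(k,V)$ lies on $\mathbb{P}^k_\alpha=\mathbb{P}(V_{k+1}^*)$ iff $V_k\subset V_{k+1}$; it lies on $\mathbb{P}^{n-k}_\beta=\mathbb{P}(V_n/V_{k-1})$ iff $V_{k-1}\subset V_k\subset V_n$; and it lies on the quadric $Q^{\dim V-2k}_\beta$ of \refeq{Q} iff $V_k\supset V_{k-1}$. Hence each family imposes on its points a lower bound $A\subseteq V_k$ and/or an upper bound $V_k\subseteq B$, where $A$ is a sum and $B$ an intersection of the defining subspaces. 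An intersection of two families is then the set of isotropic $k$-spaces $V_k$ with $A\subseteq V_k\subseteq B$ (the upper bound being absent for a pair of quadrics); when an upper bound is present this set is the linear system $\mathbb{P}(B/A)$, of dimension $\dim B-\dim A-1$, and isotropy is automatic because $B$, being an intersection of isotropic spaces, is isotropic. So in every case the task is to bound $\dim B-\dim A$ from above.

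I would first clear the ``small against small'' intersections. For two distinct $\mathbb{P}^k_\alpha$ we have $A=0$, $B=V_{k+1}\cap V'_{k+1}$ with $\dim B\le k$, giving the empty set or the single point $V_{k+1}\cap V'_{k+1}$; this is the $\alpha$-part of (i). For two distinct quadrics $Q^{\dim V-2k}_\beta$ only the lower bound $A=V_{k-1}+V'_{k-1}$ survives, and distinctness forces $\dim A\ge k$, so $V_k$ must equal $A$ (when $A$ is an isotropic $k$-space) or no such $V_k$ exists; this is the second claim in (ii). For $\mathbb{P}^k_\alpha\cap Q^{\dim V-2k}_\beta$ we get $A=V_{k-1}$, $B=V_{k+1}$, so the set is empty unless $V_{k-1}\subset V_{k+1}$, in which case it is the line $\mathbb{P}(V_{k+1}/V_{k-1})$; this is the first claim in (ii).

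Statements (iii) and (iv) I would handle together, since they are the same computation read in two directions. For $\mathbb{P}^k_\alpha\cap\mathbb{P}^{n-k}_\beta$ the data are $A=V_{k-1}$ and $B=V_{k+1}\cap V_n$, so the intersection is $\mathbb{P}((V_{k+1}\cap V_n)/V_{k-1})$ when $V_{k-1}\subseteq V_{k+1}\cap V_n$ and is empty otherwise. Since $\dim(V_{k+1}\cap V_n)\le\dim V_{k+1}=k+1$, its dimension is at most $1$, which proves (iii). It equals $0$ exactly when $\dim(V_{k+1}\cap V_n)=k$ and $V_{k-1}\subset V_{k+1}\cap V_n$; writing $V_k\eqdef V_{k+1}\cap V_n$ and unwinding these two conditions yields precisely the configuration $V_{k-1}\subset V_k\subset V_n$, $V_{k+1}\cap V_n=V_k$ of (iv), which gives (iv) in both directions.

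The remaining and, I expect, most delicate point is the $\beta$-part of (i): for two distinct flags $(V_{k-1}\subset V_n)$, $(V'_{k-1}\subset V'_n)$ we have $A=V_{k-1}+V'_{k-1}$, $B=V_n\cap V'_n$, and one must bound $\dim B-\dim A$ by $1$. When $V_{k-1}\ne V'_{k-1}$ this is immediate, as then $\dim A\ge k$ and $A$ is the only candidate for $V_k$. The subtle case is $V_{k-1}=V'_{k-1}$ with $V_n\ne V'_n$, where the bound rests entirely on controlling $\dim(V_n\cap V'_n)$ for two distinct maximal isotropic subspaces. Here I would exploit the maximal isotropy of $V_n,V'_n$ and the non-degeneracy of $\Phi$ --- rather than treating $V_n,V'_n$ as arbitrary subspaces --- to estimate this intersection; this linear-algebraic control of how two maximal isotropics meet is the step I expect to require the most care.
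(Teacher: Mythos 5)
Your inclusion-condition framework ($A\subseteq V_k\subseteq B$) is sound where it applies, and it correctly settles (iii), (iv), both claims of (ii), and the $\alpha$-part of (i) in the range $k\le n-1$, where all the spaces involved are given by (\ref{PkI}), (\ref{Pn-k}) and (\ref{Q}). (The paper's own proof is literally ``Exercise,'' so the comparison here is with the statement, not with a written argument.) However, the step you explicitly defer --- the $\beta$-part of (i) when $V_{k-1}=V'_{k-1}$ and $V_n\ne V'_n$ --- is not a delicate estimate still to be supplied: it fails. There is no bound forcing $\dim(V_n\cap V'_n)\le k$ for two distinct maximal isotropic subspaces; in odd dimension $2n+1$ they can meet in dimension $n-1$. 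Concretely, take a basis $e_1,\dots,e_n,f_1,\dots,f_n,g$ with $\Phi(e_i,f_j)=\delta_{ij}$, $\Phi(g,g)=1$ and all other products zero, and set $V_n=\langle e_1,\dots,e_n\rangle$, $V'_n=\langle e_1,\dots,e_{n-1},f_n\rangle$, $V_{k-1}=V'_{k-1}=\langle e_1,\dots,e_{k-1}\rangle$. The two $\beta$-spaces $\mathbb{P}(V_n/V_{k-1})$ and $\mathbb{P}(V'_n/V_{k-1})$ are distinct, yet their intersection is $\mathbb{P}\bigl((V_n\cap V'_n)/V_{k-1}\bigr)\simeq\mathbb{P}^{n-1-k}$, a projective line already for $k=n-2$ and larger for smaller $k$ (note $n\ge3$ is allowed under the paper's standing assumption $\dim V\ge7$). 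So for $k\le n-2$ the linear-algebraic control you hope for cannot exist; the $\beta$-assertion of (i) is unconditionally true within your framework only for $k\ge n-1$, where the $\beta$-spaces are lines or points. Any complete write-up must either restrict this part of the statement or weaken its conclusion (to: a projective space of dimension at most $n-1-k$); your plan, as formulated, cannot close this case.

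A second, independent gap: parts (i) and (ii) allow $k=n$ (only (iii) and (iv) assume $k\le n-1$), and for $k=n$ the spaces $\mathbb{P}^n_\alpha$ are of the form (\ref{PnI}), i.e.\ $\{V'_n\in GO(n,V)\mid\dim(V'_n\cap V_n)=n-1\}$. Membership here is not an inclusion condition, so your formalism says nothing about this case; it is also the only source of the ``equals a point'' alternative in the first claim of (ii), which in your analysis for $k\le n-1$ never occurs. Moreover, the case is genuinely different: if $\dim(V_n\cap W_n)=n-1$, then every isotropic $V'_n$ containing $U_{n-1}:=V_n\cap W_n$ lies in both of the corresponding $\alpha$-spaces, and these $V'_n$ form the conic $GO(1,U_{n-1}^{\bot}/U_{n-1})$, which is a \emph{line} in the embedding given by $\mathcal{O}_{GO(n,V)}(1)$ (recall $t^*\mathcal{O}_{G(n,V)}(1)\cong\mathcal{O}_{GO(n,V)}(2)$ for $k=n$). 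So at $k=n$ the $\alpha$-part of (i) also requires an adjusted conclusion, and in any event an argument that your reduction does not provide.
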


\begin{proof} Exercise.
\end{proof}

\begin{lemma}\label{family CO}
(i) Let $\mathbb{P}^1$ be a projective line on $GO(k,V)$, $x\not\in\mathbb{P}^1$ be a fixed
point in $GO(k,V)$, and $C\subset GO(k-1,V)$ be an irreducible curve such that, for any $V_{k-1}\in C$, the quadric
$GO(1,V_{k-1}^\bot/V_{k-1})$ on $GO(k,V)$ contains $x$ and intersects $\mathbb{P}^1$. Then $C$
is a projective line on $GO(k-1,V)$.

(ii) Assume $1\le k\le n-1$. Let $\mathbb{P}^1$ be a projective
line on $GO(k,V)$, $x\not\in\mathbb{P}^1$ be a fixed point in $GO(k,V)$, and $C\subset GO(k+1,V)$ be an
irreducible curve such that, for any $V_{k+1}\in C$, the projective space $\mathbb{P}(V_{k+1}^*)$ on $GO(k,V)$ contains
$x$ and intersects $\mathbb{P}^1$. Then $C$ is a projective line on $GO(k+1,V)$.
\end{lemma}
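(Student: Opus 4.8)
The plan is to convert both hypotheses into linear-algebraic incidence conditions on the members of $C$ and to show that these conditions pin $C$ down to a single pencil of subspaces, i.e. to a projective line. Write $x=V_k^x\in GO(k,V)$. Recall first that every projective line on $GO(k,V)$ is a Pl\"ucker line of $G(k,V)$, hence of the form $\mathbb{P}^1=\{V_k\mid A\subset V_k\subset B\}$ for a flag $A\subset B$ with $\dim A=k-1$, $\dim B=k+1$; such a line lies on $GO(k,V)$ precisely when $B$ (and hence every intermediate $V_k$) is isotropic. Fix this flag once and for all.

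For (i), the condition $x\in GO(1,V_{k-1}^\bot/V_{k-1})$, the quadric \refeq{Q}, says exactly that $V_{k-1}\subset V_k^x$, while an elementary computation gives that this quadric meets $\mathbb{P}^1$ iff $V_{k-1}\subset B$ and $\dim(V_{k-1}+A)\le k$. Thus every $V_{k-1}\in C$ lies in $D:=V_k^x\cap B$. The first decisive step is to exploit that $C$ is a curve: if $V_k^x\not\subset B$ then $\dim D\le k-1$, which would force $V_{k-1}=D$ and make $C$ a single point; hence $V_k^x\subset B$ and $D=V_k^x$. Now $x\notin\mathbb{P}^1$ forces $A\not\subset V_k^x$, so $A':=A\cap V_k^x$ has dimension $k-2$, and the surviving inequality $\dim(V_{k-1}+A)\le k$ together with $V_{k-1}\subset V_k^x$ forces $A'\subset V_{k-1}$. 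Therefore $C\subset\{V_{k-1}\mid A'\subset V_{k-1}\subset V_k^x\}$, which is a line of $G(k-1,V)$ lying on $GO(k-1,V)$ because $V_k^x$ is isotropic; an irreducible curve inside a $\mathbb{P}^1$ is all of it, so $C$ is this line.

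For (ii) one argues symmetrically. The condition $x\in\mathbb{P}(V_{k+1}^*)$ reads $V_k^x\subset V_{k+1}$, and $\mathbb{P}(V_{k+1}^*)\cap\mathbb{P}^1\ne\emptyset$ translates into $A\subset V_{k+1}$ and $\dim(V_{k+1}\cap B)\ge k$. Again the curve hypothesis removes the rigid case: if $A\not\subset V_k^x$ then $V_{k+1}\supset V_k^x+A$ with $\dim(V_k^x+A)=k+1$, so $V_{k+1}$ would be unique. Hence $A\subset V_k^x$, and $x\notin\mathbb{P}^1$ gives $V_k^x\not\subset B$, so that $V_k^x\cap B=A$ and $B'':=V_k^x+B$ has dimension $k+2$. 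Writing $W:=V_{k+1}\cap B$ one checks $\dim W=k$ and $V_{k+1}=V_k^x+W\subset B''$, so $C$ is contained in the line $\Pi:=\{V_{k+1}\mid V_k^x\subset V_{k+1}\subset B''\}$ of $G(k+1,V)$.

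It remains to promote $\Pi$ to a line on $GO(k+1,V)$, i.e. to see that $B''$ is isotropic; this is the one step I expect to require more than bookkeeping. The cleanest route is a closure argument: $C$ is dense in $\Pi$, and since $GO(k+1,V)$ is closed in $G(k+1,V)$ and contains $C$, it contains $\overline{C}=\Pi$; hence $\Pi$ lies on $GO(k+1,V)$ and $C=\Pi$. Equivalently, one may note that the isotropic members of $\Pi$ form a closed subset of $\Pi\cong\mathbb{P}^1$ that contains the curve $C$, hence is all of $\Pi$, which forces $B''$ isotropic. Either way the genuine content — and the recurring pivot throughout both parts — is the passage from ``$C$ is one-dimensional'' to ``the ambient pencil is entirely isotropic'', the same device that eliminates the rigid configurations above.
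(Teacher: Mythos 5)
Your treatment of part (i) for $k<n$ and of part (ii) coincides, in substance, with the paper's own proof: the paper likewise writes $\mathbb{P}^1$ as a Pl\"ucker pencil $\{V_k\mid U_{k-1}\subset V_k\subset U_{k+1}\}$, uses the two incidence conditions together with the fact that $C$ is a curve rather than a point to force the configuration $W_k\subset U_{k+1}$, $U_{k-1}\not\subset W_k$ in (i) (respectively $U_{k-1}\subset W_k$, $W_k\not\subset U_{k+1}$ in (ii)), and then traps $C$ inside the pencil determined by $W_k\cap U_{k-1}\subset W_k$ (respectively by $W_k\subset W_k+U_{k+1}$). Your closure-plus-polarization argument promoting $B''$ to an isotropic subspace in (ii) is a clean, explicit rendering of the step the paper dismisses with ``one checks''.

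There is, however, one genuine gap. Your opening claim --- ``every projective line on $GO(k,V)$ is a Pl\"ucker line of $G(k,V)$'' --- is false exactly when $k=n=[\dim V/2]$ (under the paper's standing conventions this occurs for $\dim V$ odd), and part (i) includes this case: only part (ii) carries the restriction $k\le n-1$. For $k=n$ the ample generator $\mathcal{O}_{GO(n,V)}(1)$ is the spinor sheaf, and the Pl\"ucker bundle restricts to $\mathcal{O}_{GO(n,V)}(2)$; consequently no Pl\"ucker line lies on $GO(n,V)$ at all (its top space would have to be an isotropic subspace of dimension $n+1$), and the actual projective lines on $GO(n,V)$ are the families $\{V_n\mid V_{n-1}\subset V_n\}$ for fixed isotropic $V_{n-1}$, which are conics in the Pl\"ucker embedding. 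The hypotheses of (i) are satisfiable for $k=n$, so this case is not vacuous, and it requires a separate (short, but structurally different) argument that your proof does not contain. The paper's proof makes the same restriction but flags it explicitly (``Assume $k<n$\dots We leave the case $k=n$ to the reader''), whereas your text rests on the false general claim. By contrast, the corresponding boundary case of (ii), namely $k+1=n$, does no harm: there your own argument shows the hypotheses would force a Pl\"ucker pencil to lie on the maximal orthogonal Grassmannian, which is impossible, so that case is vacuous.
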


\begin{proof}
(i) Assume $k<n$ and let
\begin{equation}\label{pensil}
\mathbb{P}^1=\{V_k\in V|U_{k-1}\subset V_k\subset U_{k+1}\}
\end{equation}
for a fixed isotropic flag $U_{k-1}\subset U_{k+1}$ in $V$.
Next, let $x=W_k$. Since for any $V_{k-1}\in C$, the quadric $GO(1,V_{k-1}^\bot/V_{k-1})$ contains
the point $x$, we have $V_{k-1}\subset W_k$, and consequently
$$
\Span(\underset{V_{k-1}\in C}{\cup}V_{k-1})=W_k.
$$
The condition that the quadric $GO(1,V_{k-1}^\bot/V_{k-1})$ intersects $\mathbb{P}^1$ shows that
\begin{equation}\label{incl 1}
V_{k-1}\subset V_k\subset U_{k+1},\ \ \ U_{k-1}\subset V_k
\end{equation}
for some $V_k\in\mathbb{P}^1$. In particular,
$$
W_k\subset U_{k+1}.
$$
Note that $U_{k-1}\not\subset W_k$ as otherwise $x\in\mathbb{P}^1$.
Therefore $W_{k-2}:=W_k\cap U_{k-1}$ is a
$(k-2)$-dimensional subspace of $W_k$. Now (\ref{incl 1}) implies that
$C=\{V_{k-1}\in GO(k-1,V)|\ W_{k-2}\subset V_{k-1}\subset W_k\}$, i.e. $C$ is a projective line on $GO(k-1,V)$.

We leave the case $k=n$ to the reader.

(ii) Formula (\ref{pensil}) holds also in this case. Furthermore,
$$
\underset{V_{k+1}\in C}{\cap}V_{k+1}=W_k=x.
$$
For any $V_{k+1}\in C$, the condition that  $\mathbb{P}(V_{k+1}^*)$ intersects $\mathbb{P}^1$ yields
$V_k$ such that
$$
U_{k-1}\subset V_k\subset V_{k+1}.
$$
Therefore $U_{k-1}\subset W_k$. Now if $U_{k+1}\in C$, then for any $V_{k+1}\in C$, $\mathbb{P}(V_{k+1}^*)$ intersects
$\mathbb{P}^1$ in $x$, contrary to the assumption that $x\not\in\mathbb{P}^1$. Hence, $U_{k+1}\not\in C$ and one checks
that $C=\{V_{k+1}\subset V|W_k\subset V_{k+1}\subset W_{k+2}\}$, where
$W_{k+2}:=\Span(W_k,U_{k+1})$ is a $(k+2)$-dimensional subspace of $V$. This means
that $C$ is a projective line on $GO(k+1,V)$.

\end{proof}

\end{sub}

\begin{sub}{\bf Symplectic Grassmannians.}\label{symp ind-Grassm}

\rm
Let now $\Phi\in \wedge^2V^*$ be a non-degenerate symplectic form on $V$, $\dim V=2n$.

Assume $1\le k\le n$. Recall that the \textit{$k$-th symplectic Grassmannian}  $GS(k,V)$ is the smooth irreducible
subvariety of $G(k,V)$ consisting of $\Phi$-isotropic $k$-dimensional subspaces of $V$.
It is well known that
\begin{equation}\label{dim IIkn}
\dim GS(k,V)=2kn-\frac{1}{2}k(3k-1).
\end{equation}
It is also known that $\Pic GS(k,V)=\mathbb{Z}\mathcal{O}_{GS(k,V)}(1)$ and
$\mathcal{O}_{GS(k,V)}(1)=i^*\mathcal{O}_{G(k,V)}(1)$, where $i:GS(k,V)\hookrightarrow G(k,V)$
is the tautological embedding.

\vspace{2mm}

One can see that, for $1\le k\le n-1$, there are two families of maximal
projective spaces on $GS(k,V)$ of respective dimensions $k$ and $2n-2k+1$, with bases
$PS_{\alpha}(k,V)$ and $PS_{\beta}(k,V)$.
For $k=n$ there is a single family $PS_{\beta}(n,V)$ of maximal projective lines on $GS(k,V)$.
\begin{lemma}\label{Bkn sympl}

(i) Let $1\le k\le n-1$. Then $PS_{\alpha}(k,V)$ is isomorphic to $GS(k+1,V)$, and for any point $V_{k+1}\in GS(k+1,V)$
the corresponding projective space on $GS(k,V)$ is
\begin{equation}\label{PkS}
\{V_k\in GS(k,V)|\ V_k\subset V_{k+1}\}\simeq\mathbb{P}(V_{k+1}^*).
\end{equation}

(ii) Let $1\le k\le n$. Then $PS_{\beta}(k,V)$ is isomorphic to $GS(k-1,V)$, and for any point
$V_{k-1}\in GS(k-1,V)$ the corresponding projective space on $GS(k,V)$ is
\begin{equation}\label{Z(V_k)}
\{V_k\in GS(k,V)\ |\ V_{k-1}\subset V_k\subset V_{k-1}^\bot\}\simeq\mathbb{P}(V_{k-1}^\bot/V_{k-1}).
\end{equation}

(iii) If $k=n$, then any maximal projective space on $GS(n,V)$ is a projective line.
\end{lemma}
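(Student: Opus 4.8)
The plan is to reduce everything to the ordinary Grassmannian $G(k,V)$ and then impose isotropy. Since the tautological embedding $i:GS(k,V)\hookrightarrow G(k,V)$ satisfies $\mathcal{O}_{GS(k,V)}(1)=i^*\mathcal{O}_{G(k,V)}(1)$, it is linear, so every projective space on $GS(k,V)$ is a projective space on $G(k,V)$ that happens to lie inside $GS(k,V)$. I would therefore start from the classical description of projective spaces on $G(k,V)$: each such space is a flag locus $\Sigma(A,B)=\{V_k\ |\ A\subset V_k\subset B\}$ with either $\dim A=k-1$ (then $\Sigma(A,B)\simeq\mathbb{P}(B/A)$) or $\dim B=k+1$ (then $\Sigma(A,B)\simeq\mathbb{P}((B/A)^*)$), the two maximal families on $G(k,V)$ being $A=0,\ \dim B=k+1$ and $\dim A=k-1,\ B=V$.

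Next I would impose the condition $\Sigma(A,B)\subset GS(k,V)$. In the case $\dim B=k+1$ and $\dim A\le k-2$ (so $\dim\Sigma\ge 2$): given $v,w\in B$ one has $\dim(A+\mathbb{F}v+\mathbb{F}w)\le k$, hence some $V_k\in\Sigma(A,B)$ contains both $v$ and $w$, forcing $\Phi(v,w)=0$; thus $B$ is $\Phi$-isotropic and $\Sigma(A,B)\subseteq\mathbb{P}(B^*)$ with $B$ an isotropic $(k+1)$-space. This yields the $\alpha$-family and already requires $k\le n-1$, since $V$ carries no isotropic $(k+1)$-space once $k=n$; this observation is exactly part (iii). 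In the case $\dim A=k-1$: writing $V_k=A+\mathbb{F}v$, isotropy of $V_k$ is equivalent to $v\in A^\bot$, so $\Sigma(A,B)\subset GS(k,V)$ iff $B\subseteq A^\bot$, and the maximal choice $B=A^\bot$ gives $\mathbb{P}(A^\bot/A)$ with $A$ an isotropic $(k-1)$-space, i.e. the $\beta$-family, of dimension $2n-2k+1$. (Lines, which are simultaneously of both shapes, fall into the $\beta$-family via $A=V_{k-1}$.)

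I would then verify maximality and exhaustiveness together. Every projective space on $GS(k,V)$ is, by the previous step, contained in a member of the $\alpha$- or $\beta$-family, so it suffices to rule out containments among the maximal members. If a projective space $P'$ on $GS(k,V)$ contained $\mathbb{P}(V_{k+1}^*)$, then viewing $P'$ on $G(k,V)$ and using $\bigcap_{V_k\in\mathbb{P}(V_{k+1}^*)}V_k=0$ together with $\Span_{V_k}V_k=V_{k+1}$ forces $P'=\mathbb{P}(V_{k+1}^*)$; similarly $\bigcap_{V_k}V_k=V_{k-1}$ and $\Span_{V_k}V_k=V_{k-1}^\bot$ for the $\beta$-space, and the inequality $\dim V_{k-1}^\bot=2n-k+1>k+1$ (valid for $k<n$) shows $\mathbb{P}(V_{k-1}^\bot/V_{k-1})$ cannot be enlarged to an $\alpha$-type space. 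This simultaneously yields maximality and the recovery formulas $V_{k+1}=\Span_{V_k\in P}V_k$ and $V_{k-1}=\bigcap_{V_k\in P}V_k$.

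Finally, the recovery formulas define morphisms from the families of maximal projective spaces to $GS(k+1,V)$ and $GS(k-1,V)$ that are inverse to $V_{k+1}\mapsto\mathbb{P}(V_{k+1}^*)$ and $V_{k-1}\mapsto\mathbb{P}(V_{k-1}^\bot/V_{k-1})$; being mutually inverse morphisms, they are isomorphisms, giving $PS_\alpha(k,V)\simeq GS(k+1,V)$ and $PS_\beta(k,V)\simeq GS(k-1,V)$, which proves (i) and (ii). The main obstacle is the overlap of the two shapes in low dimension: a one-dimensional projective space is both an $\alpha$- and a $\beta$-type flag locus, and an $\alpha$-type flag locus lying in $GS(k,V)$ need not have isotropic top space, so the delicate point is to attribute each projective space unambiguously to the correct maximal family (which is what the $\dim\ge 2$ argument accomplishes) and to confirm that no projective spaces outside the flag-locus classification occur.
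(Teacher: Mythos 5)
The paper offers no proof of this lemma at all---its ``proof'' reads simply ``Exercise''---so there is no argument of the authors' to compare yours against; judged on its own, your proof is correct and essentially complete. The route you take (reduce to $G(k,V)$ via linearity of the tautological embedding, invoke the classical fact that linearly embedded projective spaces in $G(k,V)$ are flag loci $\Sigma(A,B)$ with $\dim A=k-1$ or $\dim B=k+1$, then impose isotropy) is surely the intended one, and your two key computations are right: for an $\alpha$-shaped locus of dimension at least $2$, the two-vector argument forces $B$ to be isotropic, which simultaneously produces the $\alpha$-family for $k\le n-1$ and rules it out for $k=n$, giving (iii); for a $\beta$-shaped locus, isotropy of $V_k=A+\mathbb{F}v$ amounts to $v\in A^\bot$ precisely because $\Phi(v,v)=0$ for a symplectic form (this is exactly the point where the orthogonal analogue of Lemma \ref{Bkn} yields a quadric rather than the projective space $\mathbb{P}(A^\bot/A)$). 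The span/intersection recovery then gives maximality and identifies the two bases with $GS(k+1,V)$ and $GS(k-1,V)$. Two caveats are worth making explicit. First, your maximality argument for $\alpha$-spaces genuinely requires $k\ge2$: the branch $\dim A'=k-1$ with $A'\subseteq\bigcap V_k=0$ is only contradictory when $k-1\ge1$, and indeed for $k=1$ the spaces $\mathbb{P}(V_2^*)$ are \emph{not} maximal, as they lie inside $\mathbb{P}(V)=GS(1,V)$, which is itself the unique $\beta$-space. This is really a defect of the paper's preamble (which asserts two families of maximal spaces for all $1\le k\le n-1$) rather than of your argument, but it should be flagged rather than left implicit. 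Second, the step ``being mutually inverse morphisms, they are isomorphisms'' quietly assumes that $PS_{\alpha}(k,V)$ and $PS_{\beta}(k,V)$ carry variety structures (say, as closed subvarieties of the variety of linear subspaces of the Pl\"ucker space) for which the assignments $P\mapsto\Span(\cup_{V_k\in P}V_k)$ and $P\mapsto\cap_{V_k\in P}V_k$ are morphisms; this is standard, but a sentence saying so would close the argument.
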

\begin{proof} Exercise.
\end{proof}

In what follows we will sometimes write $\mathbb{P}^k_{\alpha}$ for a maximal projective space on $GS(k,V)$ of the form
(\ref{PkS}), and $\mathbb{P}^{2n-2k+1}_{\beta}$ for a maximal projective space on $GS(k,V)$ of the
form (\ref{Z(V_k)}) (despite the fact that we use the same notation as in the orthogonal case,
we will carefully distinguish between the two cases).

\begin{lemma}\label{intersect dim=1}
Let $\dim V=2n$, $n\ge2$, and $1\le k\le n-1$.

(i) The intersection of any two distinct projective spaces $\mathbb{P}^k_{\alpha}$ and
$(\mathbb{P}^k_{\alpha})'$ (respectively, $\mathbb{P}^{2n-2k+1}_{\beta}$ and $(\mathbb{P}^{2n-2k+1}_{\beta})'$) on
$GS(k,V)$ is either empty or equals a point.

(ii) The intersection of any two distinct projective spaces $\mathbb{P}^k_{\alpha}$ and
$\mathbb{P}^{2n-2k+1}_{\beta}$ on $GS(k,V)$ is either empty or equals a projective line.

\vspace{3mm}
(iii) The spaces $\mathbb{P}^k_{\alpha}$ and $\mathbb{P}^{2n-2k+1}_{\beta}$ intersect in a projective line
if and only if
$\mathbb{P}^k_{\alpha}=\mathbb{P}(V_{k+1}^*)$, $\mathbb{P}^{2n-2k+1}_{\beta}=\mathbb{P}(V_{k-1}^\bot/V_{k-1})$ for a
flag $V_{k-1}\subset V_{k+1}$ of isotropic subspaces of $V$. Then
$\mathbb{P}^k_{\alpha}\cap\mathbb{P}^{2n-2k+1}_{\beta}=\mathbb{P}(V_{k+1}/V_{k-1})$.
\end{lemma}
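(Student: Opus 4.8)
The plan is to work out the two families of projective spaces on $GS(k,V)$ explicitly, using \refle{Bkn sympl}, and then to compute their intersections as intersections of linear subspaces of $G(k,V)$, keeping track of the isotropy conditions imposed by the symplectic form $\Phi$. Throughout I would write a point of $GS(k,V)$ as an isotropic $k$-subspace $V_k\subset V$, and I would represent $\mathbb{P}^k_{\alpha}=\mathbb{P}(V_{k+1}^*)$ as the set of isotropic $V_k\subset V_{k+1}$ for a fixed isotropic $V_{k+1}$, and $\mathbb{P}^{2n-2k+1}_{\beta}=\mathbb{P}(V_{k-1}^\bot/V_{k-1})$ as the set of isotropic $V_k$ with $V_{k-1}\subset V_k\subset V_{k-1}^\bot$ for a fixed isotropic $V_{k-1}$.

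For part (i), I would treat the two same-type cases in parallel. For two spaces $\mathbb{P}(V_{k+1}^*)$ and $\mathbb{P}((V_{k+1}')^*)$, a common point $V_k$ is an isotropic $k$-subspace contained in $V_{k+1}\cap V_{k+1}'$; since the two $(k+1)$-spaces are distinct, their intersection has dimension at most $k$, so either there is no isotropic $k$-subspace inside it (empty) or $V_{k+1}\cap V_{k+1}'$ is exactly $k$-dimensional and isotropic, giving a single point. The same dimension count, applied to the two quotients $V_{k-1}^\bot/V_{k-1}$ and $(V_{k-1}')^\bot/V_{k-1}'$, handles the $\beta$-case: a common $V_k$ must contain both $V_{k-1}$ and $V_{k-1}'$, hence $\Span(V_{k-1},V_{k-1}')\subset V_k$, forcing this span to be at most $k$-dimensional, and distinctness then pins $V_k$ down uniquely when nonempty.

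The substance is in parts (ii) and (iii), which I would prove together. Given $\mathbb{P}(V_{k+1}^*)$ and $\mathbb{P}(V_{k-1}^\bot/V_{k-1})$, a common point is an isotropic $V_k$ satisfying $V_{k-1}\subset V_k\subset V_{k+1}$ and $V_k\subset V_{k-1}^\bot$. I would first observe that $V_{k+1}\subset V_{k-1}^\bot$ is \emph{not} automatic, so the relevant ambient space for $V_k$ is $V_{k-1}+(V_{k+1}\cap V_{k-1}^\bot)$; the set of admissible $V_k$ is the projectivization of $(V_{k+1}\cap V_{k-1}^\bot)/V_{k-1}$ inside $\mathbb{P}(V_{k+1}/V_{k-1})$, cut out by the requirement $V_{k-1}\subset V_k$. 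The key point is that every $V_k$ with $V_{k-1}\subset V_k\subset V_{k+1}$ is automatically $\Phi$-isotropic once $V_{k+1}$ is isotropic, so no further quadratic condition intervenes and the intersection is genuinely linear — this is what forces it to be a projective subspace rather than a quadric, and is the crux distinguishing the symplectic case from the orthogonal one. A dimension bookkeeping then shows $\dim(V_{k+1}\cap V_{k-1}^\bot)\in\{k,k+1\}$: it equals $k+1$ exactly when $V_{k+1}\subset V_{k-1}^\bot$, i.e. when $V_{k-1}\subset V_{k+1}$ (using that $\Phi$ restricted to the isotropic $V_{k+1}$ vanishes and nondegeneracy controls the codimension of $V_{k-1}^\bot$), and $k$ otherwise. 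In the first case I get $\mathbb{P}((V_{k+1}\cap V_{k-1}^\bot)/V_{k-1})=\mathbb{P}(V_{k+1}/V_{k-1})$, a projective line; in the second, the intersection collapses and I must check it is empty rather than a point, which follows because $V_{k-1}\not\subset V_{k+1}$ rules out any $V_k$ containing $V_{k-1}$ inside $V_{k+1}$.

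The main obstacle I anticipate is the careful linear-algebra dimension count establishing exactly when $V_{k+1}\cap V_{k-1}^\bot$ has dimension $k+1$ versus $k$, and confirming that in the degenerate case the intersection is empty and not a single point; this requires using nondegeneracy of $\Phi$ to control $\dim V_{k-1}^\bot=2n-(k-1)$ together with the isotropy of $V_{k+1}$, and correctly assembling these into the statement that a line occurs precisely under the flag condition $V_{k-1}\subset V_{k+1}$. Everything else is routine once the representations from \refle{Bkn sympl} are substituted and the distinctness hypotheses are used to bound the relevant intersection dimensions.
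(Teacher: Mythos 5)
Your overall strategy --- substituting the explicit descriptions of the two families from Lemma \ref{Bkn sympl} and computing the intersections by linear algebra --- is the natural one (the paper leaves this lemma as an exercise), and part (i) is fine, as is your key observation that every $V_k$ with $V_{k-1}\subset V_k\subset V_{k+1}$ is automatically isotropic. However, the ``dimension bookkeeping'' step in (ii)/(iii) is wrong in two ways, and one of the two errors breaks precisely the ``if and only if'' in (iii). First, $\dim(V_{k+1}\cap V_{k-1}^\bot)$ need not lie in $\{k,k+1\}$: this intersection is the kernel of the map $V_{k+1}\to V_{k-1}^*$, $v\mapsto\Phi(v,\cdot)|_{V_{k-1}}$, whose rank can be anything from $0$ to $k-1$, so the dimension can be as small as $(k+1)-(k-1)=2$. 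Second, and more importantly, $V_{k+1}\subset V_{k-1}^\bot$ is \emph{not} equivalent to $V_{k-1}\subset V_{k+1}$; only the implication $V_{k-1}\subset V_{k+1}\Rightarrow V_{k+1}\subset V_{k-1}^\bot$ holds (via $V_{k+1}\subset V_{k+1}^\bot\subset V_{k-1}^\bot$). For a counterexample to the converse take $n=4$, $k=2$, a symplectic basis $e_1,\dots,e_4,f_1,\dots,f_4$, and set $V_1=\Span(e_4)$, $V_3=\Span(e_1,e_2,e_3)$: then $V_3\subset V_1^\bot$ but $V_1\not\subset V_3$, and the intersection $\mathbb{P}(V_3^*)\cap\mathbb{P}(V_1^\bot/V_1)$ is empty, since no $V_2$ can satisfy $V_1\subset V_2\subset V_3$. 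Taken literally, your case division (by whether $V_{k+1}\subset V_{k-1}^\bot$, equivalently by the value of $\dim(V_{k+1}\cap V_{k-1}^\bot)$) places this example in your ``first case'' and declares the intersection a projective line; so as written the argument establishes the false characterization ``line if and only if $V_{k+1}\subset V_{k-1}^\bot$'' rather than the flag criterion $V_{k-1}\subset V_{k+1}$ asserted in (iii).

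The repair is immediate and uses only ingredients already present in your proposal: divide into cases according to whether $V_{k-1}\subset V_{k+1}$, not according to $\dim(V_{k+1}\cap V_{k-1}^\bot)$. If $V_{k-1}\subset V_{k+1}$, then isotropy of $V_{k+1}$ gives $V_{k+1}\subset V_{k-1}^\bot$, so the constraint $V_k\subset V_{k-1}^\bot$ is vacuous and the intersection equals $\{V_k\,|\,V_{k-1}\subset V_k\subset V_{k+1}\}=\mathbb{P}(V_{k+1}/V_{k-1})$, a projective line. If $V_{k-1}\not\subset V_{k+1}$, the intersection is empty by your own final observation, regardless of the relative position of $V_{k+1}$ and $V_{k-1}^\bot$. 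This yields (ii) and (iii) exactly as stated; the intermediate dimension claims should simply be deleted.
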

\begin{proof} Exercise.
\end{proof}

\begin{lemma}\label{family CS}
(i) Assume $2\le k\le n$.
Let $\mathbb{P}^1$ be a projective
line on $GS(k,V)$, $x\not\in\mathbb{P}^1$ be a fixed point in $GS(k,V)$, and $C\subset GS(k-1,V)$ be
an irreducible curve such that, for any $V_{k-1}\in C$, the projective space $\mathbb{P}(V_{k-1}^\bot/V_{k-1})$ on
$GS(k,V)$ contains $x$ and intersects $\mathbb{P}^1$. Then $C$ is a projective line on $GS(k-1,V)$.

(ii) Assume $1\le k\le n-1$. Let $\mathbb{P}^1$ be a projective
line on $GS(k,V)$, $x\not\in\mathbb{P}^1$ be a fixed point in $GS(k,V)$, and $C\subset GS(k+1,V)$ be an
irreducible curve such that, for any $V_{k+1}\in C$, the projective space $\mathbb{P}(V_{k+1}^*)$ on $GS(k,V)$ contains
$x$ and intersects $\mathbb{P}^1$. Then $C$ is a projective line on $GS(k+1,V)$.

\end{lemma}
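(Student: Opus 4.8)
The plan is to mirror the structure of the proof of \refle{family CO}, exploiting the complete parallelism between the symplectic and orthogonal cases once the relevant families of projective spaces are identified. In both parts I would first fix a normal form for the line $\mathbb{P}^1$. By \refle{Bkn sympl}, a projective line on $GS(k,V)$ sits inside one of the two maximal families, and the relevant configuration for a pencil is
\begin{equation}\label{pencilS}
\mathbb{P}^1=\{V_k\in GS(k,V)\mid U_{k-1}\subset V_k\subset U_{k+1}\}
\end{equation}
for a fixed isotropic flag $U_{k-1}\subset U_{k+1}$ in $V$ (here $U_{k-1}^\bot\supset U_{k+1}$ so that all intermediate $V_k$ are isotropic). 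I write $x=W_k$ for the fixed point not on $\mathbb{P}^1$.

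For part (i), the hypothesis is that for every $V_{k-1}\in C$ the space $\mathbb{P}(V_{k-1}^\bot/V_{k-1})$ contains $x=W_k$ and meets $\mathbb{P}^1$. The containment $x\in\mathbb{P}(V_{k-1}^\bot/V_{k-1})$ forces $V_{k-1}\subset W_k\subset V_{k-1}^\bot$ for each $V_{k-1}\in C$; summing over the irreducible curve $C$ gives $\Span(\cup_{V_{k-1}\in C}V_{k-1})=W_k$, since the union spans a subspace of $W_k$ that cannot be a proper (hence fixed) hyperplane as $C$ varies. The intersection condition with $\mathbb{P}^1$ produces, for each $V_{k-1}\in C$, some $V_k\in\mathbb{P}^1$ with $V_{k-1}\subset V_k$ and $U_{k-1}\subset V_k\subset U_{k+1}$; tracking these inclusions exactly as in \refle{family CO}(i) shows $W_k\subset U_{k+1}$ and $U_{k-1}\not\subset W_k$ (otherwise $x\in\mathbb{P}^1$). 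Setting $W_{k-2}:=W_k\cap U_{k-1}$, which is then $(k-2)$-dimensional, I conclude $C=\{V_{k-1}\in GS(k-1,V)\mid W_{k-2}\subset V_{k-1}\subset W_k\}$, a projective line on $GS(k-1,V)$.

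For part (ii), the argument is the direct symplectic transcription of \refle{family CO}(ii): using \refeq{pencilS} again and the hypothesis that $\mathbb{P}(V_{k+1}^*)$ contains $x$ for every $V_{k+1}\in C$, one gets $\cap_{V_{k+1}\in C}V_{k+1}=W_k=x$, the intersection condition gives $U_{k-1}\subset W_k$, and ruling out $U_{k+1}\in C$ (which would force $x\in\mathbb{P}^1$) yields $C=\{V_{k+1}\mid W_k\subset V_{k+1}\subset W_{k+2}\}$ with $W_{k+2}:=\Span(W_k,U_{k+1})$, again a projective line on $GS(k+1,V)$. The only point requiring genuine attention, rather than verbatim copying, is verifying that the spaces produced are themselves isotropic, i.e. that $W_{k+2}$ in (ii) and the members of $C$ in (i) actually lie in the symplectic Grassmannians $GS(k+1,V)$ and $GS(k-1,V)$; this follows because all the ambient flags are built from isotropic subspaces and the symplectic isotropy condition is inherited by subspaces and by spans contained in a common $V_k^\bot$. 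I expect this isotropy bookkeeping to be the main (though mild) obstacle, since unlike the orthogonal quadric families of \refle{family CO} both families here are projective spaces, so there is no quadric-versus-double-cover dichotomy to navigate and the dimension counts go through cleanly.
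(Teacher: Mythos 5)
Your proposal takes exactly the route the paper intends: the paper's own proof of Lemma \ref{family CS} is literally a one-line reference to the proof of Lemma \ref{family CO}, and your transcription of that argument (normal form for the line, the span and intersection bookkeeping, identification of $C$ with a pencil) is the right one, including your observation that the argument simplifies because both symplectic families are projective spaces rather than quadrics.

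Two pieces of symplectic bookkeeping are, however, stated incorrectly, though both are easily repaired. First, the flag $U_{k-1}\subset U_{k+1}$ in your normal form for $\mathbb{P}^1$ is in general \emph{not} an isotropic flag: the correct condition, which your parenthetical does state, is only that $U_{k-1}$ is isotropic and $U_{k+1}\subset U_{k-1}^\bot$. This matters because part (i) includes $k=n$, where $U_{k+1}=U_{n-1}^\bot$ can never be isotropic; with the correct normal form your argument covers $k=n$ uniformly (the paper's orthogonal proof had to set this case aside). Second, in part (ii) the thing to verify is not that $W_{k+2}=\Span(W_k,U_{k+1})$ is isotropic --- that is impossible whenever $k+2>n$, e.g. for $k=n-1$ --- but that the whole pencil $P=\{V_{k+1}\mid W_k\subset V_{k+1}\subset W_{k+2}\}$ lies in $GS(k+1,V)$, equivalently that $W_{k+2}\subset W_k^\bot$. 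Your appeal to ``inheritance of isotropy by spans'' does not yield this, because $U_{k+1}\subset W_k^\bot$ is precisely what needs proof. The quickest fix: each $V_{k+1}\in C$ is isotropic and equals $W_k+V_k$ for some $V_k\in\mathbb{P}^1$, so infinitely many members $V_k$ of the pencil $\mathbb{P}^1$ satisfy $V_k\subset W_k^\bot$; these span $U_{k+1}$, whence $U_{k+1}\subset W_k^\bot$ and $W_{k+2}\subset W_k^\bot$. (Alternatively, since $C$ is an irreducible projective curve contained in $P\cong\mathbb{P}^1$, one has $C=P$ as sets, so $P\subset GS(k+1,V)$ automatically.) With these corrections your outline is a complete proof.
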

\begin{proof}
Very similar to the proof of Lemma \ref{family CO}.
\end{proof}

\vspace{3mm}

\end{sub}

\vspace{1cm}
\section{Linear embeddings of Grassmannians}\label{Lin emb}

\vspace{1cm}

In this section we study linear embeddings of Grassmannians and isotropic Grassmannians.

We start with the following general lemma whose proof we leave to the reader.

\begin{lemma}\label{finite}
Any non-constant morphism of Grassmannians (respectively, orthogonal or symplectic Grassmannians) is finite.
\end{lemma}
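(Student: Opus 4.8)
The plan is to reduce finiteness to a degree computation on curves, using that all the varieties in question are projective with Picard group $\mathbb{Z}$ generated by an ample class. Let $\phi\colon X\to Y$ be a non-constant morphism, where $X$ and $Y$ are Grassmannians or isotropic Grassmannians under the standing conventions, so that $\Pic X=\mathbb{Z}\mathcal{O}_X(1)$ and $\Pic Y=\mathbb{Z}\mathcal{O}_Y(1)$ with both generators ample (in the isotropic case the excluded values of $k$ are exactly those where $\Pic$ might fail to be $\mathbb{Z}$, so nothing is lost). Since $X$ is projective and $Y$ is separated, $\phi$ is automatically proper, and a proper morphism is finite precisely when it is quasi-finite, i.e. when all its fibers are finite. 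Thus it suffices to rule out positive-dimensional fibers.

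First I would pin down the pulled-back line bundle. Because $\Pic X\cong\mathbb{Z}$, there is an integer $d$ with $\phi^*\mathcal{O}_Y(1)\cong\mathcal{O}_X(d)$, and the claim is that $d\ge 1$. To see this, use that $\phi$ is non-constant and $X$ is irreducible to choose an irreducible curve $C\subset X$ on which $\phi$ is non-constant (for instance a general complete-intersection curve, or the preimage of a projective line joining two distinct points of $\phi(X)$). Then $\phi|_C\colon C\to\phi(C)$ is a finite surjection onto a curve, so $\deg\bigl(\mathcal{O}_X(d)|_C\bigr)=\deg\bigl(\phi^*\mathcal{O}_Y(1)|_C\bigr)=(\deg\phi|_C)\cdot\deg\bigl(\mathcal{O}_Y(1)|_{\phi(C)}\bigr)>0$, the positivity coming from the ampleness of $\mathcal{O}_Y(1)$. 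Since $\mathcal{O}_X(1)$ is ample we have $\deg\bigl(\mathcal{O}_X(1)|_C\bigr)>0$, and dividing forces $d>0$, hence $d\ge 1$.

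The final step is then immediate. Suppose, for contradiction, that $\phi$ has a positive-dimensional fiber; it contains an irreducible curve $C_0$, which $\phi$ contracts to a point, so $\deg\bigl(\phi^*\mathcal{O}_Y(1)|_{C_0}\bigr)=0$. On the other hand $\deg\bigl(\phi^*\mathcal{O}_Y(1)|_{C_0}\bigr)=d\cdot\deg\bigl(\mathcal{O}_X(1)|_{C_0}\bigr)\ge\deg\bigl(\mathcal{O}_X(1)|_{C_0}\bigr)>0$, a contradiction. Therefore every fiber of $\phi$ is finite, and being proper and quasi-finite, $\phi$ is finite.

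I do not expect a serious obstacle here: the argument is routine once the degree bookkeeping is set up. The only genuine inputs are that $\Pic$ is $\mathbb{Z}$ with an ample generator in each case — guaranteed by the conventions of Subsections~\ref{linear ortho ind-Grassm} and~\ref{symp ind-Grassm} — and the existence of an irreducible curve on which $\phi$ is non-constant, which is standard for an irreducible projective variety. If one prefers to avoid choosing such a curve, an alternative is to note that $\phi^*\mathcal{O}_Y(1)$ is globally generated, hence nef, giving $d\ge 0$ directly, and then to exclude $d=0$ by observing that $d=0$ would make $\phi^*\mathcal{O}_Y(1)$ trivial while $\phi$ is non-constant.
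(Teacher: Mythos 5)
Your proof is correct. There is, however, nothing in the paper to compare it against: the authors state this lemma with the words ``whose proof we leave to the reader,'' so no proof is given. The argument you supply is the standard one and is surely what was intended: since $\Pic X\cong\mathbb{Z}\mathcal{O}_X(1)$ with $\mathcal{O}_X(1)$ ample (which the paper's conventions on $GO(k,V)$ and $GS(k,V)$ guarantee in all cases under consideration), a non-constant $\phi$ pulls $\mathcal{O}_Y(1)$ back to $\mathcal{O}_X(d)$ with $d\ge 1$ (tested on an irreducible curve joining two points with distinct images, which exists on any irreducible projective variety), so $\phi$ cannot contract any curve; hence all fibers are finite, and a proper quasi-finite morphism is finite. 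The only point worth polishing is your alternative ending: to exclude $d=0$ there, you should add that triviality of $\phi^*\mathcal{O}_Y(1)$ forces the composition of $\phi$ with a projective embedding of $Y$ given by powers of $\mathcal{O}_Y(1)$ to be constant (pulled-back coordinate sections become global sections of $\mathcal{O}_X$, hence constants), so $\phi$ itself is constant; but your main line of argument does not need this.
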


\begin{definition}\label{standard gen}
Let $X,X'$ be Grassmannians. An embedding $\phi':X\hookrightarrow X'$
is a \textit{standard extension}, if there are isomorphisms $i_X,i_{X'}$ and an
embedding $\phi:G(k,V)\hookrightarrow G(k',V')$ for $\dim V'\ge\dim V,\ k'\ge k,$ such
that the diagram
\begin{equation}\label{Def 3.1}
\xymatrix{
X \ar@{^{(}->}[r]^-{\phi'}\ar[d]^-{i_X} & X'\ar[d]^-{i_{X'}}\\
G(k,V) \ar@{^{(}->}[r]^-{\phi} & G(k',V') }
\end{equation}
is commutative and $\phi$ is given by the formula
\begin{equation}\label{st ext}
 \phi:V_k\mapsto V_k\oplus W
\end{equation}
for some fixed isomorphism $V'\simeq V\oplus\hat{W}$ and a fixed subspace $W\subset\hat{W}$ of dimension $k'-k$.
\end{definition}

It is easy to see that a standard extension is a linear embedding. Furthermore, if $\mathbb{P}^q$ is a projective space
on $G(k,V)$, then the inclusion $\mathbb{P}^q\hookrightarrow G(k,V)$ is a standard extension.

\begin{example}
Let $V'=V\oplus\hat{W},\ X=G(n-k,V^*),\ X'=G(k',V'),\ W\subset\hat{W}$ be a fixed subspace of dimension $k'-k$,
$\varepsilon: V\to V$ be any autorphism. Then the morphism
$$
X=G(n-k,V^*)\simeq G(k,V)\to G(k',V')=X',
$$
$$
V_{n-k}\mapsto V_{n-k}^*\mapsto\varepsilon(V_{n-k}^*)\oplus W
$$
is a standard extension. Here $i_X$ is the isomorphism $G(n-k,V^*)\simeq G(k,V)$ and $i_{X'}$ is the automorphism
of $X'$ induced by the automorphism $\varepsilon^{-1}\oplus{\rm id}_{\hat{W}}$ of $V'$.
\end{example}

\begin{remark}\label{intrinsic def}
Note that, for a standard extension $\phi':X\to X'$ the dimensions of $V$ and $V'$ are fixed by the respective
isomorphism classes of $X$ and $X'$, however, the choice between $k$ and $\dim V-k$, respectively, $k'$ and
$\dim V'-k'$, in diagram (\ref{Def 3.1}) is made by the morphism $\phi'$.
Furthermore, if $V,V',k,k'$ are chosen, fixing a standard extension $\phi:G(k,V)\to G(k',V')$
for which $i_{G(k,V)}$ and $i_{G(k',V')}$ are automorphisms is equivalent to fixing some linear algebraic data.
More precisely, given such a standard extension $\phi:G(k,V)\hookrightarrow G(k',V'),$ we can recover $W$ by the formula
$W=\underset{V_k\in G(k,V)}\cap \phi(V_k)$. Set $U:=\Span(\underset{V_k\in G(k,V)}\cup \phi(V_k))$.
Then $W\subset U$ is a flag in $V'$ and $\phi$ determines a surjective linear operator
$\underline{\phi}:U\to V$ with kernel $W$, such that $(\underline{i})^{-1}(V_k)=\phi(V_k)$
for any $k$-dimensional subspace $V_k\in G(k,V)$. It is easy to check that fixing the standard extension $\phi$ is
equivalent to fixing the triple $(W,U,\underline{\phi})$.

In what follows we will write somewhat informally $\phi:G(k,V)\hookrightarrow G(k',V')$ for a general standard
extension, while we will speak about a \textit{strict} standard extension when $i_{G(k,V)}$ and $i_{G(k',V')}$ are
automorphisms. Given a strict standard extension $\phi:G(k,V)\hookrightarrow G(k',V')$, the isomorphism
$V'\simeq V\oplus\hat{W}$ can always be changed so that $\phi$ is given simply by formula (\ref{st ext}).

\end{remark}

We now give a similar definition of a standard extension of isotropic Grassmannians (cf. [DP] and [PT1, section 3]).
\begin{definition}\label{lin mor isotr}
An embedding $\phi:GO(k,V)\hookrightarrow GO(k',V')$ is a \textit{standard extension} if
$\phi$ is given by formula (\ref{st ext}) for some orthogonal isomorphism
$V'\simeq V\oplus\hat{W}$ and a fixed isotropic subspace $W$ of $\hat{W}$.
A standard extension of symplectic Grassmannians is defined in the same way by replacing $GO$ with $GS$,
and the orthogonal isomorphism $V'\simeq V\oplus\hat{W}$ by a symplectic isomorphism $V'\simeq V\oplus\hat{W}$.
\end{definition}

Under an orthogonal isomorphism (respectively, symplectic isomorphism) we mean an isomorphism of vector spaces together
with an isomorphism of forms $\Phi'\simeq\Phi\oplus\hat{\Phi}$, where $\Phi$ is a fixed symmetric (respectively,
symplectic) form on $V$, $\Phi'$ is a fixed (respectively, symplectic) form on $V'$, and $\hat{\Phi}$ is a fixed
symmetric (respectively, symplectic) form on $\hat{W}$.

\begin{remark}\label{intrinsic def2}
A standard extension of isotropic Grassmannians can be defined as follows: consider a flag of subspaces
$W\subset U$ of $V'$, where $W$ is isotropic and there is a surjective linear operator $\underline{\phi}:U\to V$ with
kernel $W$, such that the form $\underline{\phi}^*\Phi$ coincides with the form induced on $U$ by the form $\Phi'$.
This datum defines an embedding $GO(k,V)\to GO(k',V')$ (respectively, $GS(k,V)\to GS(k',V')$) by the formula
$$
\phi:\ V_k\mapsto(\underline{\phi})^{-1}(V_k)\subset U\subset V'\ \ \ \textrm{for} \ \ \ V_k\in GO(k,V) \ \ \
(\textrm{resp.,}\ V_k\in GS(k,V)).
$$
Furthermore,
\begin{equation}\label{gen W,U}
W=\cap \phi(V_k),\ \ \ U=\Span(\cup \phi(V_k)),
\end{equation}
where $V_k$ runs over $GO(k,V)$ (respectively, $GS(k,V)$) and the intersection and the union are taken in $V'$.
\end{remark}

\begin{remark}\label{prop of st ext1}
Let $\phi:G(k,V)\to G(k',V')$ be a strict standard extension (respectively, $\phi:GO(k,V)\to GO(k',V')$ or
$\phi:GS(k,V)\to GS(k',V')$ be a standard extension). Then
\begin{equation}\label{stand ineq1}
k'\ge k\ \ \  \textrm{and}\ \ \ \ \dim V'-k'\ge\dim V-k\ge0
\end{equation}
(respectively,
\begin{equation}\label{stand ineq2}
k'\ge k\ \ \  \textrm{and}\ \ \ \frac{1}{2}\dim V'-k'\ge\frac{1}{2}\dim V-k\ge0).
\end{equation}
Indeed, Definition \ref{standard gen} implies $k'-k=\dim W\ge0$.
Next, from $\dim W\le\dim\hat{W}=\dim V'-\dim V$
it follows that $\dim V'-k'=\dim V-k+(\dim\hat{W}-\dim W)\ge\dim V-k$. This proves (\ref{stand ineq1}).
As for (\ref{stand ineq2}), from Definition
\ref{lin mor isotr} we have $k'-k=\dim W\ge0$. Furthermore, as $V_k$ is $\Phi$-isotropic,
$V_{k'}:=V_k\oplus W$ is $\Phi'$-isotropic and $W$ is $\hat{\Phi}$-isotropic, we have
$k\le\frac{1}{2}\dim V,\ k'\le\frac{1}{2}\dim V',\ 0\le\dim W\le\frac{1}{2}\dim\hat{W}=\frac{1}{2}(\dim V'-\dim V)$.
This implies $\frac{1}{2}\dim V'-k'=\frac{1}{2}\dim V-k+\frac{1}{2}\dim\hat{W}-\dim W\ge\frac{1}{2}\dim V-k\ge0$.
\end{remark}

\begin{definition}\label{isotr st ext}
(a) Let $V''$ be an isotropic subspace of $V$. For $\mathbb{Z}_+\ni k\le\dim V''$, we call
the natural inclusions $G(k,V'')\hookrightarrow GO(k,V)$ and $G(\dim V''-k,{V''}^*)\hookrightarrow GO(k,V)$
(respectively, $G(k,V'')\hookrightarrow GS(k,V)$ and $G(\dim V''-k,{V''}^*)\hookrightarrow GS(k,V)$)
\textit{isotropic extensions}.

(b) A \textit{combination of isotropic and standard extensions} is an embedding of the form
$$
GO(k,V)\overset{t}\hookrightarrow G(k,V)\overset{\phi'}\hookrightarrow G(l,U)\overset{\tau}\hookrightarrow
GO(l,\tilde{U})\overset{\phi''}\hookrightarrow GO(k',V')
$$
(respectively,
$$
GS(k,V)\overset{t}\hookrightarrow G(k,V)
\overset{\phi'}\hookrightarrow G(l,U)\overset{\tau}\hookrightarrow GS(l,\tilde{U})\overset{\phi''}\hookrightarrow
GS(k',V')),
$$
where $t$ is the tautological embedding, $\phi'$ and $\phi''$ are standard extensions and $\tau$ is an
isotropic extension.
\end{definition}

Note that a combination of isotropic and standard extensions is always given by one of the formulas
$V_k\mapsto V_k\oplus W$ or
$V_k\mapsto V_k^\bot\oplus W$ for an appropriately chosen orthogonal (respectively, symplectic) isomorphism
$V'\simeq V\oplus\hat{W}$ and an isotropic subspace $W\subset\hat{W}$. Here $\bot$ refers to the orthogonal
(respectively, symplectic) structure on $V$. Furthermore, one easily proves the following lemma.

\begin{lemma}\label{compos of combin}
A composition of combinations of isotropic and standard extensions is a combination of isotropic and standard
extensions.
\end{lemma}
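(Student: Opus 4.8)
The plan is to avoid collapsing a combination into the single closed formula $V_k\mapsto V_k\oplus W$ or $V_k\mapsto V_k^\bot\oplus W$, and instead to work directly with the four-step factorization of Definition \ref{isotr st ext}(b), telescoping the eight-step composite of two combinations back into a four-step one of the required shape. Writing the two given combinations (in the orthogonal case; the symplectic case is identical after replacing $GO$ by $GS$) as
\[
f=\phi''_1\circ\tau_1\circ\phi'_1\circ t,\qquad g=\phi''_2\circ\tau_2\circ\phi'_2\circ t,
\]
the composite $g\circ f$ is their concatenation, and the only part that does not already have the right shape is the middle block
\[
G(l_1,U_1)\overset{\tau_1}\to GO(l_1,\tilde U_1)\overset{\phi''_1}\to GO(k',V')\overset{t}\to G(k',V')\overset{\phi'_2}\to G(l_2,U_2).
\]
The goal is to show this block collapses to $t$ followed by a single standard extension, so that $g\circ f$ acquires the form $\phi''_2\circ\tau_2\circ(\text{standard})\circ t$.

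First I would record three elementary closure facts. Sub-lemma (A): a composition of standard extensions of ordinary Grassmannians is again a standard extension; this is the one substantial ingredient. Fact (B): $t\circ\phi''_1=\bar\phi''_1\circ t$, where $\bar\phi''_1:G(l_1,\tilde U_1)\to G(k',V')$ is the standard extension of ordinary Grassmannians given by the same formula $V_{l_1}\mapsto V_{l_1}\oplus W''_1$ (one merely forgets the forms). Fact (C): for an isotropic extension $\tau_1$, the composite $t\circ\tau_1:G(l_1,U_1)\to G(l_1,\tilde U_1)$ is the standard extension induced by the inclusion $U_1\subset\tilde U_1$, i.e.\ formula (\ref{st ext}) with $W=0$; in the dual-type case a duality isomorphism is absorbed, which is legitimate since Definition \ref{standard gen} permits the identifications $i_X,i_{X'}$.

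Granting (A)--(C), the telescoping is mechanical. By (B), $\phi'_2\circ t\circ\phi''_1=(\phi'_2\circ\bar\phi''_1)\circ t$, and by (A) the map $\sigma:=\phi'_2\circ\bar\phi''_1$ is a standard extension, so the block above equals $\sigma\circ(t\circ\tau_1)$. By (C) the map $\iota:=t\circ\tau_1$ is standard, hence by (A) so is $\rho:=\sigma\circ\iota$, and finally $\tilde\phi':=\rho\circ\phi'_1$ is standard by (A). This leaves
\[
g\circ f=\phi''_2\circ\tau_2\circ\tilde\phi'\circ t,
\]
which is exactly a combination of isotropic and standard extensions in the sense of Definition \ref{isotr st ext}(b), since $\tau_2$ is still a valid isotropic extension with $U_2$ isotropic in $\tilde U_2$.

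The main obstacle is sub-lemma (A), and within it the bookkeeping of the isomorphisms allowed in Definition \ref{standard gen} --- in particular the self-duality $G(k',V')\simeq G(\dim V'-k',(V')^*)$ that can appear as a middle identification. For strict standard extensions this is trivial: $V_k\mapsto V_k\oplus W_1$ followed by $V_{k'}\mapsto V_{k'}\oplus W_2$ gives $V_k\mapsto V_k\oplus(W_1\oplus W_2)$ with $V''=V\oplus(\hat W_1\oplus\hat W_2)$, again of the form (\ref{st ext}). In general I would invoke the intrinsic description of Remark \ref{intrinsic def}, whereby a standard extension is encoded by a triple $(W,U,\underline{\phi})$; an automorphism or duality of the intermediate Grassmannian is induced by an element of $\GL$ of the ambient space (or by the form-induced identification of a space with its dual), and such a map transports one admissible triple to another. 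This shows that standard extensions are closed under composition and under pre- and post-composition with isomorphisms of Grassmannians, which is precisely what (A), (B), (C) require; verifying that the transported data again satisfy the inequalities (\ref{stand ineq1}) is then routine.
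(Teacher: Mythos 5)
Your proof is correct, but it follows a genuinely different route from the paper's. The paper offers no written proof of Lemma \ref{compos of combin}: it is prefaced by the observation that every combination of isotropic and standard extensions is globally given by one of the closed formulas $V_k\mapsto V_k\oplus W$ or $V_k\mapsto V_k^\bot\oplus W$, and the intended (``one easily proves'') argument is to compose these closed forms directly and check that the result is again of one of the two shapes. You never invoke the closed-form description; instead you telescope the eight-step composite back into the four-step shape of Definition \ref{isotr st ext}(b), using the commutation (B) of a strict isotropic standard extension (Definition \ref{lin mor isotr}) with the tautological embedding, the absorption (C) of ``tautological after isotropic extension'' into a Definition-\ref{standard gen} standard extension, and the closure (A) of standard extensions of ordinary Grassmannians under composition. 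What your route buys is uniformity: the duality-type isotropic extensions and the identifications $i_X,i_{X'}$ are absorbed once and for all into Definition \ref{standard gen}, so no case analysis on the two formulas (and no tracking of how $\bot$ interacts with orthogonal direct sums) is needed, and the final expression $\phi''_2\circ\tau_2\circ\tilde\phi'\circ t$ matches Definition \ref{isotr st ext}(b) verbatim. What it costs is that the entire weight rests on sub-lemma (A): in the generality of Definition \ref{standard gen}, where $i_X,i_{X'}$ are arbitrary isomorphisms of Grassmannians, composing two standard extensions requires knowing that any isomorphism between Grassmannians is induced by a linear isomorphism or by such an isomorphism combined with duality (Chow's theorem); you assert this rather than prove it, but the paper itself relies on the same fact tacitly in Remark \ref{intrinsic def}, so your argument sits at the same level of rigor as the surrounding text. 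The paper's route, by contrast, is shorter once the closed-form description is granted, but that description is itself stated without proof and its verification involves essentially the same bookkeeping you carry out explicitly.
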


\begin{remark}\label{prop of st ext}
Let $\phi':X\to X'$ be a standard extension, where $X$ and $X'$ are both Grassmannians or, respectively, isotropic
Grassmannians of the same type. It is easy to see that, if $X$ and $X'$ are not (isomorphic to) projective spaces,
then $\phi'$ does not factor through an embedding of a projective space into $X'$. If $X$ and $X'$ are isotropic, then
$\phi'$ is not a combination of isotropic and standard extensions.
\end{remark}

{\bf Theorem 1}.
\textit{Let $X\simeq G(k,V)$, $X'\simeq G(k',V')$, or $X=GO(k,V),\ X'=GO(k',V')$, or
$X=GS(k,V),\ X'=GS(k',V')$, and let $\phi: X \to X'$ be a linear morphism. If $X=GO(k,V),\ X'=GO(k',V')$, assume
in addition that either $k\le[\frac{\dim V}{2}]-3$ and $k'\le[\frac{\dim V'}{2}]-3$, or that
$[\frac{\dim V'}{2}]-k'\le[\frac{\dim V}{2}]-k\le2$ and both $\dim V$ and $\dim V'$ are odd.
Then some of the following statements holds:\\
(i) $\phi$ is a standard extension;\\
(ii) $X$ and $X'$ are isotropic Grassmannians and $\phi$ is a combination of isotropic and standard
extensions;\\
(iii) $\phi$ factors through a projective space on $X'$ or, in case $X'=GO(k',V')$, through a maximal quadric
$Q_{\beta}^{\dim V'-2k'}$.}

\begin{proof}
We first consider in detail the case of symplectic Grassmannians.
The proof goes by induction on $k$. For $k=1$ the symplectic Grassmannian
$GS(1,V)$ equals $\mathbb{P}(V)$, hence the linear morphism $\phi$ maps it isomorphically onto a projective space in
$X'$. Therefore statement (iii) holds trivially in this case.

Assume now that $k\ge2$ and the assertion holds for $k-1$ and any $k'\ge1$.
Set $n:=\frac{1}{2}\dim V$, $n':=\frac{1}{2}\dim V'$, $Y_{\beta}:=GS(k-1,V)$.
Let $Z:=\{(V_{k-1},x)\in Y_{\beta}\times X|\ x\in\mathbb{P}(V_{k-1}^\bot/V_{k-1})\}\overset{p}\to Y_{\beta}$
be the family of projective spaces $\mathbb{P}^q_\beta,\ q=2n-2k+2,$ on $X\simeq GS(k,V)$.
Since $\phi$ is a linear morphism, for any $V_{k-1}\in Y_{\beta}$ the morphism
$\phi(\mathbb{P}(V_{k-1}^\bot/V_{k-1}))$ is a projective space on $X'$.
Therefore we obtain a family
$\tilde{Z}:=\{(V_{k-1},x)\in Y_{\beta}\times X'|\ x\in\phi(\mathbb{P}(V_{k-1}^\bot/V_{k-1}))\}
\overset{\tilde{p}}\to Y_{\beta}$ of $q$-dimensional projective spaces on $X'$.
We claim that

(a) all spaces of the family $\tilde{p}:\tilde{Z}\to Y_{\beta}$ lie in the spaces of the
family with base $Y'_{\alpha}:=GS(k'+1,V')$ (this is possible only if $k\le n-1$),\\
or

(b) all spaces of the family
$\tilde{p}:\tilde{Z}\to Y_{\beta}$ lie in the spaces of the family with base $Y'_{\beta}:=GS(k'-1,V')$.

Indeed, consider the varieties $\Sigma_{\alpha}:=\{(V_{k-1},V'_{k'+1})\in Y_{\beta}\times Y'_{\alpha}|\
\phi(\mathbb{P}(V_{k-1}^\bot/V_{k-1}))\subset\mathbb{P}((V'_{k'+1})^*)\}$
and $\Sigma_{\beta}:=\{(V_{k-1},V'_{k'-1})\in Y_{\beta}\times Y'_{\beta}|\ \phi(\mathbb{P}(V_{k-1}^\bot/V_{k-1})
\subset \mathbb{P}(V_{k'-1}^\bot/V_{k'-1})\}$
with natural projections
$Y_{\beta}\overset{p_\alpha}\leftarrow\Sigma_{\alpha}\overset{q_{\alpha}}\to Y'_{\alpha}$
and
$Y_{\beta}\overset{p_\beta}\leftarrow\Sigma_{\beta}\overset{q_{\beta}}\to Y'_{\beta}$.
By construction,
$\Sigma_{\alpha}$ is a closed subset of $Y_{\beta}\times Y'_{\alpha}$ and $p_{\alpha}$ is a projective morphism. Hence,
$W_{\alpha}:=p_{\alpha}(\Sigma_{\alpha})$ is a closed subset of $Y_{\beta}$. By a similar reason,
$W_{\beta}:=p_{\beta}(\Sigma_{\beta})$ is a closed subset of $Y_{\beta}$. Since any space of the family
$\tilde{Z}\to Y_{\beta}$ lies
in at least one maximal space on $X'$, it follows that $W_{\alpha}\cup W_{\beta}=Y_{\beta}$. However, $Y_{\beta}$ is
irreducible, therefore either $W_{\alpha}=Y_{\beta}$ (i.e. case (a) holds), or
$W_{\beta}=Y_{\beta}$ (i.e. case (b) holds).

We now consider the cases (a) and (b) separately.

In the case (a), by Lemma \ref{intersect dim=1},(i), each space of the family $\tilde{p}:\tilde{Z}\to Y_{\beta}$ lies
in a unique space $\mathbb{P}((V'_{k'+1})^*)$ of the family with base $Y'_{\alpha}$. This
means that $p_{\alpha}:\Sigma_{\alpha}\to Y_{\beta}$ is a bijective morphism, hence an isomorphism as $Y_{\beta}$ is a
smooth variety. Therefore, there is a well defined morphism
\begin{equation}\label{mor i}
\phi_{\alpha}:=q_{\alpha}\circ p_{\alpha}^{-1}:\ Y_{\beta}=GS(k-1,V)\to Y'_{\alpha}=GS(k'+1,V').
\end{equation}
Moreover, there is a commutative diagram
\begin{equation}\label{phi Y^2}
\xymatrix{
& \Gamma\ar[rrr]^{\phi_{\Gamma}}\ar[dl]_{p_1}\ar[dr]^{p_2} &&& \Gamma'_{\alpha}\ar[dl]_{\bar{p}_1}\ar[dr]^{\bar{p}_2}& \\
Y_{\beta}\ar[dr]&& X\ar[dr]& Y'_{\alpha} && X',\\
&\ar[r]_{\phi_{\alpha}}&\ar[ur]&\ar[r]_{\phi}&\ar[ur]& }
\end{equation}
where $\Gamma$ is the variety of isotropic $(k-1,k)$-flags in $V$,
$\Gamma'_{\alpha}$ is the variety of isotropic $(k',k'+1)$-flags in $V'$,
and $\phi_\Gamma,\ p_1,\ p_2,\ \bar{p}_1$ and $\bar{p}_2$ are the induced projections.

Assume that $\phi_{\alpha}$ is not a constant map. We first show that $\phi_{\alpha}$ is linear.
Fix $V_{k+1}\in GS(k+1,V)$ and a subspace $V_{k-2}$ of $V_{k+1}$.
Consider the
projective plane $\mathbb{P}^2_X:=\mathbb{P}((V_{k+1}/V_{k-2})^*)$ on $X$.
The points on $\mathbb{P}^2_X$ are $k$-dimensional subspaces
$U_k\subset V$ such that $V_{k-2}\subset U_k\subset V_{k+1}$.
According to Lemma \ref{Bkn sympl},(i), any $U_k$ defines a projective space $\mathbb{P}(U_k^*)$ on $Y_{\beta}$,
and also a projective line
$\mathbb{P}((U_k/V_{k-2})^*)$ on $Y_{\beta}$.
Fix $U_k$ and denote the projective line $\mathbb{P}((U_k/V_{k-2})^*)$ by $\mathbb{P}^1_{Y_{\beta}}$.
Furthermore, fix a projective line $\mathbb{P}^1_X$ in $\mathbb{P}^2_X$
and consider the rational curve $\mathbb{P}^1_{\Gamma}:=
\{(V_{k-1},V_k)\in\Gamma|\ V_{k-1}\in\mathbb{P}^1_{Y_{\beta}},\ V_k\in \mathbb{P}^1_X\}$ on $\Gamma$.
Diagram (\ref{phi Y^2}) yields a commutative diagram
\begin{equation}\label{curves}
\xymatrix{
\mathbb{P}^1_{Y_{\beta}}\ar[d]_-{\phi_{\alpha}|_{\mathbb{P}^1_{Y_{\beta}}}} &&
\mathbb{P}^1_{\Gamma}\ar[rr]^-{p_2|_{\mathbb{P}^1_{\Gamma}}}
\ar[d]_{\phi_{\Gamma}|_{\mathbb{P}^1_{\Gamma}}}\ar[ll]_-{p_1|_{\mathbb{P}^1_{\Gamma}}} &&
\mathbb{P}^1_X\ar[d]_{\phi|_{\mathbb{P}^1_X}}  \\
\phi_{\alpha}(\mathbb{P}^1_{Y_{\beta}}) &&
\phi_{\Gamma}(\mathbb{P}^1_{\Gamma})\ar[rr]^-{\bar{p}_2|_{\phi_{\Gamma}(\mathbb{P}^1_{\Gamma})}}
\ar[ll]_-{\bar{p}_1|_{\phi_{\Gamma}(\mathbb{P}^1_{\Gamma})}} && \phi(\mathbb{P}^1_X).}
\end{equation}
Since $\phi:X\to X'$ is linear, $\phi|_{\mathbb{P}^1_X}:\mathbb{P}^1_X\to \phi(\mathbb{P}^1_X)$ is an isomorphism.
Furthermore, $p_2|_{\mathbb{P}^1_{\Gamma}}$ is an isomorphism by construction. Therefore,
$\bar{p}_2|_{\phi_{\Gamma}(\mathbb{P}^1_{\Gamma})}$ and $\phi_{\Gamma}|_{\mathbb{P}^1_{\Gamma}}$ are isomorphisms.

We claim now that $p_1|_{\mathbb{P}^1_{\Gamma}}$ and $\bar{p}_1|_{\phi_{\Gamma}(\mathbb{P}^1_{\Gamma})}$ are also
isomorphisms. Indeed, for $p_1|_{\mathbb{P}^1_{\Gamma}}$ this holds by construction. Consider
$\bar{p}_1|_{\phi_{\Gamma}(\mathbb{P}^1_{\Gamma})}$. As $\phi(\mathbb{P}^1_X)$ is a projective line in $X'$, the
subspaces of $V'$ corresponding to the points of $\phi(\mathbb{P}^1_X)$ lie in some $k'+1$-dimensional subspace
$V'_{k'+1}$ of $V'$. This implies in view of Lemma \ref{intersect dim=1},(i) that, for any two distinct points
$V'_{k'},V''_{k'}\in\phi(\mathbb{P}^1_X)$, the projective spaces $\mathbb{P}({V'}_{k'}^\bot/V'_{k'})$ and
$\mathbb{P}({V''_{k'}}^\bot/V''_{k'})$ on $Y'_{\alpha}$ have $V'_{k'+1}$ as unique common point.
Note that, for each $V'_{k'}\in\phi(\mathbb{P}^1_X)$, $\mathbb{P}({V'_{k'}}^\bot/V'_{k'})$
is the isomorphic image under $\bar{p}_1$ of the projective space $\bar{p}_2^{-1}(V'_{k'})$, and that
$\bar{p}_2^{-1}(V'_{k'})\cap\phi_{\Gamma}(\mathbb{P}^1_{\Gamma})$ is a single point.
Hence, either $\bar{p}_1(\phi_{\Gamma}(\mathbb{P}^1_{\Gamma}))=\phi_{\alpha}(\mathbb{P}^1_{Y_{\beta}})$ equals the point
$V'_{k'+1}$, or $\bar{p}_1|_{\phi_{\Gamma}(\mathbb{P}^1_{\Gamma})}$ is an isomorphism. However, the former case is
impossible since $\phi_{\alpha}|_{\mathbb{P}^1_{Y_{\beta}}}$ is a non-constant, hence finite morphism by Lemma
\ref{finite}. Thus $\bar{p}_1|_{\phi_{\Gamma}(\mathbb{P}^1_{\Gamma})}$ is an isomorphism.

Diagram (\ref{curves}) implies now that $\phi_{\alpha}|_{\mathbb{P}^1_{Y_{\beta}}}$ is also an isomorphism.
To show that $\phi_{\alpha}$ is linear it suffices to prove that $\phi_{\alpha}(\mathbb{P}^1_{Y_{\beta}})$
is a projective line on $Y'_{\alpha}$. This latter fact follows directly from Lemma \ref{family CS},(ii)
applied to the following data: $\mathbb{P}^1=\mathbb{P}^1_{Y_{\beta}}$,
$x=\phi(\Span(\underset{V_{k-1}\in\mathbb{P}^1}\cup V_{k-1}))$, $C=\phi_{\alpha}(\mathbb{P}^1_{Y_{\beta}})$.

Note next that the diagram (\ref{phi Y^2}) allows to reconstruct $\phi$ from $\phi_{\alpha}$. Indeed, for any
$V_k\in X$ the projective space $\mathbb{P}(V_k^*)=p_1(p_2^{-1}(V_k))$ is mapped via
$\phi_{\alpha}$ to the unique projective space $\mathbb{P}({V'_{k'}}^\bot/V'_{k'})$ which contains
$\phi_{\alpha}(\mathbb{P}^{k-1}_{\alpha})$. The original morphism $\phi$ is precisely the map assigning $V'_{k'}$ to
$V_k\in X$.

We are now ready to apply the induction assumption to $\phi_{\alpha}$. Since $\phi_{\alpha}$ is linear we conclude that
there are the following three possibilities: (a.1) $\phi_{\alpha}$ is a standard extension, or (a.2) $\phi_{\alpha}$
factors through an isotropic extension, or (a.3) $\phi_{\alpha}$ factors through a morphism to a projective space in
$Y_\alpha$.

(a.1) In this case we have a fixed isomorphism $V'\simeq V\oplus \hat{W}$ and $\phi_{\alpha}$ is given by the formula
\begin{equation}\label{st eq 2}
V_{k-1}\mapsto V_{k-1}\oplus W
\end{equation}
for an isotropic subspace $W$ of $\hat{W}$ (see Remark \ref{intrinsic def}). Therefore, for any $V_k\in X$, the
space $\mathbb{P}^{k-1}_{\alpha}=\mathbb{P}(V_k^*)$ on $Y_{\beta}$ is embedded by $\phi_{\alpha}$ in the projective
space $\mathbb{P}^{k'+1}_{\alpha}=\mathbb{P}((V_k\oplus W)^*)$ on $Y'_{\alpha}$.
Since $\mathbb{P}(V_k^*)=p_1(p_2^{-1}(V_k))$, diagram (\ref{phi Y^2}) implies that
$\phi_{\alpha}(\mathbb{P}(V_k^*))\subset\mathbb{P}^{\dim V'-2k'-1}_{\beta}=\bar{p}_1(\bar{p}_2^{-1}(V'_{k'}))$, where
$V'_{k'}:=\phi(V_k)$. We have thus shown that $\phi_{\alpha}(\mathbb{P}(V_k^*))$ lies in the intersection of maximal
projective spaces from the distinct families $PS_{\alpha}(k'+1,V')$ and $PS_{\beta}(k'+1,V')$ on $Y'_{\alpha}$. Hence,
by Lemma \ref{intersect dim=1},(ii), $k-1=\dim\phi_{\alpha}(\mathbb{P}(V_k^*))\le1$, i.e. $k=2$. Therefore,
$X\simeq GS(2,V),\ Y_{\beta}=\mathbb{P}(V)$, and $\phi_{\alpha}$ is an embedding of $\mathbb{P}(V)$ into
$Y'_{\alpha}$. Then $\phi_{\alpha}(\mathbb{P}(V))$ lies in a unique maximal projective space
$\mathbb{P}((V'_{k'+2})^*)$ for an isotropic subspace $V'_{k'+2}$ of $V'$.
This yields a monomorphism $j:V\hookrightarrow(V'_{k'+2})^*$.
Now the above reconstruction of $\phi$ via $\phi_{\alpha}$ shows that $\phi$ decomposes as
\begin{equation}\label{decomp1}
X=GS(2,V)\overset{t}\hookrightarrow G(2,V)\overset{\tilde{j}}\hookrightarrow
G(2,(V'_{k'+2})^*)\simeq G(k',V'_{k'+2})\overset{\tau}\hookrightarrow GS(k',V')=X',
\end{equation}
where $t$ is the tautological
embedding, the embedding $\tilde{j}$ is induced by the monomorphism $j$, and the embedding $\tau$ is induced by the
embedding of $V'_{k'+2}$ in $V'$.
Hence, $\phi$ is a combination of isotropic and standard extensions. One checks that as a consequence $\phi_{\alpha}$
is also a combination of isotropic and standard extensions. However, this contradicts to Remark \ref{prop of st ext},
and we conclude that case (a.1) is impossible.

(a.2) In this case  $\phi_{\alpha}$ is given by one of the formulas
\begin{equation}\label{st eq 2a}
V_{k-1}\mapsto V_{k-1}\oplus W
\end{equation}
or
\begin{equation}\label{st eq 2b}
V_{k-1}\mapsto V^\bot_{k-1}\oplus W,
\end{equation}
where $\bot$ refers to the symplectic structure on $V$. If $\phi_{\alpha}$ is given by (\ref{st eq 2a}), then
for an arbitrary $V_k\in X$,
$\phi_{\alpha}(\mathbb{P}(V^*_k))\subset\mathbb{P}((V_k\oplus W)^*)$.
Assume that $\dim\phi_{\alpha}(\mathbb{P}(V^*_k))>1$. Then by Lemma \ref{intersect dim=1},(ii),
$\phi_{\alpha}(\mathbb{P}(V^*_k))\not\subset\mathbb{P}(\phi(V_k)^{\bot'}/\phi(V_k))$,
where the symbol $\bot'$ refers to the symplectic structure on $V'$.
On the other hand, in view of diagram
(\ref{phi Y^2}), $\phi_{\alpha}(\mathbb{P}(V^*_k))\subset\mathbb{P}(\phi(V_k)^{\bot'}/\phi(V_k))$.
This implies $k=2$. Therefore, $\phi_{\alpha}$ is a combination of isotropic and standard extensions of the form
$$
Y_{\beta}=\mathbb{P}(V)\overset{\phi'_{\alpha}}\hookrightarrow G(l,U)
\overset{\tau_{\alpha}}\hookrightarrow
GS(l,\tilde{U})\overset{\phi''_{\alpha}}\hookrightarrow GS(k'+1,V')=Y'_{\alpha}
$$
(see Definition \ref{isotr st ext}). Then using diagram (\ref{phi Y^2}) it is easy to check that $\phi$ is given by
the formula
$$
V_k\mapsto V_k\oplus W
$$
and is a combination of isotropic and standard extensions of the form
$$
X=GS(2,V)\overset{t}\hookrightarrow G(2,V)\overset{\phi'}\hookrightarrow G(l+1,U)
\overset{\tau}\hookrightarrow
GS(l+1,\tilde{U})\overset{\phi''}\hookrightarrow GS(k',V')=X'.
$$

If $\phi_{\alpha}$ is given by (\ref{st eq 2b}), then
for an arbitrary $V_k\in X$,
$\phi_{\alpha}(\mathbb{P}(V^*_k))=\mathbb{P}((V^\bot_k\oplus W)^{\bot'}/(V^\bot_k\oplus W))$.
In view of the diagram (\ref{phi Y^2}) $\phi$ is given in this case by the formula
$$
V_k\mapsto V^\bot_k\oplus W
$$
and is a combination of isotropic and standard extensions of the form
$$
X=GS(k,V)\overset{t}\hookrightarrow G(k,V)\overset{\bot}{\simeq}G(2n-k,V)\overset{\phi'}\hookrightarrow
G(l+1,U)\overset{\tau}\hookrightarrow GS(l+1,\tilde{U})\overset{\phi''}\hookrightarrow GS(k',V')=X'.
$$
In this way, (ii) holds under the assumption (a.2).

(a.3) Here $\phi_{\alpha}$ factors through a morphism to some projective space $\mathbb{P}^s$ in $Y'_{\alpha}$,
and we may assume without loss of generality that $\mathbb{P}^s$ is maximal. If $\mathbb{P}^s=
\mathbb{P}({V'}_{k'}^\bot/V'_{k'})$ for some $V'_{k'}\in X'$, then in view of diagram (\ref{phi Y^2}) $\phi$ is the
constant map $V_k\mapsto V'_{k'}$, contrary to the linearity of $\phi$. Hence, $\mathbb{P}^s=
\mathbb{P}((V'_{k'+2})^*)$ for some isotropic subspace $V'_{k'+2}$ of $V'$. On the other hand,
diagram(\ref{phi Y^2}) implies that the projective space
$\mathbb{P}(V_k^*)=p_1(p_2^{-1}(V_k))$ is mapped via $\phi_{\alpha}$ to the projective space
$\mathbb{P}({V'_{k'}}^\bot/V'_{k'})=\bar{p}_1({\bar{p}_2}^{-1}(V'_{k'}))$ for $V'_{k'}=\phi(V_k)$. Thus,
$\phi_{\alpha}(\mathbb{P}(V_k^*))\subset\mathbb{P}((V'_{k'+2})^*)\cap\mathbb{P}({V'_{k'}}^\bot/V'_{k'})$. By Lemma
\ref{intersect dim=1},(ii) this implies $k=2$.  Hence, $X=GS(2,V),\ Y_{\beta}=\mathbb{P}(V)$ and, since $\phi_{\alpha}$
is linear, it is an embedding $\mathbb{P}(V)\hookrightarrow \mathbb{P}((V'_{k'+2})^*)$ corresponding to
a monomorphism $j:V\hookrightarrow(V'_{k'+2})^*$. The above mentioned reconstruction of $\phi$ from $\phi_{\alpha}$
shows now that $\phi$ decomposes as
$$
X=GS(2,V)\overset{t}\hookrightarrow G(2,V)\overset{\tilde{j}}\hookrightarrow
G(2,(V'_{k'+2})^*)\simeq G(k',V'_{k'+2})\overset{\tau}\hookrightarrow GS(k',V')=X',
$$
where $t$ is the tautological
embedding, $\tilde{j}$ is the standard extension corresponding to the monomorphism $j$, and $\tau$ is an isotropic
extension corresponding to an embedding of an isotropic subspace $V'_{k'+2}$ in $V'$. This means that $\phi$ is
a combination of isotropic and standard extensions, i.e. statement (ii) holds.

To complete case (a) it remains to consider the possibility that $\phi_{\alpha}$ is a constant map,
i.e. $\phi_{\alpha}(Y_{\beta})=\{V'_{k'+1}\}$ for some $V'_{k'+1}\subset V'$. Then diagram (\ref{phi Y^2})
implies that $\phi(X)$ lies in the projective space $\mathbb{P}(({V'}_{k'+1})^*)$ on $X'$, i.e. (iii) holds.

We now proceed to the case (b). In this case,
by Lemma \ref{intersect dim=1},(i) each space of the family $\tilde{p}:\tilde{Z}\to Y_{\beta}$ lies in a unique
projective space $\mathbb{P}(V_{k'-1}^\bot/V_{k'-1})$ of the family with base $Y'_{\beta}$. This
means that $p_{\beta}:\Sigma_{\beta}\to Y_{\beta}$ is a bijective morphism, hence an isomorphism as $Y_{\beta}$ is
a smooth variety. Therefore, there is a well-defined morphism
\begin{equation}\label{1mor i}
\phi_{\beta}:=q_{\beta}\circ p_{\beta}^{-1}:\ Y_{\beta}=GS(k-1,V)\to Y'_{\beta}=GS(k'-1,V'),
\end{equation}
and a commutative diagram similar to (\ref{phi Y^2})
\begin{equation}\label{phi Y^1}
\xymatrix{
& \Gamma\ar[rrr]^{\phi_{\Gamma}}\ar[dl]_{p_1}\ar[dr]^{p_2}&&& \Gamma'_{\beta}\ar[dl]_{p'_1}\ar[dr]^{p'_2}&
\\
Y_{\beta}\ar[dr]&& X\ar[dr]& Y'_{\beta} && X',\\
&\ar[r]_{\phi_{\beta}}&\ar[ur]&\ar[r]_{\phi}&\ar[ur]& }
\end{equation}
where $\phi,\ \Gamma,\ p_1,\ p_2,$ are as in (\ref{phi Y^2}),
$\Gamma'_{\beta}$ is the variety of isotropic $(k'-1,k')$-flags in $V'$,
and $\phi_\Gamma,\ p'_1$ and $p'_2$ are the induced projections.

Assume that $\phi_{\beta}$ is a non-constant morphism.
Then $\phi_{\beta}$ is linear, and the proof is similar to that of the linearity of $\phi_{\alpha}$.
Indeed, consider the diagram analogous to (\ref{curves}) with $\phi_{\alpha},\bar{p}_1,\bar{p}_2$ replaced respectively
by $\phi_{\beta},p'_1,p'_2$. By essentially the same argument as above, this is a commutative diagram of isomorphisms.
The fact that $\phi_{\beta}(\mathbb{P}^1_{Y_{\beta}})$ is a projective line on $Y'_{\beta}$ follows from Lemma
\ref{family CS},(i) for the data $\mathbb{P}^1=\phi(\mathbb{P}^1_X)$,
$x=\phi(\Span(\underset{V_{k-1}\in\mathbb{P}^1}\cup V_{k-1}))$, $C=\phi_{\beta}(\mathbb{P}^1_{Y_{\beta}})$.

The morphism $\phi_{\beta}$ maps a projective space $\mathbb{P}(V_k^*)$ to a unique projective space,
and thus reconstructs $\phi$ in an obvious way.

Now, by the induction assumption, (b.1) $\phi_{\beta}$ is a standard extension, or (b.2) $\phi_{\beta}$ is a
combination of isotropic and standard extensions, or (b.3) $\phi_{\beta}$ factors through a linear morphism into some
projective space $\mathbb{P}^s$ in $Y'_{\beta}$. Consider these three cases (b.1)-(b.3).

(b.1) In this case $\phi_{\beta}$ is a standard extension.
Using the reconstruction of $\phi$ via $\phi_{\beta}$ mentioned above, one immediately sees that $\phi$ is also a
standard extension.

(b.2) In this case $\phi_{\beta}$ is a combination of isotropic and standard extensions, and, using the reconstruction
of $\phi$ via $\phi_{\beta}$, the reader will check that $\phi$ also is a combination of isotropic and standard
extensions.

(b.3) In this case $\phi_{\beta}$ factors through a linear morphism of $Y_{\beta}$ into some maximal projective space
$\mathbb{P}^s$ on $Y'_\beta$.
Then $\mathbb{P}^s=\mathbb{P}^s_\beta:=\mathbb{P}({V'}_{k'-2}^\bot/V'_{k'-2})$ for some $V'_{k'-2}\subset V'$, or
$\mathbb{P}^s=\mathbb{P}^s_{\alpha}:=G(k'-1,V'_{k'})$ for some isotropic subspace $V'_{k'}\subset V'$.
The second case is clearly impossible because it would imply that
$\phi$ maps $X$ into the single point $V'_{k'}$, contrary to linearity of $\phi$. Hence,
$\mathbb{P}^s=\mathbb{P}^s_{\beta}$.

Fix $V_k\in X$ and set $V'_{k'}:=\phi(V_k).$ Diagram (\ref{phi Y^1}) shows that the projective
space $\mathbb{P}(V_k^*)=p_1(p_2^{-1}(V_k))$ is embedded by $\phi_{\beta}$ into the
intersection of the maximal projective spaces $\mathbb{P}^s_{\beta}$ and
$\mathbb{P}^{k'-1}_{\alpha}:=\mathbb{P}((V'_{k'-1})^*)=p'_1({p'}_2^{-1}(V'_{k'}))$ in $Y'_{\beta}$.
By Lemma \ref{intersect dim=1},(ii) this implies $k=2$,
i.e. $X=GS(2,V),\ Y_{\beta}=\mathbb{P}(V)$, and
$\phi_{\beta}:\ \mathbb{P}(V)\to\mathbb{P}^s_{\beta}=\mathbb{P}({V'}_{k'-2}^\bot/V'_{k'-2})$ is a linear embedding
induced by a certain monomorphism $f:V\to{V'}_{k'-2}^\bot/V'_{k'-2}$.
Diagram (\ref{phi Y^1}) shows now that $\phi$ is the composition
$$
X=GS(2,V)\overset{i}\to GS(2,{V'}_{k'-2}^\bot/V'_{k'-2})\overset{\tilde{\phi}}\hookrightarrow GS(k',V')=X',
$$
where $i$ is induced by $f$ and $\tilde{\phi}$ is the standard extension corresponding to the flag
${V'}_{k'-2}\subset{V'}_{k'-2}^\bot$ in $V'$. Being a composition of standard extensions, $\phi$ is itself a standard
extension, i.e. (i) holds.

To complete the proof in the symplectic case it remains to consider the possibility the $\phi_{\beta}$
is a constant morphism. Let $\phi_{\beta}(Y_{\beta})=\{V'_{k'-1}\}$ for some $V'_{k'-1}\subset V'$.
Then $\phi(X)$ lies in the projective space $\mathbb{P}({V'}_{k'-1}^\bot/V'_{k'-1})$ on $X'$, i.e. (iii) holds.

\vspace{2mm}
We now briefly outline the changes needed in the proof for the orthogonal case. The main idea is to replace the family
of projective spaces $PS_{\beta}(k,V)$ by the family of maximal quadrics $QO_{\beta}(k,V)$ on $X$. Note first that the
image of a quadric $Q^{\dim V-2k}_{\beta}$ under a linear morphism is either a quadric or a projective space.
Using this and the additional conditions imposed on $k,k',\dim V,\dim V'$
we show that $\phi$ induces a well defined linear morphism of the form
\begin{equation}\label{phi a}
\phi_{\alpha}: QO_{\beta}(k,V)=GO(k-1,V)\to GO(k'+1,V')
\end{equation}
or
\begin{equation}\label{phi b}
\phi_{\beta}: QO_{\beta}(k,V)=GO(k-1,V)\to GO(k'-1,V').
\end{equation}
The above conditions ensure that $\phi$ does not map maximal quadrics of the form $Q^{\dim V-2k}_{\beta}$ into maximal
quadrics of the form $Q^4_{\gamma}$.

The linearity of $\phi_{\alpha}$ and $\phi_{\beta}$, provided that they are
non-constant morphisms, is proved by arguments similar to the above using Lemma \ref{family CO} instead of Lemma
\ref{family CS}.
The rest of the proof goes along the same lines as in the symplectic case. When working with maximal quadrics
$Q^{\dim V-2k}_{\beta}$ on $GO(k,V)$ instead of maximal projective spaces $\mathbb{P}^{2n-2k+1}_{\beta}$ on $GS(k,V)$,
one uses Lemmas \ref{Bkn},(iv) and \ref{two possib},(ii) instead of Lemmas \ref{Bkn sympl},(ii) and
\ref{intersect dim=1},(ii).

\vspace{2mm}
Finally, we leave the case $X\simeq G(k,V)$ and $X'\simeq G(k',V')$ entirely to the reader.
\end{proof}

\begin{corollary}\label{new 3.8}
\textit{Let $X\simeq G(k,V)$, $X'\simeq G(k',V')$, or $X=GO(k,V),\ X'=GO(k',V')$, or
$X=GS(k,V),\ X'=GS(k',V')$, and let $\phi: X \to X'$ be a linear morphism.
If $X=GO(k,V),\ X'=GO(k',V')$, assume in addition that either $k\le[\frac{\dim V}{2}]-3$ and
$k'\le[\frac{\dim V'}{2}]-3$, or that $[\frac{\dim V'}{2}]-k'\le[\frac{\dim V}{2}]-k\le2$ and both $\dim V$ and
$\dim V'$ are odd. Then $\phi$ is an embedding unless it factors through a projective space on $X'$
or through a maximal quadric when $X'=GO(k',V')$.}
\end{corollary}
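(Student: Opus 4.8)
The plan is to read this off Theorem 1 with essentially no extra work, since its trichotomy already isolates the one situation in which $\phi$ can fail to be an embedding. Under the stated hypotheses Theorem 1 tells us that $\phi$ is (i) a standard extension, or (ii) a combination of isotropic and standard extensions (when $X$ and $X'$ are isotropic Grassmannians), or (iii) a morphism factoring through a projective space on $X'$ --- or, when $X'=GO(k',V')$, through a maximal quadric $Q_{\beta}^{\dim V'-2k'}$. Alternative (iii) is precisely the exceptional case permitted in the corollary, so it suffices to show that in cases (i) and (ii) the morphism $\phi$ is automatically an embedding; the statement then follows by contraposition.

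First I would dispose of case (i). A standard extension is a closed immersion: this was recorded right after Definition \ref{standard gen} for ordinary Grassmannians, and the same holds in the isotropic setting of Definition \ref{lin mor isotr}, since the defining rule $V_k \mapsto V_k \oplus W$ is injective on subspaces and realizes $X$ as a closed subvariety of $X'$. Thus nothing remains to check when $\phi$ is of type (i).

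Next I would handle case (ii). By Definition \ref{isotr st ext} such a $\phi$ is, by construction, a composition
$$
X \overset{t}{\hookrightarrow} G(k,V) \overset{\phi'}{\hookrightarrow} G(l,U) \overset{\tau}{\hookrightarrow} GO(l,\tilde{U}) \overset{\phi''}{\hookrightarrow} X'
$$
(and symmetrically with $GS$ in place of $GO$), where $t$ is a tautological embedding, $\phi'$ and $\phi''$ are standard extensions, and $\tau$ is an isotropic extension. Each of these four maps is a closed immersion --- $t$ and $\tau$ by definition, $\phi'$ and $\phi''$ by the observation used in case (i) --- and a composition of closed immersions is again a closed immersion, so $\phi$ is an embedding in this case as well.

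Putting the pieces together: if $\phi$ fails to be an embedding, then by the previous two steps it can be neither of type (i) nor of type (ii), so Theorem 1 forces case (iii), i.e.\ $\phi$ factors through a projective space on $X'$, or through a maximal quadric $Q_{\beta}^{\dim V'-2k'}$ when $X'=GO(k',V')$. I do not expect any genuine obstacle here; the only point meriting a moment's care is confirming that every constituent map in the factorization of case (ii) is a closed immersion, after which closedness of the composition is a formal consequence.
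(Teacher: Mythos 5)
Your proposal is correct and is exactly how the paper intends this corollary to be read: the paper gives no separate proof, because under its definitions a standard extension and a combination of isotropic and standard extensions are embeddings by definition (Definitions \ref{standard gen}, \ref{lin mor isotr}, \ref{isotr st ext}), so Theorem 1's trichotomy immediately yields that a non-embedding must fall into case (iii). Your extra verification that the constituent maps are closed immersions is harmless but redundant given those definitions.
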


\begin{corollary}\label{3.8}
\textit{Let $X\simeq G(k,V)$, $X'\simeq G(k',V')$, or $X=GO(k,V),\ X'=GO(k',h')$, or
$X=GS(k,V),\ X'=GS(k',V')$, and let $\phi: X \to X'$ be a linear embedding.
If $X=GO(k,V),\ X'=GO(k',V')$, assume in addition that either $k\le[\frac{\dim V}{2}]-3$ and
$k'\le[\frac{\dim V'}{2}]-3$, or that $[\frac{\dim V'}{2}]-k'\le[\frac{\dim V}{2}]-k\le2$ and both $\dim V$ and
$\dim V'$ are odd. Then some of the following statements holds:\\
(i) $\phi$ is a standard extension;\\
(ii) $X$ and $X'$ are isotropic Grassmannians and $\phi$ is a combination of isotropic and standard
extensions;\\
(iii) $\phi$ factors through a projective space on $X'$ or, in case $X'=GO(k',V')$, through a maximal quadric
$Q_{\beta}^{\dim V'-2k'}$.}
\end{corollary}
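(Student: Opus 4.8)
The plan is to deduce this corollary directly from Theorem 1, observing that it is essentially a restatement of that result under a formally stronger hypothesis. I would first check that the hypotheses of Corollary \ref{3.8} coincide verbatim with those of Theorem 1: the admissible cases for $X,X'$ are the same, and in the orthogonal situation the additional numerical restrictions (either $k\le[\frac{\dim V}{2}]-3$ and $k'\le[\frac{\dim V'}{2}]-3$, or $[\frac{\dim V'}{2}]-k'\le[\frac{\dim V}{2}]-k\le2$ with both $\dim V$ and $\dim V'$ odd) are identical. The only change is that here $\phi$ is assumed to be a linear \emph{embedding} rather than merely a linear \emph{morphism}.

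Since every embedding is in particular a morphism, a linear embedding $\phi\colon X\to X'$ is a linear morphism in the sense of Definition \ref{lin mor}. Hence Theorem 1 applies without modification and produces exactly the trichotomy (i)--(iii). As the three alternatives listed in Corollary \ref{3.8} are literally the conclusions (i)--(iii) of Theorem 1, this already establishes the corollary.

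For completeness I would note that each alternative is consistent with $\phi$ being an embedding: in case (i) a standard extension is a linear embedding by the discussion following Definition \ref{standard gen}; in case (ii) a combination of isotropic and standard extensions is an embedding by construction (Definition \ref{isotr st ext}); and in case (iii) the factorization $X\to\mathbb{P}\hookrightarrow X'$ (respectively $X\to Q_{\beta}^{\dim V'-2k'}\hookrightarrow X'$ when $X'=GO(k',V')$) has its first arrow an embedding precisely because $\phi$ is. No step here presents a genuine obstacle; the only point demanding care is verifying that the numerical hypotheses in the orthogonal case match those of Theorem 1 exactly, so that the latter can be invoked verbatim.
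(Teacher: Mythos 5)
Your proposal is correct and coincides with the paper's treatment: Corollary \ref{3.8} is stated there without proof precisely because, as you observe, a linear embedding is in particular a linear morphism satisfying the identical hypotheses of Theorem 1, whose conclusions (i)--(iii) are verbatim those of the corollary. Your extra consistency check of the three alternatives is harmless but not needed.
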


\begin{remark}\label{X,X'}
Note that if $X\simeq G(k,V)$, $X'\simeq G(k',V')$ and $\phi:X\to X'$ is an embedding, the
statement of
Corollary \ref{3.8} simplifies as follows: $\phi$ is either a standard extension, or factors through a projective
space on $X'$ (cf. Proposition 3.1 in \cite{PT1}).
\end{remark}

\begin{remark}\label{X'}
If $X=GS(k,V)$, $X'=GS(\frac{\dim V'}{2},V')$ and $\phi:X\to X'$ is a linear morphism, then $k=\frac{\dim V}{2}$.
This follows easily from Lemmas \ref{Bkn sympl},(iii) and \ref{finite}.
\end{remark}

We will also need the following partial extension of Theorem 1.

\begin{proposition}\label{case n-2 etc}
Let $\dim V=2n\ge10$, $\dim V'=2n'$ and $\phi:X=GO(n-2,V)\to X'=GO(n'-2,V')$ be a linear embedding.
Then some of the following statements holds:\\
(i) $\phi$ is a standard extension;\\
(ii) $X$ and $X'$ are isotropic Grassmannians and $\phi$ is a combination of isotropic and standard
extensions;\\
(iii) $\phi$ factors through a projective space on $X'$, through a maximal quadric $Q_{\beta}^{\dim V'-2k'}$, or
through the Grassmannian $G(n'-2,V'_{n'})\subset X'$ for a maximal isotropic subspace $V'_{n'}$ of $V'$.
\end{proposition}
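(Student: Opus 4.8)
The plan is to mirror the orthogonal case of Theorem~1, running the same argument through the family of maximal $4$-dimensional quadrics $Q^{4}_{\beta}=QO_{\beta}(n-2,V)$ on $X=GO(n-2,V)$, whose base is $Y_{\beta}:=GO(n-3,V)$. The point of the hypothesis $\dim V=2n\ge10$ is that $n-3=[\frac{\dim V}{2}]-3$, so that any non-constant linear morphism out of $GO(n-3,V)$ falls under the first alternative in the hypotheses of Theorem~1; this lets me bootstrap from Theorem~1 rather than set up a fresh induction. Since $\phi$ is a linear embedding, each $\phi(Q^{4}_{\beta})$ is again a $4$-dimensional quadric, or a projective space, on $X'$, and by Lemma~\ref{Bkn}(vi) a $4$-dimensional image quadric is either a maximal $Q^{4}_{\beta}$, a maximal $Q^{4}_{\gamma}$, or lies in a projective space on $X'$. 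As in Theorem~1 I would form the closed loci in $Y_{\beta}$ of those $V_{n-3}$ for which $\phi(Q^{4}_{\beta})$ is of a given type; since these loci cover the irreducible variety $Y_{\beta}$, one of them equals $Y_{\beta}$, which fixes the generic type of the image quadric.

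The genuinely new feature, and the reason $k=n-2$ is excluded from Theorem~1, is that here $\dim V-2k=4$, so the image quadrics may now be of type $\gamma$; this is precisely the branch producing the third alternative in (iii). In the type-$\beta$ branch I would proceed exactly as in Theorem~1: the quadrics determine a morphism $\phi_{\beta}\colon GO(n-3,V)\to GO(n'-3,V')$ (using $QO_{\beta}(n'-2,V')\simeq GO(n'-3,V')$ from Lemma~\ref{Bkn}(iv)), I check its linearity in the non-constant case by the Lemma~\ref{family CO} argument used for $\phi_{\beta}$ in Theorem~1, apply Theorem~1 to $\phi_{\beta}$ (its indices satisfy the first hypothesis there), and reconstruct $\phi$ from $\phi_{\beta}$ through the incidence diagram. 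This yields (i), (ii), or a factorization through a projective space or a $Q^{4}_{\beta}$, while a constant $\phi_{\beta}$ forces $\phi(X)$ into a projective space, i.e. (iii).

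The decisive branch is when the generic image quadric is of type $\gamma$, together with the parallel sub-case in which it lies in a maximal $\alpha$-space $\mathbb{P}((V'_{n'-1})^{*})$ (this must be handled the same way, since $GO(n'-1,V')$ is excluded as a target of Theorem~1). Their common feature is geometric: every point of a $\gamma$-quadric $\{V'_{n'-2}\mid V'_{n'-4}\subset V'_{n'-2}\subset V'_{n'}\}$, and every point of such an $\alpha$-space, is an $(n'-2)$-subspace of one fixed maximal isotropic $V'_{n'}$ of $V'$. Thus for each $V_{n-3}\in Y_{\beta}$ there is a maximal isotropic $V'_{n'}(V_{n-3})$ with $\phi\bigl(Q^{4}_{\beta}(V_{n-3})\bigr)\subset G(n'-2,V'_{n'}(V_{n-3}))$, and the whole proposition reduces to showing that $V'_{n'}(V_{n-3})$ does not depend on $V_{n-3}$: then $\phi(X)\subset G(n'-2,V'_{n'})$ and (iii) holds.

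Establishing this constancy is the main obstacle, and I would attack it by a connectedness argument in the spirit of Lemma~\ref{family CO}. Along any ruling line $\{V_{n-3}\mid V_{n-4}\subset V_{n-3}\subset V_{n-2}\}$ of $Y_{\beta}$ all the quadrics $Q^{4}_{\beta}(V_{n-3})$ contain the common point $V_{n-2}$, so their images all contain the fixed isotropic $(n'-2)$-space $\phi(V_{n-2})$; hence all the corresponding maximal isotropics $V'_{n'}(V_{n-3})$ contain $\phi(V_{n-2})$. Now the maximal isotropics of $V'$ containing a fixed isotropic $(n'-2)$-space $W'$ correspond to the maximal isotropics of the $4$-dimensional orthogonal space $W'^{\bot}/W'$, which form two disjoint $\mathbb{P}^{1}$'s; so along each line $V'_{n'}(\cdot)$ varies within a single $\mathbb{P}^{1}$. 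Letting the point $V_{n-2}$ and the line vary, the assigned maximal isotropic is forced into the intersection of these $\mathbb{P}^{1}$'s attached to different fixed $(n'-2)$-spaces, which collapses it to a point; since $Y_{\beta}=GO(n-3,V)$ is connected and covered by such lines, $V'_{n'}(V_{n-3})$ is globally constant. The two-component structure of the maximal isotropics and the injectivity of $\phi$ are exactly what I expect to need in order to exclude a genuinely moving family, and I anticipate this collapsing step to be the most delicate point of the argument.
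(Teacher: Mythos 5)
Your overall architecture does match the paper's: the trichotomy on where the image $\beta$-quadrics land (the paper's three morphisms $\phi_{\alpha}$, $\phi_{\beta}$, $\phi_{\gamma}$ into $PO_{\alpha}(n'-2,V')$, $GO(n'-3,V')$, $QO_{\gamma}(n'-2,V')$), the $\beta$-branch handled by applying Theorem~1 to $GO(n-3,V)\to GO(n'-3,V')$ and reconstructing $\phi$, and the $\gamma$-branch reduced to showing that the assignment of a maximal isotropic subspace is constant. But the constancy step --- which you yourself flag as the crux --- is not proved, and the ``collapsing'' mechanism you propose does not work. For a \emph{fixed} $V_{n-3}$, intersecting the sets $S(U_{n-2})$ of maximal isotropics containing $\phi(U_{n-2})$ over all $U_{n-2}\in Q^4_{\beta}(V_{n-3})$ merely recovers the span of $\phi(Q^4_{\beta}(V_{n-3}))$, i.e.\ the value $V'_{n'}(V_{n-3})$ itself: this is uniqueness of the value at one point, not agreement of values at different points. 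And for two distinct points $V_{n-3},V_{n-3}'$ on a common line, i.e.\ two hyperplanes of a common isotropic $U_{n-2}$, the \emph{only} shared constraint your argument produces is that both assigned maximal isotropics contain $\phi(U_{n-2})$; this confines each of them to the two disjoint $\mathbb{P}^1$'s of maximal isotropics through $\phi(U_{n-2})$, but nothing forces them to be the same point --- their equality is precisely the assertion that the spans of the two image quadrics coincide, which is essentially what has to be proven. Connectedness of $GO(n-3,V)$ together with the two-component structure of $GO(n',V')$ can at best pin down the connected component (and even that requires first knowing the assignment is a morphism, which you do not establish); within one component a non-constant algebraic family remains entirely possible at this stage, since by Lemma \ref{finite} a non-constant assignment would simply be finite, not contradictory.

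The paper closes exactly this gap by a global dimension count rather than a line-by-line argument. It parametrizes by maximal isotropic subspaces $V_n$ of $V$ (realized as $Y=GO(n-1,\tilde{V})$ for a hyperplane $\tilde{V}$ on which $\Phi$ is non-degenerate), defines $\phi_Y(V_n)$ as the unique maximal isotropic $V'_{n'}$ with $\phi(G(n-2,V_n))\subset G(n'-2,V'_{n'})$ (existence and uniqueness via Theorem~1 applied to $G(n-2,V_n)\to G(n'-2,V')$), and then argues: if $\phi_Y$ is not constant it is finite by Lemma \ref{finite}; finiteness forces $\phi_{\bar{\Gamma}}$ to map fibres of $\pi_2$ \emph{onto} fibres of $\pi'_2$; but then, for a fixed $V_{n-3}$, the variety $\pi_1(\pi_2^{-1}(Q^4_{\beta}(V_{n-3})))\simeq\mathbb{P}^3$ would have to dominate $\pi'_1({\pi'}_2^{-1}(Q^4_{\gamma}))$, which is $5$-dimensional --- absurd. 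Note that this contradiction uses the $\gamma$-type of the image quadrics in an essential way (it is where the $5$ comes from), whereas your proposal lumps the $\alpha$-sub-case (images inside $\mathbb{P}((V'_{n'-1})^*)$) together with the $\gamma$-case; that folding is legitimate for concluding statement (iii), but it makes your assignment multivalued and obscures the fact that the paper disposes of the $\alpha$-case separately and much more cheaply (the restriction of $\phi_{\Gamma}$ to a fibre $\mathbb{P}(V_{n-2}^*)$ lands in $\mathbb{P}^1\times\mathbb{P}^1$, hence is constant since $n\ge5$, so $\phi_{\alpha}$ is constant and $\phi(X)$ lies in a projective space). Some global argument of the paper's strength appears unavoidable at the constancy step, and your sketch does not contain one.
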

\begin{proof}
Considering the image of the family $QO_{\beta}(n-2,V)$ under $\phi$, we see similarly to the proof of Theorem 1,
that at least one of the following morphisms
\begin{equation}\label{phi alpha}
\phi_{\alpha}: QO_{\beta}(n-2,V)=GO(n-3,V)\to PO_{\alpha}(n'-2,V'),
\end{equation}
\begin{equation}\label{phi beta}
\phi_{\beta}: QO_{\beta}(n-2,V)=GO(n-3,V)\to GO(n'-3,V'),
\end{equation}
\begin{equation}\label{phi gamma}
\phi_{\gamma}: QO_{\beta}(n-2,V)=GO(n-3,V)\to QO_{\gamma}(n'-2,V')
\end{equation}
must be well defined.

Assume that $\phi_{\alpha}$ is well defined. Then one sees that an obvious analog of diagram (\ref{phi Y^2})
applies also in the case we consider here. Set $V'_{n'-2}:=\phi(V_{n-2})$ for $V_{n-2}\in X$.
Note that $p_2^{-1}(V_{n-2})=\mathbb{P}(V_{n-2}^*)$is mapped under $\phi_{\Gamma}$ into
${p'}_2^{-1}(V'_{n'-2})\simeq\mathbb{P}^1\times\mathbb{P}^1$. Since $n\ge5$,
this map is a constant map. Hence $\phi_{\alpha}$ maps the projective space $\mathbb{P}(V_{n-2}^*)$ into a point.
Lemma \ref{finite} implies now that $\phi_{\alpha}$ is a constant map.
i.e. $\phi_{\alpha}(QO_{\beta}(n-2,V))=\{V'_{n'-1}\}$ for some $V'_{n'-1}\subset V'$. Then the analog of diagram
(\ref{phi Y^2}) implies that $\phi(X)$ lies in the projective space $\mathbb{P}((V'_{n'-1})^*)$ on $X'$, i.e.
statement (iii) holds.

Next, if $\phi_{\beta}$ is well defined, then one applies Theorem 1 to $\phi_{\beta}$ and recovers $\phi$ from
$\phi_{\beta}$ as in the proof of Theorem 1.

In the remainder of the proof we assume that $\phi_{\gamma}$ is well defined. We start by constructing a diagram
analogous to (\ref{phi Y^2}):
\begin{equation}\label{phi Y^3}
\xymatrix{
& \bar{\Gamma}\ar[rrr]^{\phi_{\bar{\Gamma}}}\ar[dl]_{\pi_1}\ar[dr]^{\pi_2} &&&
\bar{\Gamma}'\ar[dl]_{\pi'_1}\ar[dr]^{\pi'_2}& \\
Y\ar[dr]&& X\ar[dr]& Y' && X'.\\
&\ar[r]_{\phi_Y}&\ar[ur]&\ar[r]_{\phi}&\ar[ur]& }
\end{equation}
By definition, $\bar{\Gamma}$ is a fixed connected component of the variety of isotropic $(n-2,n)$-flags in $V$, and
$\bar{\Gamma}'$ is a fixed connected component of the variety of isotropic $(n'-2,n')$-flags in $V$. Next, we define
$Y$. For this we fix codimension 1 subspace $\tilde{V}$ in $V$ such that the symmetric form $\Phi|_{\tilde{V}}$ is
non-degenerate, and set $Y:=GO(n-1,\tilde{V})$. Similarly we define $Y'$ as $GO(n'-1,\tilde{V}')$. The projections
$\pi_1$, $\pi_2$, $\pi'_1$, $\pi'_2$ are as follows:
$\pi_1:(V_{n-2}\subset V_n)\mapsto V_n\cap\tilde{V}$, $\pi_2:(V_{n-2}\subset V_n)\mapsto V_{n-2}$,
$\pi'_1:(V'_{n'-2}\subset V'_{n'})\mapsto V'_{n'}\cap\tilde{V}'$, $\pi'_2:(V'_{n'-2}\subset V'_{n'})\mapsto V'_{n'-2}$.
To define the morphisms  $\phi_Y$ and $\phi_{\bar{\Gamma}}$, consider
a point $V_n\cap\tilde{V}\in Y$. By construction, the fibre $\pi_1^{-1}(V_n\cap\tilde{V})$ is isomorphic to the
Grassmannian $G(n-2,V_n)$ which is isomorphically mapped onto $\pi_2(G(n-2,V_n))$. The composition
$G(n-2,V_n)\overset{\pi_2}\to \pi_2(G(n-2,V_n))\overset{\phi}\hookrightarrow X'
\overset{t} \hookrightarrow G(n'-2,V')$, where $t$ is the tautological embedding, is a linear embedding of
Grassmannians, hence by Theorem 1 it is either a standard extension or factors through an embedding into a projective
space. In both cases one sees that there is a unique isotropic subspace $V'_{n'}$ of $V'$ such that
$(\phi\circ\pi_2)(G(n-2,V_n))\subset G(n'-2,V'_{n'})$. Define now $\phi_Y:Y\to Y'$ by setting
$\phi_Y(V_n\cap\tilde{V})=V'_{n'}\cap\tilde{V}'$. The morphism $\phi_{\bar{\Gamma}}:\bar{\Gamma}\to\bar{\Gamma}'$ is
then recovered by the commutativity of diagram (\ref{phi Y^3}).

Assume now that the morphism $\phi_Y$ is finite. Consider a point $V_{n-2}\in X$ and set $V'_{n'-2}=\phi(V_{n-2})$.
By diagram (\ref{phi Y^3}) the projective line $\mathbb{P}^1:=\pi_1(\pi_2^{-1}(V_{n-2}))$ on $Y$ is mapped into the
projective line $\mathbb{P}'^1:=\pi'_1({\pi'}_2^{-1}(V'_{n'-2}))$ on $Y'$. Since the morphism $\phi_Y|_{\mathbb{P}^1}$
is finite, it follows that this morphism is surjective. This implies that the morphism
$\phi_{\bar{\Gamma}}:\bar{\Gamma}\to\bar{\Gamma}'$ maps fibres of  $\pi_2$ onto fibres of $\pi'_2$.

Next, fix a point $V_{n-3}\in GO(n-3,V)$. The maximal quadric $GO(1,V_{n-3}^\bot/V_{n-3})$ is mapped by $\phi$ onto the
quadric $Q^4_{\gamma}$ corresponding to the isotropic flag $\phi_{\gamma}(V_{n-3})$. Consequently, according to
the above stated property of $\phi_{\bar{\Gamma}}$ the variety $\pi_2^{-1}(GO(1,V_{n-3}^\bot/V_{n-3}))$ is mapped by
$\phi_{\bar{\Gamma}}$ onto the variety ${\pi'}_2^{-1}(Q^4_{\gamma})$. Hence
$\pi_1(\pi_2^{-1}(GO(1,V_{n-3}^\bot/V_{n-3})))$ is mapped by $\phi_Y$ onto $\pi'_1({\pi'}_2^{-1}(Q^4_{\gamma}))$.
However, one can check that the variety $\pi_1(\pi_2^{-1}(GO(1,V_{n-3}^\bot/V_{n-3})))$ is isomorphic to $\mathbb{P}^3$,
while the variety $\pi'_1({\pi'}_2^{-1}(Q^4_{\gamma}))$ is 5-dimensional. This is a contradiction.

Hence $\phi_Y$ is not finite, and Lemma \ref{finite} implies that $\phi_Y$ is a constant map. Set $V'_{n'}=\phi_Y(Y)$.
Then diagram (\ref{phi Y^3}) yields that $\phi(X)\subset \pi'_2({\pi'}_1^{-1}(V'_{n'-2}))=G(n'-2,V'_{n'})$, and
statement (iii) holds.
\end{proof}

\vspace{1cm}

\section{Linear ind-Grassmannians}\label{stand Gr}

\vspace{1cm}

\noindent

Recall that an \textit{ind-variety} is the direct limit
$\mathbf{X}=\underset{\longrightarrow}\lim X_m$
of a chain of morphisms of algebraic varieties
\begin{equation}\label{eq1}
X_1\stackrel{\phi_1}{\to}X_2\stackrel{\phi_2}{\to}\dots\stackrel{\phi_{m-1}}{\to}X_m\stackrel{\phi_m}{\to}
X_{m+1}\stackrel{\phi_{m+1}}{\to}\dots\ .
\end{equation}
Note that the direct limit of the chain (\ref{eq1})
does not change if we replace the sequence
$\{X_m\}_{m\ge1}$ by a subsequence $\{X_{i_m}\}_{m\ge1}$ and the morphisms $\phi_m$ by the compositions
$\tilde{\phi}_{i_m}:={\phi}_{i_{m+1}-1}\circ...\circ{\phi}_{i_m+1}\circ{\phi}_{i_m}$.
Let $\mathbf{X}$ be the direct limit of (\ref{eq1}) and  $\mathbf{X}'$ be the direct limit of a chain
\begin{equation}\label{eq1'}
X'_1\stackrel{\phi'_1}{\to}X'_2\stackrel{\phi'_2}{\to}\dots\stackrel{\phi'_{m-1}}{\to}X'_m\stackrel{\phi'_m}{\to}
X'_{m+1}\stackrel{\phi'_{m+1}}{\to}\dots\ .
\end{equation}
A {\it morphism of ind-varieties} $\mathbf{f}:\mathbf{X}\to\mathbf{X}'$ is a map from
$\underset{\to}\lim X_n$ to $\underset{\to}\lim X'_n$ induced by a collection of morphisms of algebraic varieties
$\{f_m:X_m\to Y_{n_m}\}_{m\ge1}$
such that $\psi_{n_m}\circ f_m=f_{m+1}\circ\phi_m$ for all $m\ge1$.
The identity morphism $\mathrm{id}_\mathbf{X}$ is a morphism which induces the identity as a set-theoretic map from
$\mathbf{X}$ to $\mathbf{X}'$.
A morphism $\mathbf{f}:\mathbf{X}\to\mathbf{X}'$ is an \textit{isomorphism} if there exists a morphism
$\mathbf{g}:\mathbf{X}'\to\mathbf{X}$ such that $\mathbf{g}\circ\mathbf{f}=\mathrm{id}_\mathbf{X}$ and
$\mathbf{f}\circ\mathbf{g}=\mathrm{id}_{\mathbf{X}'}$.

In what follows we only consider chains (\ref{eq1}) such that $X_m$ are complete algebraic varieties,
$\underset{n\to\infty}\lim(\dim X_n)=\infty$, and the morphisms $\phi_m$ are embeddings. We call such ind-varieties
\textit{locally complete}. Furthermore, we call a morphism
$\mathbf{f}:\mathbf{X}=\underset{\to}\lim X_n\to\mathbf{X}'=\underset{\to}\lim X'_n$ of locally complete
ind-varieties an \textit{embedding} if all morphisms $f_m:X_m\to X'_{n_m},\ m\ge1,$ are embeddings.
\begin{definition}\label{lin ind-Gr}
A \textit{linear ind-Grassmannian} is an ind-variety $\mathbf{X}$ obtained as a direct limit of a chain of embeddings
$$
X_1\overset{\phi_1}{\hookrightarrow}X_2\overset{\phi_2}{\hookrightarrow}\dots\overset{\phi_{m-1}}{\hookrightarrow}
X_m\overset{\phi_m}{\hookrightarrow}X_{m+1}\overset{\phi_{m+1}}{\hookrightarrow}\dots
$$
where each $X_m$ is a Grassmannian or an isotropic Grassmannian, $\underset{n\to\infty}\lim(\dim X_n)=\infty$,
and all embeddings $\phi_m$ are linear morphisms.
\end{definition}
Note that Definition \ref{lin ind-Gr} allows for a "mixture" of all three types of Grassmannians (usual Grassmannians,
orthogonal Grassmannians, symplectic Grassmannians). Note also that when considering orthogonal Grassmannians we
restrict ourselves to connected orthogonal Grassmannians with Picard group isomorphic to $\mathbb{Z}$, see
\ref{linear ortho ind-Grassm}.

We now define certain standard Grassmannians and isotropic Grassmannians.

\begin{definition}\label{G(k)}
Fix an infinite chain of vector spaces
$$
V_{n_1}\subset V_{n_2}\subset...\subset V_{n_m}\subset V_{n_{m+1}}\subset...
$$
of dimensions $n_m,\ n_m< n_{m+1}$.

a) For an integer $k$, $1\le k< n_1$, set $\mathbf{G}(k):=\underset{\to}\lim G(k,V_{n_m})$ where
$$
G(k,V_{n_1})\hookrightarrow G(k,V_{n_2})\hookrightarrow...\hookrightarrow G(k,V_{n_m})\hookrightarrow
G(k,V_{n_{m+1}})\hookrightarrow...
$$
is the chain of canonical inclusions of Grassmannians.

b) For a sequence of integers $1\le k_1< k_2<...$ such that $ k_m< n_m,$  $\underset{m\to\infty}\lim(n_m-k_m)=\infty$,
set
$\mathbf{G}(\infty):=\underset{\to}\lim G(k_m,V_{n_m})$ where
$$
G(k_1,V_{n_1})\hookrightarrow G(k_2,V_{n_2})\hookrightarrow...\hookrightarrow G(k_m,V_{n_m})\hookrightarrow
G(k_{m+1},V_{n_{m+1}})\hookrightarrow...
$$
is an arbitrary chain of standard extensions of Grassmannians.

c) Assume that $V_{n_m}$ are endowed with compatible non-degenerate symmetric (respectively, symplectic) forms $\Phi_m$.
In the symplectic case $\frac{1}{2}n_m\in\mathbb{Z}_+$.
For an integer $k,\ 1\le k\le[\frac{n_1}{2}]$, set
 $\mathbf{G}\mathrm{O}(k,\infty):=\underset{\to}\lim GO(k,V_{n_m})$ (respectively,
 $\mathbf{G}\mathrm{S}(k,\infty):=\underset{\to}\lim GS(k,V_{n_m})$) where
$$
GO(k,V_{n_1})\hookrightarrow GO(k,V_{n_2})\hookrightarrow...\hookrightarrow GO(k,V_{n_m})\hookrightarrow
GO(k,V_{n_{m+1}})\hookrightarrow...
$$
(respectively,
$$
GS(k,V_{n_1})\hookrightarrow GS(k,V_{n_2})\hookrightarrow...\hookrightarrow GS(k,V_{n_m})\hookrightarrow
GS(k,V_{n_{m+1}})\hookrightarrow...)
$$
is the chain of canonical inclusions of isotropic Grassmannians.

d) For a sequence of integers $1\le k_1< k_2<...$ such that $k_m<[\frac{n_m}{2}],$
$\underset{m\to\infty}\lim([\frac{n_m}{2}]-k_m)=\infty$,
set $\mathbf{G}\mathrm{O}(\infty,\infty)=\underset{\to}\lim G\mathrm{O}(k_m,V_{n_m})$
(respectively, $\mathbf{G}\mathrm{S}(\infty,\infty):=\underset{\to}\lim GS(k_m,V_{n_m})$)
where
\begin{equation}\label{chain O}
GO(k_1,V_{n_1})\hookrightarrow GO(k_2,V_{n_2})\hookrightarrow...\hookrightarrow GO(k_m,V_{n_m})\hookrightarrow
GO(k_{m+1},V_{n_{m+1}})\hookrightarrow...
\end{equation}
(respectively,
\begin{equation}\label{chain S}
GS(k_1,V_{n_1})\hookrightarrow GS(k_2,V_{n_2})\hookrightarrow...\hookrightarrow GS(k_m,V_{n_m})\hookrightarrow
GS(k_{m+1},V_{n_{m+1}})\hookrightarrow...)
\end{equation}
is an arbitrary chain of standard extensions of isotropic Grassmannians.

e) In the symplectic case, consider a sequence of integers $1\le k_1< k_2<...$ such that $k_m<\frac{n_m}{2}$,
$\underset{m\to\infty}\lim(\frac{n_m}{2}-k_m)=k\in\mathbb{N}$, and set
$\mathbf{G}\mathrm{S}(\infty,k):=\underset{\to}\lim GS(k_m,V_{n_m})$
for any chain of standard extensions (\ref{chain S}). In the orthogonal case, assume first that $\dim V_{n_m}$ are even.
Then set $\mathbf{G}\mathrm{O}^0(\infty,k):=\underset{\to}\lim GO(k_m,V_{n_m})$ for a chain (\ref{chain O}) where
$k_m<\frac{n_m}{2}$, $\underset{m\to\infty}\lim(\frac{n_m}{2}-k_m)=k\in\mathbb{N},\ k\ge2$. Finally, consider the
orthogonal case under the assumption that $\dim V_{n_m}$ are odd. Then set
$\mathbf{G}\mathrm{O}^1(\infty,k):=\underset{\to}\lim GO(k_m,V_{n_m})$ for a chain (\ref{chain O}) where
$k_m<[\frac{n_m}{2}]$, $\underset{m\to\infty}\lim([\frac{n_m}{2}]-k_m)=k\in\mathbb{N}$.
\end{definition}

The \textit{infinite projective space} $\mathbf{P}^\infty$ is defined as the ind-variety $\mathbf{G}(1)$.
Note that $\mathbf{P}^\infty\simeq\mathbf{G}\mathrm{S}(1)$. When writing $\mathbf{G}\mathrm{O}^0(\infty,k)$ below
we automatically assume $k\ne1$.

\begin{lemma}\label{G infty}
All standard ind-Grassmannians $\mathbf{G}(\infty)$, $\mathbf{G}\mathrm{O}(\infty,\infty)$,
$\mathbf{G}\mathrm{S}(\infty,\infty)$, $\mathbf{G}(k)$, $\mathbf{G}\mathrm{O}(k,\infty)$,
$\mathbf{G}\mathrm{S}(k,\infty)$, $\mathbf{G}\mathrm{O}^0(\infty,k)$, $\mathbf{G}\mathrm{O}^1(\infty,k)$,
$\mathbf{G}\mathrm{S}(\infty,k)$,
are well defined. In other words, a standard Grassmannian does not depend,
up to an isomorphism of ind-varieties, on the specific chain of standard embeddings used in its definition.
\end{lemma}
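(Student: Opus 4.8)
The plan is to fix one of the nine types and show that any two chains of standard extensions of that type produce isomorphic direct limits. As noted after \refeq{eq1}, a direct limit is unchanged if we pass to a subsequence and compose the intermediate embeddings; moreover a composition of standard extensions is again a standard extension (for ordinary Grassmannians this is the trivial check $(V_k\mapsto V_k\oplus W)\circ(\cdot\mapsto\cdot\oplus W')$, and for isotropic Grassmannians it is Lemma~\ref{compos of combin}). Hence it suffices, given two chains $\{X_m,\phi_m\}$ and $\{X'_m,\phi'_m\}$ of the same type, to build a commutative \emph{zig-zag}
$$
X_{m_1}\hookrightarrow X'_{m'_1}\hookrightarrow X_{m_2}\hookrightarrow X'_{m'_2}\hookrightarrow\cdots ,
$$
in which every arrow is a standard extension and every triangle commutes with the chain maps $\phi_m,\phi'_m$. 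The two alternating rows are then cofinal subchains of the original chains, so by the subsequence remark their limits are $\underset{\to}\lim X_m$ and $\underset{\to}\lim X'_m$, and the zig-zag supplies mutually inverse morphisms of these limits, i.e.\ an isomorphism of ind-varieties. Independence of the construction from the chosen chains then gives the lemma.

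\textbf{Cofinality (the numerical ingredient).}
A standard extension between Grassmannians (resp.\ isotropic Grassmannians) exists exactly when the inequalities \refeq{stand ineq1} (resp.\ \refeq{stand ineq2}) hold, so the first task is to check that each chain maps cofinally into the other. For the types $\mathbf{G}(\infty)$, $\mathbf{GO}(\infty,\infty)$, $\mathbf{GS}(\infty,\infty)$ both $k_m\to\infty$ and the codimension $([\tfrac{n_m}{2}]-k_m$ or $n_m-k_m)\to\infty$, so for each $X_m$ there is a term $X'_j$ with strictly larger index and codimension, hence a standard extension $X_m\hookrightarrow X'_j$. For the fixed-$k$ types $\mathbf{G}(k)$, $\mathbf{GO}(k,\infty)$, $\mathbf{GS}(k,\infty)$ the integer $k$ is literally constant, so $\dim W=0$ and the extension is induced by a linear inclusion of the ambient spaces, available once $\dim V'_j\ge\dim V_m$. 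For the types $\mathbf{GS}(\infty,k)$, $\mathbf{GO}^0(\infty,k)$, $\mathbf{GO}^1(\infty,k)$ the codimension is a non-decreasing integer sequence with finite limit $k$, hence equal to $k$ for large indices; for such indices the codimension inequality in \refeq{stand ineq2} becomes an equality and only $k'_j\ge k_m$ is needed, which again holds for large $j$ since $k'_j\to\infty$. In every case each chain is cofinally embeddable in the other by standard extensions.

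\textbf{Amalgamation (the main obstacle).}
To make the zig-zag \emph{commute} one needs the following completion property: given the chain map $\phi_m\colon X_m\hookrightarrow X_{m+1}$ (a standard extension) and an already-constructed standard extension $f\colon X_m\hookrightarrow X'_j$, produce $j'>j$ and a standard extension $g\colon X_{m+1}\hookrightarrow X'_{j'}$ with $g\circ\phi_m=\psi\circ f$, where $\psi\colon X'_j\hookrightarrow X'_{j'}$ is the chain composite. I would carry this out with the linear-algebraic data of Remarks~\ref{intrinsic def} and \ref{intrinsic def2}: a standard extension is a triple $(W,U,\underline{\phi})$ with $W\subset U$ and $\underline{\phi}\colon U\twoheadrightarrow V$ of kernel $W$ (together with the compatibility $\underline{\phi}^*\Phi=\Phi'|_U$ in the isotropic case). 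Choosing compatible splittings so that $\phi_m$ is $V_k\mapsto V_k\oplus W_{\phi}$ and $f$ is $V_k\mapsto\underline{f}^{\,-1}(V_k)$, one enlarges the target space and the surjection $\underline f$ so as to absorb $W_\phi$, taking $j'$ just large enough that the inequalities \refeq{stand ineq2} are met; commutativity is then read off the defining formulas $V_k\mapsto V_k\oplus W$. This compatibility step is the crux. The genuine difficulty is in the isotropic cases: one must extend the forms non-degenerately over each newly introduced complement while keeping $W$ isotropic, which is exactly the condition $\dim W\le\tfrac12\dim\hat W$ recorded in Remark~\ref{prop of st ext1} (and is realizable because $\mathbb{F}$ is algebraically closed of characteristic $0$); and for the fixed-codimension types $\mathbf{GS}(\infty,k)$, $\mathbf{GO}^0(\infty,k)$, $\mathbf{GO}^1(\infty,k)$ one must choose $j'$ so as not to increase the codimension beyond the stable value $k$, which is what forces the careful, rather than arbitrarily large, choice of target index.

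\textbf{Conclusion.}
Alternating the cofinality step of the second paragraph with the amalgamation step of the third, applied back-and-forth to the two chains, produces the required commutative zig-zag of standard extensions. Passing to the direct limit, the two cofinal rows have limits $\underset{\to}\lim X_m$ and $\underset{\to}\lim X'_m$, and the zig-zag induces an isomorphism between them. As this argument applies to any two chains of a given type, each of the standard ind-Grassmannians $\mathbf{G}(\infty)$, $\mathbf{GO}(\infty,\infty)$, $\mathbf{GS}(\infty,\infty)$, $\mathbf{G}(k)$, $\mathbf{GO}(k,\infty)$, $\mathbf{GS}(k,\infty)$, $\mathbf{GO}^0(\infty,k)$, $\mathbf{GO}^1(\infty,k)$, $\mathbf{GS}(\infty,k)$ is well defined.
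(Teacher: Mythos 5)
Your proposal has the right skeleton (zig-zag between the two chains, cofinality via the numerical inequalities \refeq{stand ineq1}--\refeq{stand ineq2}, and the encoding of strict standard extensions by triples $(W,U,\underline{\phi})$ from Remarks \ref{intrinsic def} and \ref{intrinsic def2}), but the crucial ``amalgamation'' step is stated with the wrong shape, and this is a genuine gap. You complete \emph{squares}: given $\phi_m\colon X_m\hookrightarrow X_{m+1}$ and $f\colon X_m\hookrightarrow X'_j$, you produce $g\colon X_{m+1}\hookrightarrow X'_{j'}$ with $g\circ\phi_m=\psi\circ f$. Both $f$ and $g$ point from the first chain to the second, so iterating this (and its mirror image) only produces two morphisms of ind-varieties $\mathbf{f}\colon\mathbf{X}\to\mathbf{X}'$ and $\mathbf{g}\colon\mathbf{X}'\to\mathbf{X}$; it never establishes $\mathbf{g}\circ\mathbf{f}=\mathrm{id}_{\mathbf{X}}$ and $\mathbf{f}\circ\mathbf{g}=\mathrm{id}_{\mathbf{X}'}$. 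Mutual embeddability of ind-varieties does not imply isomorphism: for instance the limit of standard extensions $\mathbb{P}^n\hookrightarrow\mathbb{P}^{n+1}$ given by $[x_0:\dots:x_n]\mapsto[0:x_0:\dots:x_n]$ is an injective, non-surjective endomorphism of $\mathbf{P}^\infty$, so a composite of two ``square-compatible'' systems of standard extensions can easily fail to be the identity. The zig-zag you announce in your first paragraph requires commuting \emph{triangles} --- $g_s\circ f_s$ and $f_{s+1}\circ g_s$ must equal the chain composites --- and your amalgamation step, whose two new arrows go in the same direction, cannot produce the alternating arrows these triangles involve.

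What is actually needed, and what the paper proves as its induction step, is a \emph{factorization} property: given the already-constructed strict standard extension $f\colon G(k,V)\hookrightarrow G(k',V')$ and the chain composite $\phi\colon G(k,V)\hookrightarrow G(\tilde{k},\tilde{V})$ with $\tilde{k}\ge k'$ (and the codimensions large enough, available by cofinality), construct a strict standard extension $g\colon G(k',V')\hookrightarrow G(\tilde{k},\tilde{V})$ with $g\circ f=\phi$ \emph{exactly}. The paper does this by comparing the exact triples $0\to W_f\to U_f\xrightarrow{\underline{f}}V\to0$ and $0\to W_\phi\to U_\phi\xrightarrow{\underline{\phi}}V\to0$: since $\dim U_\phi>\dim U_f$, there is an epimorphism $\varepsilon_U\colon U_\phi\twoheadrightarrow U_f$ with $\underline{f}\circ\varepsilon_U=\underline{\phi}$; one then sets $W_g:=\ker\varepsilon_U$ and extends $(W_g,\varepsilon_U)$ to a triple $(W_g,U_g,\underline{g})$ inside $\tilde{V}$, which defines $g$ with $g\circ f=\phi$. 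It is this triangle identity, fed back into the alternating construction, that makes the two limit morphisms mutually inverse. Your linear-algebraic toolkit and your numerical analysis (including the observation that for $\mathbf{G}\mathrm{S}(\infty,k)$, $\mathbf{G}\mathrm{O}^0(\infty,k)$, $\mathbf{G}\mathrm{O}^1(\infty,k)$ the codimension stabilizes at $k$) are the right ingredients; the missing idea is to factor the chain map through the transverse arrow already built, rather than to push that arrow forward along the chains.
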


\begin{proof}
We consider only $\mathbf{G}(\infty)$. All other cases are similar.
Let two chains of strict standard extensions
$$
G(k_1,V_{n_1})\overset{\phi_1}\hookrightarrow G(k_2,V_{n_2})\overset{\phi_2}\hookrightarrow...
\overset{\phi_{m-1}}\hookrightarrow G(k_m,V_{n_m})\overset{\phi_m}\hookrightarrow G(k_{m+1},V_{n_{m+1}}
)\overset{\phi_{m+1}}\hookrightarrow...,
$$
$$
G(k'_1,V_{n'_1})\overset{\phi'_1}\hookrightarrow G(k'_2,V_{n'_2})\overset{\phi'_2}\hookrightarrow...
\overset{\phi'_{m-1}}\hookrightarrow G(k'_m,V_{n'_m})\overset{\phi'_m}\hookrightarrow G(k'_{m+1},V_{n'_{m+1}}
)\overset{\phi'_{m+1}}\hookrightarrow...,
$$
such that
$$
\underset{m\to\infty}\lim k_m=\underset{m\to\infty}\lim k'_m=\underset{m\to\infty}\lim(n_m-k_m)=
\underset{m\to\infty}\lim(n'_m-k'_m)=\infty,
$$
be given. We will show that their respective direct limits $\mathbf{G}(\infty)$ and $\mathbf{G}'(\infty)$ are
isomorphic as ind-varieties.

For this, we have to construct two infinite subsequences
$\{i_s\}_{s\ge1}$ and $\{j_s\}_{s\ge1}$ of $\mathbb{Z}_+$
and two sets of morphisms
$\mathbf{f}=\{f_s:G(k_{i_s},V_{n_{i_s}})\to G(k'_{j_s},V'_{n'_{j_s}})\}_{s\ge1}$,
$\mathbf{g}=\{g_m:G(k'_{j_s},V'_{n'_{j_s}})\to G(k_{i_{s+1}},V_{n_{i_{s+1}}})\}_{m\ge1}$
such that they determine morphisms of ind-varieties $\mathbf{f}:\mathbf{G}(\infty)\to\mathbf{G}'(\infty),
\mathbf{g}:\mathbf{G}'(\infty)\to\mathbf{G}(\infty)$ with
$\mathbf{g}\circ\mathbf{f}=\mathrm{id}_{\mathbf{G}(\infty)}$ and
$\mathbf{f}\circ\mathbf{g}=\mathrm{id}_{\mathbf{G}'(\infty)}$.
Assume that the desired subsequences $\{i_s\}_{s\ge1},\{j_s\}_{s\ge1}$ and
morphisms $f_l,g_l$ are constructed for $1\le l\le s-1$,
and that these morphisms are strict standard extensions. Denote for short
$k:=k_{i_s},\ n:=n_{i_s},\ V:=V_n,\ k':=k'_{j_s},\ n':=n'_{j_s},\ V':=V'_{n'},\
G:=G(k,V),\ G':=G(k',V'),\ f:=f_s:G\hookrightarrow G',\ \tilde{k}:=k_{i_{s+1}},
\tilde{n}:=n_{i_{s+1}},\ \tilde{V}:=V_{\tilde{n}},\ \tilde{G}:=G(\tilde{k},\tilde{V}),\
\phi:=\phi_{i_s}:G\hookrightarrow\tilde{G}$. Without loss of generality that we assume that $\tilde{k}>k'$.
By Remark \ref{intrinsic def}, $f$ is given by a triple $(W_f,U_f,\underline{f})$, where $W_f\subset U_f$
is a flag in $V'$. Respectively, $\phi$ is given by a triple $(W_\phi,U_\phi,\underline{\phi})$,
where $W_\phi\subset U_\phi$ is a flag in $\tilde{V}$.

For the induction step we will now find a strict standard extension
$g:=g_s:G'\hookrightarrow\tilde{G}$ such that $g\circ f=\phi$. Indeed,
consider the exact triples
$0\to W_f\to U_f\xrightarrow{\underline{f}}V\to0,\ \ \
0\to W_{\phi}\to U_{\phi}\xrightarrow{\underline{\phi}}V\to0$.
Since both $\underline{f}$ and $\underline{\phi}$ are epimorphisms, and $\dim U_{\phi}>\dim U_f$ as $\tilde{k}>k'$,
it follows that there exists a (non-unique) epimorphism $\varepsilon_U:U_{\phi}\twoheadrightarrow U_f$
such that $\underline{\phi}=\underline{f}\circ\varepsilon_U$.
Then $\varepsilon_U|_W$ is a well-defined epimorphism $W_{\phi}\twoheadrightarrow W_f$.
Putting $W_g:=\ker{\varepsilon_U}$, we have the exact triple
$0\to W_g\to U_{\phi}\xrightarrow{\varepsilon_U}U_f\to0$.
Next, set
$U'_g:=W_g\oplus V'$ and fix an embedding $i:U'_g\hookrightarrow\tilde{V}$ such that $i|_{U_f}=\textrm{id}$.
Then $W_g\subset U_g:=i(U'_g)$ is a flag in $\tilde{V}$ equipped with an isomorphism $\underline{g}:U_g/W_g\simeq V'$.
The corresponding strict standard extension $g:G'\hookrightarrow\tilde{G}$ satisfies the property $g\circ f=\phi$,
as claimed.
\end{proof}

Note furthermore that the standard ind-Grassmannians introduced above are isomorphic to certain ind-varieties
introduced in \cite{DiP}. More precisely, let $\widetilde{V}$ be a countable-dimensional vector space with basis
$\{v_1,...,v_n,...\}$ and let
$\widetilde{W}\subset\widetilde{V}$ be a subspace generated by a subset of $\{v_1,...,v_n,...\}$. Then
$G(\widetilde{W},\widetilde{V})$ is by definition the set of subspaces $\widetilde{E}\subset\widetilde{V}$
satisfying the following two conditions:

(i) $\Span(\{v_1,...,v_n,...\}\cap\widetilde{E})$ is of finite codimension in $\widetilde{E}$;

(ii) there exists a finite-dimensional subspace $\widetilde{U}\subset\widetilde{V}$ such that
$\widetilde{W}\subset\widetilde{E}+\widetilde{U}$, $\widetilde{E}\subset\widetilde{W}+\widetilde{U}$,
$\dim(\widetilde{E}\cap\widetilde{U})=\dim(\widetilde{W}\cap\widetilde{U})$.\\
Then it is easy to see (a much stronger result is proved in \cite{DiP}) that $G(\widetilde{W},\widetilde{V})$
has a natural structure of an ind-variety such that $G(\widetilde{W},\widetilde{V})$ is the direct limit of a chain of
standard extensions of Grassmannians. Moreover,
$$
G(\widetilde{W},\widetilde{V})\cong\mathbf{G}(\min\{\dim\widetilde{W},\codim_{\widetilde{V}}\widetilde{W}\}).
$$
Similarly, in the isotropic case (i.e. in the case when $\widetilde{W}$ is equipped with an appropriate
non-degenerate quadratic form)
the standard isotropic ind-Grassmannians introduced in this paper represent all isomorphism
classes of ind-varieties $G(\widetilde{W},\widetilde{V})$  introduced in \cite{DiP}
(in this case $\widetilde{W}$ is an isotropic subspace of $\widetilde{V}$) and satisfying
$\Pic G(\widetilde{W},\widetilde{V})\simeq\mathbb{Z}$.

\vspace{1cm}

\section{Classification of linear ind-Grassmannians}\label{sec5}

\vspace{0.5cm}

In this section we prove the following main result of the note.

\textbf{Theorem 2}. \textit{Every linear ind-Grassmannian is isomorphic as an ind-variety to one of the standard
ind-Grassmannians $\mathbf{G}(k)$ for $k\ge1$, $\mathbf{G}(\infty)$, $\mathbf{G}\mathrm{O}(k,\infty)$ for $k\ge1$,
$\mathbf{G}\mathrm{O}^0(\infty,k)$ for $k\ge2$, $\mathbf{G}\mathrm{O}^1(\infty,k)$ for $k\ge0$,
$\mathbf{G}\mathrm{O}(\infty,\infty)$, $\mathbf{G}\mathrm{S}(k,\infty)$ for $k\ge2$, $\mathbf{G}\mathrm{S}(\infty,k)$
for $k\ge0$, $\mathbf{G}\mathrm{S}(\infty,\infty)$, and the latter are pairwise non-isomorphic}.

\begin{proof}
Let a linear ind-Grassmannian $\mathbf{X}$ be given as the direct limit of a chain of embeddings
$$
X_1\stackrel{\phi_1}{\hookrightarrow}X_2\stackrel{\phi_2}{\hookrightarrow}
\dots\stackrel{\phi_{m-1}}{\hookrightarrow}X_m\stackrel{\phi_m}{\hookrightarrow}
X_{m+1}\stackrel{\phi_{m+1}}{\hookrightarrow}\dots,
$$
where $X_m$ are Grassmannians, possibly orthogonal or symplectic, such that
$\underset{m\to\infty}\lim(\dim X_m)=\infty.$
Then, for infinitely many $m$, $X_m$ will be a Grassmannian, or an orthogonal Grassmannian, or a symplectic
Grassmannian. Therefore, without loss of generality, we can assume that all $X_m$ are of one of the above three types.

Suppose first that all $X_m$ are Grassmannians. Then we have the following two options: for infinitely many $m$,
the embedding $\phi_m:X_m\to X_{m+1}$ factors through an embedding of a projective space into $X_{m+1}$, i.e.
there exists a commutative diagram of embeddings
$$
\xymatrix{
X_m \ar[rr]^-{\phi_m}\ar[dr]& & X_{m+1}\\
& \mathbb{P}^{j_m},\ar[ur] & }
$$
or this is not the case. In the first case $\mathbf{X}\simeq\underset{\to}\lim~\mathbb{P}^{j_m}$, hence
$\mathbf{X}\simeq\mathbf{P}^{\infty}$.
In the second case, by deleting some first embeddings we can assume that none of the embeddings $\phi_m:X_m\to X_{m+1}$
factors through an embedding of a projective space into $X_{m+1}$. Then, Corollary \ref{3.8} implies that all embeddings
$\phi_m$ are standard extensions, hence $\mathbf{X}$ is isomorphic to $\mathbf{G}(k)$ or $\mathbf{G}(\infty)$.

In the symplectic case, the reader will argue in a similar way that Corollary \ref{3.8} implies that $\mathbf{X}$ is
either isomorphic to $\mathbf{G}(k)$ or $\mathbf{G}(\infty)$ (this happens when all $\phi_m$ are combinations of
isotropic and standard extensions or factor through projective spaces), or to one of the standard symplectic
ind-Grassmannians.

The orthogonal case is similar but has some special features. First, if all morphisms $\phi_m$ factor through
respective quadrics $Q_{\beta}^{\dim V_{m+1}-2k_{m+1}}$, one needs to prove that the direct limit of any chain
of linear embeddings
$$
Q_1{\hookrightarrow}Q_2{\hookrightarrow}\dots{\hookrightarrow}Q_m{\hookrightarrow}Q_{m+1}{\hookrightarrow}\dots\ ,
$$
where $\underset{m\to\infty}\lim\dim Q_m=\infty$, is isomorphic either to $\mathbf{P}^{\infty}$ or to
$\mathbf{G}\mathrm{O}(1,\infty)$. This is an exercise which we leave to the reader. Second, in the orthogonal case one
applies Corollary \ref{3.8} when $[\frac{\dim V_m}{2}]-k_m\ge3$ for infinitely many $m$ (in this case one can assume
without loss of generality that $[\frac{\dim V_m}{2}]-k_m\ge3$ for all $m$). The case when
$[\frac{\dim V_m}{2}]-k_m\le2$ for infinitely many $m$ needs special attention. In the latter case one assumes without
loss of generality that $[\frac{\dim V_m}{2}]-k_m$ is constant and then applies Theorem 1 when $\dim V_m$ is odd for
all $m$, and Proposition \ref{case n-2 etc} when $\dim V_m$ is even for all $m$ (in the latter case
$\frac{\dim V_m}{2}-k_m=2$ for all $m$).

The first claim of Theorem 2 is now proved.

The claim that the standard ind-Grassmannians are pairwise non-isomorphic follows from Lemmas \ref{non-isom1},
\ref{non-isom2} and \ref{non-isom3} below.
\end{proof}

In what follows we will sometimes write $\mathbf{G}\mathrm{O}(\infty,k)$ meaning $\mathbf{G}\mathrm{O}^0(\infty,k)$
or $\mathbf{G}\mathrm{O}^1(\infty,k)$. This allows the simultaneous consideration of $\mathbf{G}\mathrm{O}^0(\infty,k)$
and $\mathbf{G}\mathrm{O}^1(\infty,k)$.

\begin{lemma}\label{non-isom1}
(i) Let $k,k'\in\mathbb{Z}_{+}\cup\{\infty\},\ k\ne k'$. Then
$\mathbf{G}(k)\not\simeq\mathbf{G}(k')$, $\mathbf{G}\mathrm{O}(k,\infty)\not\simeq\mathbf{G}\mathrm{O}(k',\infty)$,
$\mathbf{G}\mathrm{O}(k,\infty)\not\simeq\mathbf{G}\mathrm{O}(k',\infty)$.

(ii) Let $k\ge2$. Then $\mathbf{G}\mathrm{O}^0(\infty,k)\not\simeq\mathbf{G}\mathrm{O}^1(\infty,k)$.

(iii) Let $k,k'\in\mathbb{N}\cup\{\infty\},\ k\ne k'$. Then $\mathbf{G}\mathrm{O}(\infty,k)\not\simeq
\mathbf{G}\mathrm{O}(\infty,k')$, $\mathbf{G}\mathrm{S}(\infty,k)\not\simeq\mathbf{G}\mathrm{S}(\infty,k')$.

(iv) Let $k\in\mathbb{N}\cup\{\infty\},\ k'\in\mathbb{Z}_{+}\cup\{\infty\},\ k\ne k'$.
Then $\mathbf{G}\mathrm{O}(\infty,k)\not\simeq\mathbf{G}\mathrm{O}(k',\infty)$, $\mathbf{G}\mathrm{S}(\infty,k)
\not\simeq\mathbf{G}\mathrm{S}(k',\infty)$.

\end{lemma}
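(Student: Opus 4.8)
The plan is to attach to each standard ind-Grassmannian a few intrinsic numerical invariants that are visibly preserved by any isomorphism of ind-varieties, and to check that these already separate the varieties occurring in (i)--(iv). For an ind-variety $\mathbf{X}=\underset{\to}\lim X_m$ of the types considered, the basic invariants are the set of dimensions of its \emph{maximal finite-dimensional projective spaces} and, in the orthogonal case, the set of dimensions of its \emph{maximal finite-dimensional quadrics}. An isomorphism $\mathbf{f}\colon\mathbf{X}\to\mathbf{X}'$ induces an isomorphism on $\Pic\simeq\mathbb{Z}$, hence is linear and is realized at finite level by linear embeddings $f_m\colon X_m\to X'_{n_m}$. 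Each $f_m$ carries a linearly embedded $\mathbb{P}^d$ (respectively, a quadric) isomorphically onto one of the same dimension, and, $\mathbf{f}$ being invertible, carries maximal such subvarieties to maximal ones. Thus these sets of finite dimensions are isomorphism invariants of $\mathbf{X}$.

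First I would read off the invariants from Lemmas~\ref{Bkn} and~\ref{Bkn sympl}. On $GS(k,V)$ with $\dim V=2n$ the two families of maximal projective spaces have dimensions $k$ and $2n-2k+1$; on $GO(k,V)$ with $[\frac{\dim V}{2}]=\nu$ they have dimensions $k$ and $\nu-k$, while the maximal quadrics not contained in a projective space have dimensions $\dim V-2k$ (type $\beta$) and $4$ (type $\gamma$). Passing to the limit, exactly one projective dimension stays bounded and the other tends to $\infty$ (both tend to $\infty$ for the $(\infty,\infty)$-types). Hence the bounded projective dimension equals $k$ for $\mathbf{G}(k)$, $\mathbf{G}\mathrm{O}(k,\infty)$, $\mathbf{G}\mathrm{O}^0(\infty,k)$, $\mathbf{G}\mathrm{O}^1(\infty,k)$ and $\mathbf{G}\mathrm{S}(k,\infty)$, equals $2k+1$ for $\mathbf{G}\mathrm{S}(\infty,k)$, and fails to exist for the three $(\infty,\infty)$-varieties; moreover $\mathbf{G}\mathrm{O}^0(\infty,k)$ has a bounded quadric family of \emph{even} dimension $2k$, whereas $\mathbf{G}\mathrm{O}^1(\infty,k)$ has one of \emph{odd} dimension $2k+1$.

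Most of the statement now follows at once. In (i) and (iii) the bounded projective dimension is $k$ (respectively $2k+1$ for $\mathbf{G}\mathrm{S}(\infty,k)$), so $k\ne k'$ forces non-isomorphism, and the $(\infty,\infty)$-varieties are separated from the rest by the sheer absence of a bounded family. In (ii), every $\gamma$-quadric has the even dimension $4$, so the existence of a bounded quadric family of \emph{odd} dimension $2k+1$ distinguishes $\mathbf{G}\mathrm{O}^1(\infty,k)$ from $\mathbf{G}\mathrm{O}^0(\infty,k)$; here $k\ge2$ guarantees $2k+1\ge5$, so this odd family is genuinely of type $\beta$ and not of type $\gamma$. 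The orthogonal half of (iv) is equally direct: the bounded projective dimension is $k$ for $\mathbf{G}\mathrm{O}(\infty,k)$ and $k'$ for $\mathbf{G}\mathrm{O}(k',\infty)$, and the $\infty$-cases reduce to presence versus absence of a bounded family.

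The only genuine coincidence, which I expect to be the main obstacle, is the symplectic half of (iv): $\mathbf{G}\mathrm{S}(\infty,k)$ has bounded projective dimension $2k+1$ while $\mathbf{G}\mathrm{S}(k',\infty)$ has bounded projective dimension $k'$, and these agree exactly when $k'=2k+1$, even though $k\ne k'$. To break this tie I would use that the operation ``pass to the base of the bounded family of maximal projective spaces'' is functorial under ind-isomorphisms: in each of these two varieties that family is the \emph{unique} family of maximal finite-dimensional projective spaces (a bounded- and an unbounded-family plane meet in at most a line by Lemma~\ref{intersect dim=1}, so the bounded planes are not contained in the unbounded ones and are indeed maximal), whence $\mathbf{f}$ carries it bijectively onto the corresponding family of the target and, realized at finite level through the $f_m$ and $f_m^{-1}$, induces an isomorphism of the two bases. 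By Lemma~\ref{Bkn sympl} the base for $\mathbf{G}\mathrm{S}(k',\infty)$ is $\mathbf{G}\mathrm{S}(k'+1,\infty)$, with bounded projective dimension $k'+1$, while the base for $\mathbf{G}\mathrm{S}(\infty,k)$ is $\mathbf{G}\mathrm{S}(\infty,k+1)$, with bounded projective dimension $2k+3$. Iterating, the bounded projective dimensions run through $k',k'+1,k'+2,\dots$ in the first case and $2k+1,2k+3,2k+5,\dots$ in the second; these sequences have different increments and cannot coincide beyond their first term. Already at the first step $k'+1=2k+2$ is even while $2k+3$ is odd, contradicting the isomorphism of bases and ruling out $\mathbf{G}\mathrm{S}(\infty,k)\simeq\mathbf{G}\mathrm{S}(k',\infty)$. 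The point requiring the most care is precisely this functoriality of the base construction, i.e.\ verifying that the induced bijection of bases is a morphism of ind-varieties in both directions.
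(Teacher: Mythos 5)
Your strategy---extracting the set of dimensions of maximal finite-dimensional projective spaces (and quadrics) as an isomorphism invariant---is genuinely different from the paper's method for (i), (iii), (iv) (the paper realizes a hypothetical isomorphism as an alternating chain of linear embeddings, classifies each arrow by Corollary \ref{3.8}, and contradicts the inequalities of Remark \ref{prop of st ext1}; only your part (ii) coincides with the paper's own argument). However, your invariant collapses exactly on the degenerate members of the families, which the statement does include since $1\in\mathbb{Z}_+$. On $\mathbf{G}(1)=\mathbf{P}^\infty$ there are \emph{no} maximal finite-dimensional projective spaces at all (every $\mathbb{P}^d$ lies in a $\mathbb{P}^{d+1}$), so its invariant is empty---exactly as for $\mathbf{G}(\infty)$; your table entry ``bounded projective dimension $=k$'' is false at $k=1$. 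Consequently your argument cannot prove $\mathbf{G}(1)\not\simeq\mathbf{G}(\infty)$, which is part of (i), nor $\mathbf{G}\mathrm{S}(1,\infty)=\mathbf{P}^\infty\not\simeq\mathbf{G}\mathrm{S}(\infty,\infty)$, which occurs in (i) and (iv). The orthogonal bookkeeping is also off: on $\mathbf{G}\mathrm{O}(1,\infty)$ (an infinite quadric) lines are not maximal, so its invariant is empty too, while $\mathbf{G}\mathrm{O}(\infty,\infty)$---contrary to your ``sheer absence of a bounded family''---\emph{does} carry a bounded family of maximal quadrics, namely the $4$-dimensional $\gamma$-quadrics of Lemma \ref{Bkn}(v), which survive standard extensions unchanged. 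Note also that $\mathbf{G}\mathrm{O}^0(\infty,2)$ has $\{4\}$ as its set of bounded maximal quadric dimensions, the same as $\mathbf{G}\mathrm{O}(\infty,\infty)$, and that in the even case Lemma \ref{Bkn} lists no $\beta$-family of projective spaces, so your claim that $\mathbf{G}\mathrm{O}^0(\infty,k)$ has bounded projective dimension $k$ requires an argument (maximality of the rulings of the $\beta$-quadrics) that you neither give nor can simply ``read off'' from the paper's lemmas. These $\mathbf{P}^\infty$-type and quadric coincidences are precisely what the paper's chain argument, together with Remark \ref{prop of st ext}, is designed to dispose of.

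The second gap is the one you flagged yourself, and it is fatal for the symplectic half of (iv) in the tie case $k'=2k+1$: your entire resolution rests on the claim that an isomorphism of ind-varieties induces an isomorphism between the \emph{bases} of the bounded families of maximal projective spaces, iterated indefinitely. You do not prove that the induced bijection of bases is a morphism of ind-varieties in either direction, and this is not a routine verification---it is a reconstruction statement of the same nature as the $\Sigma_\alpha$, $\Sigma_\beta$ constructions and diagrams (\ref{phi Y^2}), (\ref{phi Y^1}) that carry the proof of Theorem 1, so it is a substantial missing step rather than a deferred detail. The paper never needs it: in the alternating chain produced by a hypothetical isomorphism, once $k_m>k'$ a linear embedding $GS(k_m,V_{n_m})\to GS(k',V'_{n'})$ cannot be a standard extension by (\ref{stand ineq2}), and the remaining alternatives of Corollary \ref{3.8} (combinations of isotropic and standard extensions, or factoring through a projective space) are excluded as in case (i). So, as written, your proposal does prove (ii) and the non-degenerate cases of (i), (iii), (iv), but it leaves genuine holes at the pairs involving $\mathbf{P}^\infty$-like varieties and at the (iv) tie-break.
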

\begin{proof}
In (i), (iii) and (iv) we only consider the symplectic case and leave the other cases to the reader.

(i) Let $k>k'$. Assume that $k\in\mathbb{Z}_{+}$ and that $\mathbf{X}:=\mathbf{G}\mathrm{S}(k,\infty)$ and
$\mathbf{X}':=\mathbf{G}\mathrm{S}(k',\infty)$ are isomorphic. This implies that there exist
subsequences $\{i_s\}_{s\ge1}$ and $\{j_s\}_{s\ge1}$ of $\mathbb{Z}_{+}$ and
a chain of linear embeddings
\begin{equation}\label{chain1}
...\hookrightarrow GS(k,V_{n_{i_s}})\overset{f_s}\hookrightarrow GS(k',V'_{n'_{j_s}})\overset{g_s}\hookrightarrow
GS(k,V_{n_{i_{s+1}}})\overset{f_{s+1}}\hookrightarrow GS(k',V'_{n'_{j_{s+1}}})\overset{g_{s+1}}\hookrightarrow ...\ ,
\end{equation}
such that the compositions $g_s\circ f_s$ and $f_{s+1}\circ g_s$ are standard extensions and
the direct limit of the chain (\ref{chain1}) is isomorphic to both $\mathbf{X}$ and $\mathbf{X}'$.
According to Corollary \ref{3.8}, we can assume without loss of generality that all embeddings $f_s$ and $g_s$ are
standard extensions, or factor through isotropic extensions, or factor through embeddings to projective spaces.

In the first case, since
$GS(k,V_{n_{i_s}})\overset{f_s}\hookrightarrow GS(k',V'_{n'_{j_s}})$
is a standard extension, it follows from (\ref{stand ineq1}) that $k'\ge k$, contrary to the assumption.

In the third case both $\mathbf{X}$ and $\mathbf{X}'$ are isomorphic to $\mathbf{P}^\infty$.
On the other hand, Remark \ref{prop of st ext} implies that $\mathbf{X}$ is not isomorphic to
$\mathbf{P}^\infty$ as $k>1$.

Consider now the second case. Here $f_s$ factorizes as
$f_s: GS(k,V_{n_{i_s}})\overset{t}\hookrightarrow G(k,V_{n_{i_s}})\overset{\tilde{f}_s}\hookrightarrow
GS(k',V'_{n'_{j_s}})$,
where $t$ is the tautological embedding and $\tilde{f}_s$ is an isotropic extension followed by a standard extension.
The composition
\begin{equation}\label{GkV}
G(k,V_{n_{i_s}})\overset{\tilde{f}_s}\hookrightarrow GS(k',V'_{n'_{j_s}})\overset{\tilde{t}}\hookrightarrow
G(k',V'_{n'_{j_s}}),
\end{equation}
$\tilde{t}$ being the tautological embedding, is a standard extension or factors through a projective space.
The latter assumption leads to the same contradiction as in the above considered third case, so we must assume that
(\ref{GkV}) is a standard extension. The existence of a standard extension
$G(k,V_{n_{i_s}})\hookrightarrow G(k',V'_{n'_{j_s}})$ implies $k'\ge k,\ k'-k\le n'_{j_s}-n_{i_s},$ or
$k\le n'_{j_s}-k'$, $k+k'\ge n_{i_s}$ (see Remark \ref{prop of st ext1}). Since for $n_{i_s}$ large enough, both pairs
of inequalities contradict our assumption that $k>k'$, we conclude that the second case is also impossible.

We have now shown that all three cases lead to contradictions, hence (i) follows for $k\in\mathbb{Z}_{+}$.
The argument for $k=\infty$ is very similar.

(ii) The maximal quadrics on $\mathbf{G}\mathrm{O}^0(\infty,k)$ not lying in projective spaces have dimension $2k$,
while $\mathbf{G}\mathrm{O}^1(\infty,k)$ not lying in projective spaces have dimension $2k+1$, see Lemma \ref{Bkn}.
This imlies that $\mathbf{G}\mathrm{O}^0(\infty,k)\not\simeq\mathbf{G}\mathrm{O}^1(\infty,k)$.

(iii) Let $\mathbb{Z}_{+}\ni k>k'$. Assume that $\mathbf{X}:=\mathbf{G}\mathrm{S}(k,\infty)$ and
$\mathbf{X}':=\mathbf{G}\mathrm{S}(k',\infty)$ are
isomorphic. As above, this implies that there exists a chain of linear embeddings (\ref{chain1})
such that the compositions $g_s\circ f_s$ and $f_{s+1}\circ g_s$ are standard extensions and
the direct limit of the chain (\ref{chain1}) is isomorphic to both $\mathbf{X}$ and $\mathbf{X}'$.
Without loss of generality we can assume that all embeddings
$f_s$ and $g_s$ are standard extensions, or factor through isotropic extensions, or factor through embeddings
to projective spaces.

In the first case we have a standard extension
$$
GS(\frac{1}{2}\dim V_{n_{i_s}}-k,V_{n_{i_s}})\overset{f_s}\hookrightarrow GS(\frac{1}{2}\dim V'_{n'_{j_s}}-
k',V'_{n'_{j_s}}),
$$
and (\ref{stand ineq2}) gives $k\le k'$, contrary to the assumption.

The arguments in the second and third case are similar to the respective arguments in (i).

The proof is finished for $k<\infty$. The case $k=\infty$ is similar.

(iv) The argument is practically the same as in (i).

\end{proof}

\begin{lemma}\label{non-isom2}
For any $k,k'\in\mathbb{Z}_{+}\cup\{\infty\},k''\in\mathbb{N}\cup\{\infty\}$ the following assertions hold.

(i) $\mathbf{G}(k)\not\simeq\mathbf{G}\mathrm{S}(k',\infty)$, unless $k=k'=1$,\
$\mathbf{G}(k)\not\simeq\mathbf{G}\mathrm{O}(k',\infty)$,

(ii) $\mathbf{G}(k)\not\simeq\mathbf{G}\mathrm{O}(\infty,k'')$,\
$\mathbf{G}(k)\not\simeq\mathbf{G}\mathrm{S}(\infty,k'')$.

\end{lemma}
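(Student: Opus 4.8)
The plan is to argue by contradiction, following the back-and-forth technique of Lemma \ref{non-isom1}: an isomorphism between an ordinary standard ind-Grassmannian and an isotropic one would produce a chain in which ordinary and isotropic Grassmannians alternate, and I would reduce each mixed embedding to an embedding of \emph{ordinary} Grassmannians by composing with a tautological embedding, then exploit the rigidity of standard extensions of isotropic Grassmannians recorded in Remark \ref{prop of st ext}. Both parts (i) and (ii), and the symplectic and orthogonal cases, fit the same scheme, the only genuine difference being the projective-space branch discussed at the end.

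Concretely, suppose $\mathbf{G}(k)\simeq\mathbf{G}\mathrm{S}(k',\infty)$. As in Lemma \ref{non-isom1}, realize the isomorphism by a chain
$$
\ldots\hookrightarrow G(k_{i_s},V_{i_s})\overset{f_s}\hookrightarrow GS(k'_{j_s},V'_{j_s})\overset{g_s}\hookrightarrow G(k_{i_{s+1}},V_{i_{s+1}})\overset{f_{s+1}}\hookrightarrow\ldots
$$
in which every $g_s\circ f_s$ is a standard extension of ordinary Grassmannians and every $f_{s+1}\circ g_s$ is a standard extension of symplectic Grassmannians. Composing $f_s$ with the tautological embedding $\tilde t\colon GS(k'_{j_s},V'_{j_s})\hookrightarrow G(k'_{j_s},V'_{j_s})$ produces a linear embedding of ordinary Grassmannians, so Remark \ref{X,X'} applies: either $\tilde t\circ f_s$ is a standard extension, or it factors through a projective space on $G(k'_{j_s},V'_{j_s})$.

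The heart of the argument is the standard-extension alternative. Here $\tilde t\circ f_s$ is given by $V_k\mapsto\sigma_s(V_k)\oplus W_s$; since all these subspaces are $\Phi'$-isotropic and $k\ge2$ (eventually, also in the cases $k=\infty$), a pairwise-isotropy argument shows that the whole span $V''_s:=\Span(\sigma_s(V_{i_s})\oplus W_s)$ is isotropic, so $f_s$ factors through the isotropic extension $G(k'_{j_s},V''_s)\hookrightarrow GS(k'_{j_s},V'_{j_s})$. Consequently $f_{s+1}\circ g_s$, a standard extension of symplectic Grassmannians, has its image inside $G(k'_{j_{s+1}},V''_{s+1})$ with $V''_{s+1}$ isotropic; but for a symplectic standard extension $\psi(V)=V\oplus W_\psi$ the span $\Span(\cup\psi(V))=V'_{j_s}\oplus W_\psi$ contains the non-degenerate space $V'_{j_s}$ and cannot be isotropic, which is precisely Remark \ref{prop of st ext}. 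This contradiction settles the standard-extension branch. The projective-space branch is clean in the symplectic case: since $GS(k',V')$ is the linear section $G(k',V')\cap\mathbb{P}(\Ker c_\Phi)$ cut out by contraction with $\Phi\in\wedge^2V'^*$, the span of $f_s\bigl(G(k_{i_s},V_{i_s})\bigr)$ inside the projective space already lies in $GS(k'_{j_s},V'_{j_s})$; hence $f_s$, and therefore $g_s\circ f_s$, factors through a projective space, contradicting Remark \ref{prop of st ext} for $k\ge2$. The borderline $k=1$ forces $\mathbf{G}(1)=\mathbf{P}^\infty$, whence $k'=1$ as well (a $\mathbf{G}\mathrm{S}(k',\infty)$ with $k'\ge2$ is not $\simeq\mathbf{P}^\infty$), the single permitted exception.

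The main obstacle is the orthogonal case $\mathbf{G}(k)\not\simeq\mathbf{G}\mathrm{O}(k',\infty)$ (and its $k''$-analogue in (ii)). The standard-extension step carries over verbatim, because the pairwise-isotropy argument and the span-is-not-isotropic argument work equally for symmetric forms. What fails is the projective-space step: contraction with a symmetric $\Phi\in S^2V'^*$ vanishes on $\wedge^{k'}V'$, so $GO(k',V')$ is \emph{not} a linear section of $G(k',V')$, and the dichotomy of Remark \ref{X,X'} must be replaced by Corollary \ref{3.8}, which additionally permits factoring through a maximal quadric $Q_\beta^{\dim V'-2k'}$. I would dispose of this degenerate branch as in the proof of Theorem 2: a chain of embeddings all factoring through projective spaces or quadrics has direct limit $\mathbf{P}^\infty$ or $\mathbf{G}\mathrm{O}(1,\infty)$, and for $k\ge2$ neither is isomorphic to $\mathbf{G}(k)$ — the first by Remark \ref{prop of st ext}, the second because the maximal quadrics not lying in projective spaces described in Lemma \ref{Bkn} give an invariant absent on $\mathbf{G}(k)$.
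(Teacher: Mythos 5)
Your symplectic argument coincides in substance with the paper's: the same back-and-forth chain, the same composition of the mixed embedding with the tautological embedding into $G(k',\tilde{V}')$, the same dichotomy from Remark \ref{X,X'}, and, in the standard-extension branch, the same contradiction --- your observation that the span $\Span\bigl(\cup(\sigma_s(V_k)\oplus W_s)\bigr)$ is totally isotropic while the span of the image of a symplectic standard extension contains a copy of the non-degenerate space $V'_{n'_{j_s}}$ is exactly the paper's comparison of the zero form on $U_F/W_F$ with the non-degenerate form on $U_\phi/W_\phi$ via the inclusions $W_F\subset W_\phi\subset U_\phi\subset U_F$. In the projective-space branch you genuinely deviate, and to your advantage: where the paper classifies the maximal projective space of $G(k',\tilde{V}')$ containing the image, intersects it with $\widetilde{GS}$, and contradicts Remark \ref{prop of st ext} for the symplectic extension $f_{s+1}\circ g_s$, you use that $GS(k',V')$ is a linear section of $G(k',V')$ in the Pl\"ucker embedding to conclude that the projective span of $f_s(G)$ already lies in $GS(k',V')$, and then contradict Remark \ref{prop of st ext} for the ordinary extension $g_s\circ f_s$. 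Both are valid; yours avoids the case analysis.

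The gap is in the orthogonal half, which the paper leaves to the reader but which your proposal undertakes to settle. First, a technical misstep: Corollary \ref{3.8} cannot be invoked for $f_s\colon G(k,V_{n_{i_s}})\to GO(k',V'_{n'_{j_s}})$, since Theorem 1 and its corollaries require source and target to be Grassmannians of the same type. The quadric branch in fact arises because Remark \ref{X,X'} still applies to $\tilde t\circ f_s$, and a maximal projective space $\{V_{k'}\supset V_{k'-1}\}$ of $G(k',V')$ meets $GO(k',V')$ in the quadric $GO(1,V_{k'-1}^\bot/V_{k'-1})$; this is repairable (though note that for $\mathbf{G}\mathrm{O}^1(\infty,0)$ in part (ii) even this fails, because there the tautological embedding pulls $\mathcal{O}(1)$ back to $\mathcal{O}(2)$, so $\tilde t\circ f_s$ is not linear and the dichotomy is unavailable). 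Second, and more seriously: your exclusion of $\mathbf{G}(k)\simeq\mathbf{G}\mathrm{O}(1,\infty)$ for $k\ge2$ rests on ``the maximal quadrics not lying in projective spaces described in Lemma \ref{Bkn}'', but Lemma \ref{Bkn} describes quadrics on \emph{orthogonal} Grassmannians and says nothing about $\mathbf{G}(k)$. What your invariant actually requires is that on an ordinary Grassmannian every quadric of dimension $\ge5$ lies in a projective space (equivalently, that a standard extension $G(k,V)\to G(k,V')$ with $\dim G(k,V)\ge5$ cannot factor through a quadric on the target) --- the ordinary-Grassmannian analogue of Lemma \ref{Bkn}(vi). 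That fact is true and provable by the conic technique of that lemma, but it appears nowhere in the paper and is not supplied by your citation; without it, the projective-space/quadric branch of the orthogonal case remains open.
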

\begin{proof} Again we consider only the symplectic case and leave the orthogonal case to the reader.

(i) We have to prove that $\mathbf{G}(k)\not\simeq\mathbf{G}\mathrm{S}(k',\infty)$, unless $k=k'=1$.
The case $k'=1,\ k>k'$, is already considered in Lemma \ref{non-isom1},(i), so we can assume $k\ne1,\ k'\ne1,\ k\ne k'$.

Let $\mathbf{G}(k)$ (respectively, $\mathbf{G}\mathrm{S}(k',\infty)$) be given as the direct limit of a chain of strict
standard extensions
$$
G(k,V_{n_1})\hookrightarrow G(k,V_{n_2})\hookrightarrow...\hookrightarrow G(k,V_{n_m})\hookrightarrow
G(k,V_{n_{m+1}})\hookrightarrow...
$$
(respectively,
$$
GS(k',V'_{n'_1})\overset{\phi_1}\hookrightarrow GS(k',V'_{n'_2})\overset{\phi_2}\hookrightarrow...
\overset{\phi_{m-1}}\hookrightarrow GS(k',V'_{n'_m})\overset{\phi_m}\hookrightarrow
GS(k',V'_{n'_{m+1}})\overset{\phi_{m+1}}\hookrightarrow...).
$$
Suppose that $\mathbf{G}(k)\simeq\mathbf{G}\mathrm{S}(k',\infty)$. This means
that there exist two infinite subsequences
$\{i_s\}_{s\ge1}$ and $\{j_s\}_{s\ge1}$ of $\mathbb{Z}_+$
and two sets of morphisms
$\mathbf{f}=\{f_s:G(k,V_{n_{i_s}})\to GS(k',V'_{n'_{j_s}})\}_{s\ge1}$,
$\mathbf{g}=\{g_s:GS(k',V'_{n'_{j_s}})\to G(k,V_{n_{i_{s+1}}})\}_{m\ge1}$
which determine morphisms of ind-varieties $\mathbf{f}:\mathbf{G}(k)\to\mathbf{G}\mathrm{S}(k',\infty),
\mathbf{g}:\mathbf{G}\mathrm{S}(k',\infty)\to\mathbf{G}(k)$ with
$\mathbf{g}\circ\mathbf{f}=\mathrm{id}_{\mathbf{G}(k)}$ and
$\mathbf{f}\circ\mathbf{g}=\mathrm{id}_{\mathbf{G}\mathrm{S}(k',\infty)}$.

Set $\tilde{V}:=V_{n_{i_{s+1}}},\ \tilde{G}:=G(k,\tilde{V}),\ V':=V'_{n'_{j_s}},
\ GS:=GS(k',V'),\ \tilde{V}':=V'_{n'_{j_{s+1}}},\ \widetilde{GS}:=GS(k',\tilde{V}'),\ g:=g_s:GS\hookrightarrow\tilde{G},
\ f:=f_{s+1}:\tilde{G}\hookrightarrow\widetilde{GS},
\ \phi:=\phi_{i_s}:GS\hookrightarrow\widetilde{GS}$. Note that $\phi$ is a standard extension and $\phi=f\circ g$ by
construction.

Consider the composition
$F:\tilde{G}\overset{f}\hookrightarrow \widetilde{GS}\overset{i}\hookrightarrow
G(k',\tilde{V}')$ where $i$ is the tautological embedding.

The morphism $F$ is a linear embedding, hence, by Corollary \ref{3.8}, we may assume
without loss of generality that

(a) $F$ is a standard extension,\\
or

(b) $F$ factors through an embedding into a projective space.

Consider these two cases.

(a) By Remark \ref{intrinsic def2},
$\phi$ is given by a triple $(W_{\phi},U_{\phi},\underline{\phi})$ where $W_{\phi}\subset U_{\phi}$ is a flag in
$\tilde{V}'$. Furthermore, without loss of generality we may assume that $F$ is given by a triple
$(W_f,U_f,\underline{F})$ for a flag $W_f\subset U_f$ in $\tilde{V}'$. Since $\phi(GS)=f\circ g(GS)\subset
F(\tilde{G})$, the following chain of inclusions holds:
$$
W_F\subset W_{\phi}\subset U_{\phi}\subset U_F\subset \tilde{V}'.
$$
Therefore we have an embedding $U_{\phi}/W_{\phi}\hookrightarrow U_F/W_{\phi}$ and a projection
$U_F/W_F\twoheadrightarrow U_F/W_{\phi}$. However, since $\phi$ is a standard extension, the fixed symplectic form
$\tilde{\Phi}'$ on $\tilde{V}'$ induces a nondegenerate form on $U_{\phi}/W_{\phi}$, while it induces the zero form on
$U_F/W_F$ as $f(\tilde{G})\subset\widetilde{GS}$.
This contradiction shows that the case (a) is impossible.

(b) By assumption, $F:\tilde{G}\overset{f}\hookrightarrow\widetilde{GS}\overset{i}\hookrightarrow G(k',\tilde{V}')$
decomposes as
$\tilde{G}\hookrightarrow\mathbb{P}^r\hookrightarrow G(k',\tilde{V}')$. Without loss of generality we assume that
$\mathbb{P}^r$ is a maximal projective space on $G(k',\tilde{V}')$, and consider the two possible cases:
$\mathbb{P}^r=\{V_{k'}\subset\tilde{V}'|V_{k'-1}\subset V_{k'}\subset V^\bot_{k'-1}\}$ and
$\mathbb{P}^r=\{V_{k'}\subset\tilde{V}'$
$|V_{k'}\subset V_{k'+1}\}$
for some fixed subspaces $V_{k'-1}$ and $V_{k'+1}$ of $\tilde{V}'$, $V_{k'-1}$ being isotropic.

In the former case any
$V_{k'}\in G(k',\tilde{V}')$ such that $V_{k'-1}\subset V_{k'}\subset V_{k'-1}^\bot$
is isotropic, i.e. $V_{k'}\in\widetilde{GS}\cap\mathbb{P}^r$. In other words,
$$
\widetilde{GS}\cap\mathbb{P}^r=\mathbb{P}(V_{k'-1}^\bot/V_{k'-1}),
$$
where the intersection is taken in $G(k',\tilde{V}')$. This means that $\phi$ factors through
a projective subspace of $\widetilde{GS}$, which contradicts Remark \ref{prop of st ext}.
Hence, the former case is impossible.

In the latter case it is easy to check that, for $n'_{j_{s+1}}=\dim\tilde{V}'>2$, the subspace
$V_{k'+1}\subset \tilde{V}'$ is necessarily isotropic. Then $\widetilde{GS}\cap\mathbb{P}^r=\mathbb{P}((V_{k'+1})^*)$,
and we are led to a contradiction as in the former case.

(ii) The proof is analogous to the proof of (i) and we leave it to the reader.
\end{proof}

\begin{lemma}\label{point, line}
Let $1\le k<n=[\dim V/2]$ and $\phi:GO(k,V)\to GO(k',V'),\ V_k\mapsto V_k\oplus W$, be a standard extension.
Let two maximal projective spaces $\mathbb{P}^k_{\alpha}$ and $\mathbb{P}^{n-k}_{\beta}$ intersect in a point.
Then there exist maximal projective spaces $\mathbb{P}^{k'}_{\alpha}$ and
$\mathbb{P}^{n'-k'}_{\beta},\ n'=[\dim V'/2]$, on
$GO(k',V')$ such that $\phi(\mathbb{P}^k_{\alpha})\subset\mathbb{P}^{k'}_{\alpha}$,
$\phi(\mathbb{P}^{n-k}_{\beta})\subset\mathbb{P}^{n'-k'}_{\beta}$, and
$\mathbb{P}^{k'}_{\alpha}\cap\mathbb{P}^{n'-k'}_{\beta}$ is a point.
\end{lemma}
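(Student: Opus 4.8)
The plan is to make the two projective spaces and their images completely explicit, and then invoke the point-intersection criterion of Lemma~\ref{two possib},(iv) twice: once in $V$ to encode the hypothesis, and once in $V'$ to certify the image configuration. First I would unwind the hypothesis: by Lemma~\ref{two possib},(iv) the equality $\mathbb{P}^k_{\alpha}\cap\mathbb{P}^{n-k}_{\beta}=\{V_k\}$ means $\mathbb{P}^k_{\alpha}=\mathbb{P}(V_{k+1}^*)$ and $\mathbb{P}^{n-k}_{\beta}=\mathbb{P}(V_n/V_{k-1})$ for isotropic subspaces $V_{k-1},V_{k+1},V_n$ of $V$ with $V_{k-1}\subset V_k\subset V_n$ and $V_{k+1}\cap V_n=V_k$. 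Since $\phi$ is a standard extension of orthogonal Grassmannians, it is attached (Definition~\ref{lin mor isotr}) to an orthogonal isomorphism $V'\simeq V\oplus\hat{W}$ with an isotropic subspace $W\subset\hat{W}$ of dimension $k'-k$, acting by $V_k\mapsto V_k\oplus W$; as $\Phi'=\Phi\oplus\hat{\Phi}$, the subspaces $V$ and $\hat{W}$ are mutually orthogonal in $V'$.

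Next I would write down the three candidate subspaces in $V'$, namely $V'_{k'+1}:=V_{k+1}\oplus W$, $V'_{k'-1}:=V_{k-1}\oplus W$, and any maximal isotropic subspace $V'_{n'}$ of $V'$ containing $V_n\oplus W$ (such $V'_{n'}$ exists because $V_n$ and $W$ are isotropic and orthogonal, so $V_n\oplus W$ is isotropic and extends to a maximal isotropic subspace). The same orthogonality argument shows all three are isotropic, with $\dim V'_{k'+1}=k'+1$ and $\dim V'_{k'-1}=k'-1$. Setting $\mathbb{P}^{k'}_{\alpha}:=\mathbb{P}((V'_{k'+1})^*)$ and $\mathbb{P}^{n'-k'}_{\beta}:=\mathbb{P}(V'_{n'}/V'_{k'-1})$, the inclusions $\phi(\mathbb{P}^k_{\alpha})\subset\mathbb{P}^{k'}_{\alpha}$ and $\phi(\mathbb{P}^{n-k}_{\beta})\subset\mathbb{P}^{n'-k'}_{\beta}$ are immediate from $V_k\mapsto V_k\oplus W$ together with $V_k\subset V_{k+1}$, respectively $V_{k-1}\subset V_k\subset V_n$. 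To guarantee these are maximal spaces of the asserted types, so that Lemma~\ref{two possib},(iv) applies on the target, I need $k'\le n'-1$; this follows from the standard-extension inequalities of Remark~\ref{prop of st ext1} together with the hypothesis $k<n$.

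Finally I would verify that $\mathbb{P}^{k'}_{\alpha}\cap\mathbb{P}^{n'-k'}_{\beta}$ is a single point by checking the conditions of Lemma~\ref{two possib},(iv) for the triple $(V'_{k'+1},V'_{k'-1},V'_{n'})$, with candidate intersection point $V'_{k'}:=V_k\oplus W=\phi(V_k)$. The containments $V'_{k'-1}\subset V'_{k'}\subset V'_{n'}$ are clear, so the crux is the transversality relation $V'_{k'+1}\cap V'_{n'}=V'_{k'}$, and this is the step I expect to be the main obstacle. I would dispatch it as follows: since $W\subset V_n\oplus W\subset V'_{n'}$, any element of $(V_{k+1}\oplus W)\cap V'_{n'}$ has the form $v+w$ with $v\in V_{k+1}$, $w\in W$, whence $v=(v+w)-w\in V'_{n'}\cap V$. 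The key point is that $V'_{n'}\cap V$ is a $\Phi$-isotropic subspace of $V$ containing the maximal isotropic subspace $V_n$, and hence equals $V_n$; therefore $v\in V_{k+1}\cap V_n=V_k$ and $v+w\in V_k\oplus W=V'_{k'}$. As the reverse inclusion is obvious, $V'_{k'+1}\cap V'_{n'}=V'_{k'}$, and Lemma~\ref{two possib},(iv) then gives that the intersection on $GO(k',V')$ is the single point $V'_{k'}$, completing the proof.
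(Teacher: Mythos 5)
Your proof is correct and follows essentially the same route as the paper's: both apply Lemma \ref{two possib},(iv) to extract the configuration $V_{k-1},V_{k+1},V_n$ and then transport it to $V'$ via $U\mapsto U\oplus W$ to produce the target configuration. You are in fact more careful than the paper's two-line proof, which tacitly treats $V_n\oplus W$ as the maximal isotropic member of the configuration in $V'$; your step of enlarging it to a genuine maximal isotropic subspace $V'_{n'}$ and then checking $V'_{n'}\cap V=V_n$ to preserve the transversality $V'_{k'+1}\cap V'_{n'}=V'_{k'}$ closes exactly that small gap.
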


\begin{proof}
The projective spaces $\mathbb{P}^k_{\alpha}$ and $\mathbb{P}^{n-k}_{\beta}$ determine a configuration
$V_{k-1},V_{k+1},V_n$ as in Lemma \ref{two possib},(iv). The subspaces $V_{k-1}\oplus W,V_{k+1}\oplus W,V_n\oplus W$
of $V'$ form the configuration which determines the desired projective spaces $\mathbb{P}^{k'}_{\alpha}$ and
$\mathbb{P}^{n'-k'}_{\beta}$.
\end{proof}

\begin{lemma}\label{non-isom3}

(i) $\mathbf{G}\mathrm{O}(k,\infty)\not\simeq\mathbf{G}\mathrm{S}(k',\infty)$ for $k,k'\in\mathbb{Z}_{+}\cup\{\infty\}$.

(ii) $\mathbf{G}\mathrm{O}(k,\infty)\not\simeq\mathbf{G}\mathrm{S}(\infty,k')$ for $k\in\mathbb{Z}_{+}\cup\{\infty\},
k'\in\mathbb{N}\cup\{\infty\}$.

(iii) $\mathbf{G}\mathrm{O}(\infty,k)\not\simeq\mathbf{G}\mathrm{S}(k',\infty)$ for
$k\in\mathbb{N}\cup\{\infty\},k'\in\mathbb{Z}_{+}\cup\{\infty\}$.

(iv) $\mathbf{G}\mathrm{O}(\infty,k)\not\simeq\mathbf{G}\mathrm{S}(\infty,k')$ for $k,k'\in\mathbb{N}\cup\{\infty\}$.
\end{lemma}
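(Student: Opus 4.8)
The guiding idea is that any isomorphism of ind-varieties preserves the ample generator $\mathcal{O}(1)$ of the Picard group, hence carries projective spaces to projective spaces and linearly embedded quadrics to quadrics, together with their dimensions, their mutual incidences, and the property of lying in no projective space. So I would separate orthogonal from symplectic ind-Grassmannians by an intrinsic feature of this linear-subvariety geometry. The decisive difference comes from the isotropic cone: on $GO(k,V)$ the set $\{V_k\supset V_{k-1}\}$ of isotropic subspaces through a fixed isotropic flag is a quadric $GO(1,V_{k-1}^\bot/V_{k-1})$ (Lemma \ref{Bkn}(iv)), whereas on $GS(k,V)$ the same set is a projective space $\mathbb{P}(V_{k-1}^\bot/V_{k-1})$ (Lemma \ref{Bkn sympl}(ii)), because every line is isotropic for an alternating form. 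The same dichotomy appears in incidences: two maximal projective spaces of different families meet on $GO(k,V)$ in a single point (Lemma \ref{two possib}(iv)), but on $GS(k,V)$ only in the empty set or a projective line (Lemma \ref{intersect dim=1}(ii)).

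The principal invariant I would use is, for $d\ge2$, whether the ind-variety contains a $d$-dimensional quadric lying in no projective space. By Lemma \ref{Bkn}(iv)--(vi), every $GO(k,V)$ with $k<[\dim V/2]-1$ carries such quadrics — the families $Q^{\dim V-2k}_\beta$ and, for $k\le[\dim V/2]-2$, $Q^4_\gamma$ — in both parities of $\dim V$; along the defining chain of standard extensions these persist (Remark \ref{prop of st ext}), so every $\mathbf{G}\mathrm{O}(k,\infty)$ and every $\mathbf{G}\mathrm{O}(\infty,k)$ with $k\ge1$ contains quadrics of dimension $\ge2$ lying in no projective space. On the symplectic side I would prove the analogue of Lemma \ref{Bkn}(vi): for $k<n$ every maximal quadric on $GS(k,V)$ lies in a projective space, the quadratic-cone analysis of that proof yielding, in the alternating setting, projective spaces in place of quadrics. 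Since all levels of $\mathbf{G}\mathrm{S}(k',\infty)$ and of $\mathbf{G}\mathrm{S}(\infty,k')$ with $k'\ge1$ are non-maximal isotropic, these symplectic ind-Grassmannians contain no quadric of dimension $\ge2$ outside a projective space. Transporting this invariant across a hypothetical isomorphism settles every case of (i)--(iv) in which the orthogonal side is not the ind-spinor variety $\mathbf{G}\mathrm{O}^1(\infty,0)$ and the symplectic side is not the ind-Lagrangian Grassmannian $\mathbf{G}\mathrm{S}(\infty,0)$.

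Two families of borderline comparisons remain. When the symplectic side is $\mathbf{G}\mathrm{S}(\infty,0)=\underset{\to}\lim GS(n,2n)$ the quadric test is useless, since its maximal isotropic levels contain quadrics (already $GS(2,4)\cong Q^3$); here I would use the existence of a projective plane instead. On $GS(n,2n)$ every maximal projective space is a line (Lemma \ref{Bkn sympl}(iii)), and standard extensions send lines to lines, so $\mathbf{G}\mathrm{S}(\infty,0)$ contains no $\mathbb{P}^2$, whereas every orthogonal ind-Grassmannian does. When instead the orthogonal side is $\mathbf{G}\mathrm{O}^1(\infty,0)=\underset{\to}\lim GO(n,2n+1)$, whose only maximal quadrics are conics, I would use the point-versus-line incidence of projective spaces: $GO(n,2n+1)$ has a single family of maximal projective spaces (of dimension $n$), so by Lemma \ref{two possib}(i) no two of them meet in a line, while on the symplectic ind-Grassmannians carrying two projective families cross-family spaces do meet in a line (Lemma \ref{intersect dim=1}(ii),(iii)); the degenerate value $\mathbf{G}\mathrm{S}(1,\infty)=\mathbf{P}^\infty$, being itself a projective space, contains no conic outside a projective space, unlike the spinor variety, and the one remaining pair $\mathbf{G}\mathrm{O}^1(\infty,0)$ versus $\mathbf{G}\mathrm{S}(\infty,0)$ is separated once more by a $\mathbb{P}^2$, since the spinor levels carry projective spaces of dimension $n\to\infty$.

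The main obstacle is twofold. First, one must establish the symplectic counterpart of Lemma \ref{Bkn}(vi), namely that for $k<n$ there are no maximal quadrics outside projective spaces, which underlies the principal invariant. Second, and more delicate, one must make "maximal projective space", its dimension, and the family to which it belongs into well-defined, isomorphism-stable data of the direct limit: a projective space of the ind-variety lives at a finite level, but its maximality and family-membership must be controlled under every embedding of the chain. Lemma \ref{point, line} is precisely the device that propagates the point-incidence of an $\alpha$- and a $\beta$-space through orthogonal standard extensions, and an analogous bookkeeping on the symplectic side propagates the line-incidence; verifying these persistence statements, together with the exceptional coincidence $GO(2,5)\cong GS(2,4)\cong Q^3$ at the bottom of the spinor and Lagrangian chains, is where the real work lies.
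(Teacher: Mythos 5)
Your principal invariant rests on a claim that is false. You propose to prove, as the symplectic analogue of Lemma \ref{Bkn}(vi), that for $k<n$ every maximal quadric on $GS(k,V)$ lies in a projective space. This fails for $2\le k\le n-2$: the $\gamma$-type quadrics exist in the symplectic case exactly as in the orthogonal one. Indeed, take an isotropic flag $V_{k-2}\subset V_{k+2}$ in $V$ and consider
$$
Q=\{V_k\in GS(k,V)\ |\ V_{k-2}\subset V_k\subset V_{k+2}\}.
$$
Since every subspace of the isotropic space $V_{k+2}$ is itself isotropic, $Q$ coincides with the full Grassmannian $G(2,V_{k+2}/V_{k-2})\cong G(2,4)$, which is a smooth $4$-dimensional quadric; it is linearly embedded in $GS(k,V)$ because $\mathcal{O}_{GS(k,V)}(1)=i^*\mathcal{O}_{G(k,V)}(1)$ restricts on it to the ample generator. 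Moreover $Q$ lies in no projective space on $GS(k,V)$: the intersection of its points is $V_{k-2}$ (of dimension $k-2$, not $k-1$) and their span is $V_{k+2}$ (of dimension $k+2$, not $k+1$), so $Q$ is contained in no space of the form (\ref{PkS}) or (\ref{Z(V_k)}), and by Lemma \ref{Bkn sympl} every projective space on $GS(k,V)$ lies in one of these. The dichotomy you invoke (``every line is isotropic for an alternating form'') governs only the $\beta$-family, turning the cone $GO(1,V_{k-1}^\bot/V_{k-1})$ into $\mathbb{P}(V_{k-1}^\bot/V_{k-1})$; it has no effect on the $\gamma$-family, which arises from the fact that $G(2,4)$ is a quadric independently of any form. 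Consequently $\mathbf{G}\mathrm{S}(k',\infty)$ for $k'\ge2$, $\mathbf{G}\mathrm{S}(\infty,k')$ for $k'\ge2$, and $\mathbf{G}\mathrm{S}(\infty,\infty)$ all contain quadrics of dimension $4$ lying in no projective space, and these persist under standard extensions; your test ``contains a $d$-dimensional quadric, $d\ge2$, in no projective space'' therefore does not separate the two sides, and the main body of your argument for (i)--(iv) collapses. (A repaired quadric invariant, e.g.\ quadrics of unbounded dimension outside projective spaces, would still fail for pairs such as $\mathbf{G}\mathrm{O}^0(\infty,2)$ versus a symplectic ind-Grassmannian, since there the orthogonal $\beta$-quadrics also have dimension $4$.)

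The viable route is the one you relegate to the borderline spinor case, and it is in fact the paper's proof: the intersection pattern of maximal projective spaces of the two families is the intrinsic invariant. Orthogonally, an $\alpha$-space and a $\beta$-space can meet in a single point (Lemma \ref{two possib},(iv)), and Lemma \ref{point, line} propagates such a point-intersection pair through standard extensions, producing on $\mathbf{G}\mathrm{O}(\infty,\infty)$ two maximal infinite projective spaces $\mathbf{P}^\infty_\alpha$, $\mathbf{P}^\infty_\beta$ meeting in a point; symplectically, cross-family intersections are always empty or a projective line (Lemma \ref{intersect dim=1},(ii),(iii)). The genuinely delicate work --- which you correctly identify but do not carry out --- is the labeling machinery: one must show that the assignment of a projective space of the ind-variety to the family $\alpha$ or $\beta$, and the notion of maximality, are independent of the defining chain and hence isomorphism-stable; the paper does this using Lemma \ref{two possib},(i),(iii), Lemma \ref{intersect dim=1},(i),(ii), Theorem 1 and Lemma \ref{non-isom2}. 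Your auxiliary arguments for the exceptional pairs (absence of a $\mathbb{P}^2$ on $\mathbf{G}\mathrm{S}(\infty,0)$, point-versus-line incidence against $\mathbf{G}\mathrm{O}^1(\infty,0)$) are sound and consistent with this scheme; what is missing is that this scheme, not the quadric test, must carry all of (i)--(iv).
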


\begin{proof} We consider in detail only the case of  $\mathbf{G}\mathrm{O}(\infty,\infty)$ and
$\mathbf{G}\mathrm{S}(\infty,\infty)$.
Let $\mathbb{P}^q$ for $q\ge2$ be a projective space on $\mathbf{G}\mathrm{O}(\infty,\infty)$ (respectively,
$\mathbf{G}\mathrm{S}(\infty,\infty)$). We now explain how to label $\mathbb{P}^q$ as $\mathbb{P}^q_{\alpha}$ or
$\mathbb{P}^q_{\beta}$. Fix an arbitrary chain of standard extensions
\begin{equation}\label{exh1}
GO(k_1,V_{n_1})\hookrightarrow GO(k_2,V_{n_2})\hookrightarrow...\hookrightarrow GO(k_m,V_{n_m})\hookrightarrow
GO(k_{m+1},V_{n_{m+1}})\hookrightarrow...
\end{equation}
(respectively,
\begin{equation}\label{exh2}
GS(k'_1,V_{n'_1})\hookrightarrow GS(k'_2,V_{n'_2})\hookrightarrow...\hookrightarrow GS(k'_m,V_{n'_m})\hookrightarrow
GS(k'_{m+1},V_{n'_{m+1}})\hookrightarrow...)
\end{equation}
such that
$$
\underset{m\to\infty}\lim k_m=\underset{m\to\infty}\lim(n_m-k_m)=\infty
$$
(respectively,
$$
\underset{m\to\infty}\lim k'_m=\underset{m\to\infty}\lim(n'_m-k'_m)=\infty)
$$
and
$\underset{\to}\lim GO(k_m,V_{n_m})=\mathbf{G}\textrm{O}(\infty,\infty)$ (respectively,
$\underset{\to}\lim GS(k'_m,V_{n'_m})=\mathbf{G}\textrm{S}(\infty,\infty)$). Without loss of generality we assume
that all $n_m$ in (\ref{exh1}) are odd.

Consider some $n_m$ such that $\mathbb{P}^q\subset GO(k_m,V_{n_m})$ (respectively, $\mathbb{P}^q\subset
GS(k'_m,V_{n'_m})$) and choose a maximal projective space $\mathbb{P}^r$ on $GO(k_m,V_{n_m})$ (respectively,
$GS(k'_m,V_{n'_m}$) such that $\mathbb{P}^q\subset\mathbb{P}^r$. The projective space $\mathbb{P}^r$ is either of type
$\mathbb{P}^r_{\alpha}$ or $\mathbb{P}^r_{\beta}$,  and we label $\mathbb{P}^q$ according to the label of
$\mathbb{P}^r$.
Lemma \ref{two possib},(i),(iii) (respectively, Lemma \ref{intersect dim=1},(i),(ii)) implies that this labeling is well
defined as long as the chain (\ref{exh1}) (respectively,  (\ref{exh2})) is fixed. Moreover, using Theorem 1 and
Lemma \ref{non-isom2} one can verify that the labelings $\mathbb{P}^q_{\alpha}$ and $\mathbb{P}^q_{\beta}$ are intrinsic
to the ind-variety $\mathbf{G}\mathrm{O}(\infty,\infty)$ (respectively, $\mathbf{G}\mathrm{S}(\infty,\infty)$), i.e. do
not depend on the
choice of chain (\ref{exh1}) (respectively,  (\ref{exh2})) satisfying the above conditions.

Let now $\mathbf{P}^{\infty}\hookrightarrow\mathbf{G}\mathrm{O}(\infty,\infty)$ (respectively,
$\mathbf{P}^{\infty}\hookrightarrow\mathbf{G}\mathrm{S}(\infty,\infty)$) be a linear embedding. We call its image an
\textit{infinite projective space $\mathbf{P}^{\infty}$ on} $\mathbf{G}\mathrm{O}(\infty,\infty)$ (respectively,
$\mathbf{G}\mathrm{S}(\infty,\infty)$). We say that $\mathbf{P}^{\infty}=\mathbf{P}^{\infty}_{\alpha}$ if
$\mathbf{P}^{\infty}=\underset{\to}\lim\mathbb{P}^q_{\alpha}$ for some projective spaces $\mathbb{P}^q_{\alpha}$ on
$\mathbf{G}\mathrm{O}(\infty,\infty)$ (respectively, $\mathbf{G}\mathrm{S}(\infty,\infty)$). In a similar way we define
$\mathbf{P}^{\infty}_{\beta}$ on $\mathbf{G}\mathrm{O}(\infty,\infty)$ (respectively,
$\mathbf{G}\mathrm{S}(\infty,\infty)$).

Next, we observe that Lemma \ref{point, line} implies that on $\mathbf{G}\mathrm{O}(\infty,\infty)$ there are pairs of
maximal infinite projective spaces $\mathbf{P}^{\infty}_{\alpha}$ and $\mathbf{P}^{\infty}_{\beta}$ such that
$\mathbf{P}^{\infty}_{\alpha}\cap\mathbf{P}^{\infty}_{\beta}$ is a point.

To complete the proof, we observe that on $\mathbf{G}\mathrm{S}(\infty,\infty)$ any two maximal infinite projective
spaces $\mathbf{P}^{\infty}_{\alpha}$ and $\mathbf{P}^{\infty}_{\beta}$ intersect in a projective line whenever their
intersection is non-empty. This follows from Lemma \ref{intersect dim=1}. More precisely, an infinite projective space
$\mathbf{P}^{\infty}_{\alpha}$ (respectively, $\mathbf{P}^{\infty}_{\beta}$) is maximal on $\mathbf{G}S(\infty,\infty)$
if and only if, for any chain (\ref{exh2}) the intersections
$\mathbf{P}^{\infty}_{\alpha}\cap GS(k'_m,V_{n'_mj})$ are maximal projective spaces in $GS(k'_m,V_{n'_m})$ for large
enough $m$. This is a consequence of Lemma \ref{two possib},(i). Now Lemma \ref{two possib},(iii) implies the assertion
that maximal projective spaces $\mathbf{P}^{\infty}_{\alpha}$ and $\mathbf{P}^{\infty}_{\beta}$ intersect in a
projective line whenever their intersection is non-empty.

Since the intersection properties of maximal infinite projective spaces $\mathbf{P}^{\infty}_{\alpha}$ and
$\mathbf{P}^{\infty}_{\beta}$ on $\mathbf{G}\mathrm{O}(\infty,\infty)$ and $\mathbf{G}\mathrm{S}(\infty,\infty)$
are intrinsic to the geometry of $\mathbf{G}\mathrm{O}(\infty,\infty)$ and $\mathbf{G}\mathrm{S}(\infty,\infty)$, we
conclude that $\mathbf{G}\mathrm{O}(\infty,\infty)$ and $\mathbf{G}\mathrm{S}(\infty,\infty)$ are non-isomorphic
ind-varieties.

The arguments in all other cases are similar. One either shows that on one of the ind-varieties in question there are
maximal projective spaces which do not exist on the other, or shows that the intersection properties of maximal
projective spaces are different on both ind-varieties. For instance, on $\mathbf{G}\mathrm{O}(k,\infty)$ there are
maximal projective spaces $\mathbb{P}^k_{\alpha}$ and $\mathbf{P}^{\infty}_{\beta}$ which intersect in a point, while on
$\mathbf{G}\mathrm{S}(k,\infty)$ two maximal projective spaces  $\mathbb{P}^k_{\alpha}$ and
$\mathbf{P}^{\infty}_{\beta}$ intersect in a projective line or do not intersect at all. We leave the details to the
reader.

\end{proof}

\vspace{1cm}

\end{document}